\newtheoremstyle{dotless}{}{}{\itshape}{}{\bfseries}{}{\newline}{}
\theoremstyle{dotless}
\newtheorem{them}{Theorem}[section]	
\newtheorem{kor}[them]{Corollary}
\newtheorem{defi}[them]{Definition}
\newtheorem{lem}[them]{Lemma}
\newtheorem{remark}[them]{Remark}
\newtheorem{bsp}[them]{Example}
\renewenvironment{proof}{{\scshape Proof: \hspace{0.01 cm}}}{\qed}
\newcommand{\beq}{\begin{equation}}
\newcommand{\eeq}{\end{equation}}
\newcommand{\R}{\mathbb{R}}
\newcommand{\N}{\mathbb{N}}
\newcommand{\C}{\mathbb{C}}
\def\cH{{\mathcal H}}
\def\cE{{\mathcal E}}
\def\cD{{\mathcal D}}
\def\cG{{\mathcal G}}
\newcommand{\cW}{\mathcal{W}}
\newcommand{\eps}{\varepsilon}
\newcommand{\Uans}{\bs{U}_{\rm ans}}
\newcommand{\Uext}{\bs{U}_{\rm ext}}
\newcommand{\Umod}{\bs{U}_{\rm mod}}
\newcommand{\Uanshat}{\widehat{\bs{U}}_{\rm ans}}
\newcommand{\Uexthat}{\widehat{\bs{U}}_{\rm ext}}
\newcommand{\Umodhat}{\widehat{\bs{U}}_{\rm mod}}
\newcommand{\Uansone}{U_{{\rm ans},1}}
\newcommand{\Uanstwo}{U_{{\rm ans},2}}
\newcommand{\Uansthree}{U_{{\rm ans},3}}
\newcommand{\Uextone}{U_{{\rm ext},1}}
\newcommand{\Uexttwo}{U_{{\rm ext},2}}
\newcommand{\Ahat}{\widehat{A}}
\newcommand{\Abarhat}{\widehat{\overline{A}}}
\newcommand{\Ktil}{\widetilde{K}}
\newcommand{\Reshat}{\widehat{\bs{\operatorname{Res}}}}
\newcommand{\Res}{\bs{\operatorname{Res}}}
\newcommand{\Trho}{T_{\rho_0,\eps_0}}
\newcommand{\Jr}{J_{\rho_0,\eps_0}}
\newcommand{\ie}{\mathrm{i}}
\newcommand{\e}{\mathrm{e}}
\newcommand{\norm}[1]{\left\lVert #1 \right\rVert}
\newcommand{\sgn}[1]{\operatorname{sgn} \left( #1 \right)}
\newcommand{\bs}[1]{\boldsymbol{#1}}
\newcommand{\cc}{\mathrm{c.c.}}
\newcommand{\mv}{:}
\newcommand{\dist}{\operatorname{dist}}
\newcommand{\pa}{\partial}
\DeclareMathOperator\Hdiv{H_{div}}
\DeclareMathOperator\Hcurl{H_{curl}}
\newcommand\Label[1]{&\refstepcounter{equation}\left(\theequation\right)\ltx@label{#1}&}
\newcommand\widewidehat[1]{\arraycolsep=0pt\relax%
\begin{array}{c}
\stretchto{
\scaleto{
\scalerel*[\widthof{\ensuremath{#1}}]{\kern-.5pt\bigwedge\kern-.5pt}
{\rule[-\textheight/2]{1ex}{\textheight}} %WIDTH-LIMITED BIG WEDGE
}{\textheight} % 
}{0.5ex}\\			% THIS SQUEEZES THE WEDGE TO 0.5ex HEIGHT
#1\\				% THIS STACKS THE WEDGE ATOP THE ARGUMENT
\rule{-1ex}{0ex}
\end{array}
}
\begin{document}
	
\author{Tom\'{a}\v{s} Dohnal \\
Institute of Mathematics, Martin Luther University Halle-Wittenberg,\\
06099 Halle (Saale), Germany. Email: tomas.dohnal@mathematik.uni-halle.de \and
Roland Schnaubelt
\\Department of Mathematics,
Karlsruhe Institute of Technology, \\ 76128 Karlsruhe, Germany. Email: schnaubelt@kit.edu \and
Daniel P. Tietz \\
Analysis and PDE Unit, Okinawa Institute of Science and Technology, \\
904-0495 Onna, Japan. Email: daniel.tietz@oist.jp}
\title{Rigorous Envelope Approximation for Interface Wave-Packets in Maxwell's Equations with 2D Localization}
\date{\today}
\maketitle

\begin{abstract}
We study transverse magnetic (vector valued) wave-packets in the time dependent Kerr nonlinear Maxwell's equations at the interface of two inhomogeneous dielectrics with an instantaneous material response. The resulting model is quasilinear. The problem is solved on each side of the interface and the fields are coupled via natural interface conditions. The wave-packet is localized at the interface and propagates in the tangential direction. For a slowly modulated envelope approximation the nonlinear Schr\"odinger equation is formally derived as an amplitude equation for the envelope. We rigorously justify the approximation in a Sobolev space norm on the corresponding asymptotically large time intervals. The well-posedness result for the quasilinear Maxwell problem builds on the local theory of [R. Schnaubelt and M. Spitz, Local wellposedness of quasilinear Maxwell equations with conservative interface conditions, \textit{Commun. Math. Sci.}, 2022] and extends this to asymptotically large time intervals for small data using an involved bootstrapping argument.
\end{abstract}

{\bf 2020 MSC:} 35Q61, 35C07, 35L50.

{\bf Key words:} Maxwell's equations, Kerr nonlinearity, quasilinear, interface, envelope approximation, traveling pulse.

\section{Introduction}

Propagation of electromagnetic wave-packets at interfaces is of interest for applications in modern and future optical components. A typical example is surface plasmons (SPs). These electromagnetic waves propagate  at the interface of a conductor and a dielectric, are strongly localized perpendicular to the interface and are closely linked to nonlinear optical effects, see e.g. \cite{maier2007plasmonics}. From an engineering point of view their advantage is mainly in the higher level of localization compared to wave-packets in bulk media. If the involved media feature a nonlinear material response, new phenomena and thus new functionality of SPs are produced \cite{Kauranen2012}. Also interfaces of two dielectrics, e.g. photonic crystal waveguides \cite{JAMOIS20031,Kivshar:02,PRL-2008}, are interesting from the applied point of view.

We consider wave-packets at the interface of two generally inhomogeneous and Kerr nonlinear dielectrics. The problem is modeled by time dependent quasilinear Maxwell's equations in two spatial dimensions (assuming homogeneity of the material in the third direction). We study spatio-temporal wave-packets propagating in the direction tangential to the interface. They are broad in the propagation direction, have a small amplitude and are slowly modulated in time. In particular, we analyze their approximation via a slowly varying envelope. The equation governing the envelope dynamics is the one dimensional nonlinear Schr\"odinger equation (NLS).

The NLS is well known to approximate the dynamics of wave-packets in dispersive problems with a single carrier wave \cite{Kalyakin1989,KSM1992}. The formal derivation of the NLS for spatio-temporal wave-packets in quasilinear Maxwell's equations exists for a number of scenarios ranging from pulses in optical fibers \cite{agrawal2013} over photonic crystals \cite{BF05} to surface plasmons, e.g. \cite{Li:89, Davydova:15}. Spatial (time harmonic) surface plasmon wave-packets have been also formally approximated by the NLS (or more generally by the complex Ginzburg-Landau equation), e.g. in \cite{Davoyan:09,MS2010,CO11}, where one of the spatial variables plays the role of an evolution variable.

Formally derived asymptotic models for wave-packet envelopes can fail to produce a valid approximation of the original nonlinear problem, see e.g. \cite{Schneider-95,SSZ15}. Hence, a rigorous justification analysis with an error estimate must be performed. In the case of the semilinear wave equation with periodic coefficients this justification was carried out in \cite{BSTU06} for the one dimensional case and in \cite{DR20} in $d$ dimensions. The time dependent Maxwell's equations for nonlinear materials are quasilinear and the NLS approximation has been justified only in the case of fields leading to a scalar equation. Examples are \cite{LesSchnei2012} and \cite{SU03}. In \cite{LesSchnei2012} wave-packets in 2D photonic crystal waveguides are described by a quasilinear wave equation. In \cite{SU03} an approximation via a complex Ginzburg-Landau equation is proved for the quasilinear wave equation in one spatial dimension and with a time delayed material response (memory effect modeled by the coupling to an ODE system). In \cite{dull2018existence} the authors study a quadratic quasilinear dispersive equation allowing for resonances. These play no role in our analysis, which focuses on difficulties arising from the interface condition and the system character.

We work with vector valued Maxwell's equations for transverse magnetic (TM) polarized fields and reduce the problem to a system for the components $E_1, E_2$ and $H_3$. We restrict here to the instantaneous material response (as relevant for dielectrics) and avoid time delayed terms (relevant for metals). To our knowledge there are no directly applicable results on long time existence in full quasilinear Maxwell's equations with time delay on unbounded domains. For the instantaneous case we use local well-posedness results from \cite{schnaubelt2018local}. The working function space for each component is $\mathcal{G}^3(\R^2\times J):=\bigcap_{j=0}^3 C^j(\overline{J},\mathcal{H}^{3-j}(\R^2))$, where $\mathcal{H}^{s}(\R^2)$ consists of functions which are $H^s$ on each half-space defined by the interface and $J$ is a time interval. This high regularity is enforced by the quasilinear term. The two half-spaces are coupled by interface conditions out of which only the continuity of $E_2$ and $H_3$ needs to be enforced in the time evolution. As the approximation result needs to be proved on asymptotically large time intervals, we use a sophisticated bootstrap argument to extend the local existence to such asymptotically long time scales for small data. Here one estimates space-time differentiated solutions of the problem. If no normal derivatives occur, we can apply higher-order energy inequalities. Otherwise, the boundary conditions are violated and we have to use the equation itself and the divergence condition iteratively to bound the differentiated solutions in Gronwall arguments, see Section~\ref{S:bootstrap}.

Maxwell's equations in the whole space in the absence of free currents read
\begin{equation}
	\left\{
	\begin{aligned}
		\partial_t \bs{\mathcal{D}} &= \nabla \times \bs{\mathcal{H}}, \\
		\upmu_0 \partial_t \bs{\mathcal{H}} &= -\nabla \times \bs{\mathcal{E}}, \\
		\nabla \cdot \bs{\mathcal{D}} &= \varrho_0, \\
		\nabla \cdot \bs{\mathcal{H}} &= 0, 
	\end{aligned} \right. 
	\label{E:Maxw}
\end{equation}
for $\bs{x} \in \R^3$ and $t>0$, where $\bs{\mathcal{H}} = \bs{\mathcal{H}}\left(\bs{x},t\right)$ is the 
magnetic field, $\bs{\mathcal{E}} = \bs{\mathcal{E}}\left(\bs{x},t\right)$ is the electric field, $\varrho_0 = \varrho_0(\bs{x})$ is the volume charge density and
$\upmu_0$ is the permeability of free space, see e.g. \cite{feynman1979feynman}. We consider an electric displacement field $\bs{\mathcal{D}} = \bs{\mathcal{D}}(\bs{\mathcal{E}})$ 
given by the instantaneous material law
\begin{equation}
	\bs{\mathcal{D}}\left(\bs{x},t\right) = \upepsilon_0 \bs{\mathcal{E}}\left(\bs{x},t\right) + \bs{\mathcal{P}}\left(\bs{x},t\right).
	\label{E:D}
\end{equation}
Here $\upepsilon_0$ is the permittivity of free space and $\bs{\mathcal{P}} = \bs{\mathcal{P}}(\bs{\mathcal{E}})$ is the electric polarization modeling 
an $x_1$-dependent Kerr nonlinear material, i.e.,
\begin{equation}
	\bs{\mathcal{P}}\left(\bs{x},t\right) = \upepsilon_0 \left( \chi_1\left(x_1\right) \bs{\mathcal{E}}\left(\bs{x},t\right) + \chi_3\left(x_1\right) \left(\bs{\mathcal{E}}\left(\bs{x},t\right) \cdot \bs{\mathcal{E}}\left(\bs{x},t\right)\right)\bs{\mathcal{E}}\left(\bs{x},t\right) \right) 
	\label{E:P}
\end{equation}
with the linear and cubic susceptibilities $\chi_1, \chi_3:\R \to \R$, see e.g. \cite{boyd2020nonlinear}. For simplicity, the susceptibilities are scalar quantities, meaning that the material is isotropic.

In order to describe an interface, we allow $\chi_1$ and $\chi_3$ to have a jump at $x_1=0$ and denote
\begin{align*} 
	\chi_1\left(x_1\right) = \begin{cases}
		\chi_1^-(x_1), & x_1 < 0, \\
		\chi_1^+(x_1), & x_1 > 0,
	\end{cases} \qquad
	\chi_3\left(x_1\right) = \begin{cases} 
		\chi_3^-(x_1), & x_1 < 0,\\
		\chi_3^+(x_1), & x_1 > 0,
	\end{cases}
\end{align*}
for $\chi_1^\pm, \chi_3^\pm: \R_\pm := \left\{x_1 \in \R \mv \pm x_1 > 0 \right\} \to \R$. We also define
$$\epsilon_{1}:= \upepsilon_0(1+\chi_1),\ \ \epsilon_3:= \upepsilon_0\chi_3\quad\text{ and } \quad 
\epsilon_{1}^\pm:= \upepsilon_0\left(1+\chi_1^\pm\right), \ \ \epsilon_{3}^\pm:= \upepsilon_0\chi_3^\pm.$$

We investigate a two-dimensional setting with all fields independent of $x_3$, i.e.,
$$\left(\bs{\mathcal{D}}, \bs{\mathcal{E}}, \bs{\mathcal{H}}, \bs{\mathcal{P}} \right) = \left( \bs{\mathcal{D}}, \bs{\mathcal{E}}, \bs{\mathcal{H}}, \bs{\mathcal{P}} \right)\left(x_1,x_2,t\right).$$
Hence, the problem can be reduced to $\R^2$. From now on the variable $\bs{x}$ lies in $\R^2$. The two resulting half-spaces are denoted by $\R^2_-:=\!\left\{\bs{x}\in \R^2 \mv x_1<0 \right\}$ and $\R^2_+:=\!\left\{\bs{x}\in \R^2 \mv x_1>0 \right\}$ and the interface is $\Gamma:=\left\{\bs{x}\in \R^2 \mv x_1=0 \right\}$.
The aim of this paper is to describe the propagation of wave-packets localized near the interface $\Gamma$ and propagating in the $x_2$-direction, see Figure \ref{F:pulse-schem}.
\begin{figure}[t]
	\centering
	\begin{center}
		\includegraphics[width=0.8\linewidth]{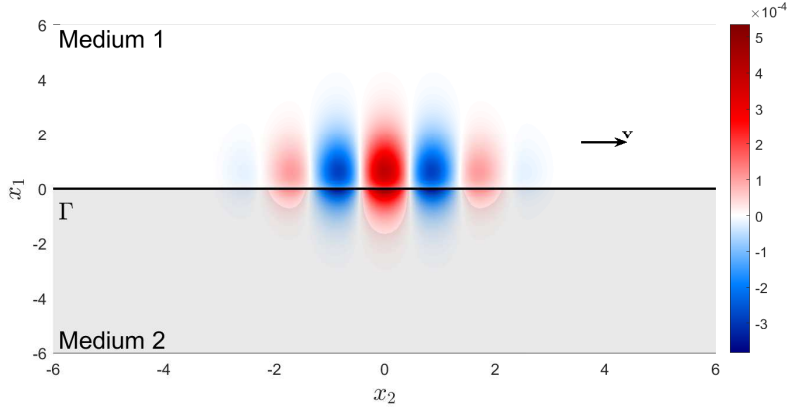}
	\end{center}
	\caption{Schematic of a pulse propagating in the direction $\textbf{v}=(0,1)^\top$, i.e., along the interface.}
	\label{F:pulse-schem}
\end{figure}

We also introduce the (time independent) surface charge density $\varrho_\Gamma:\Gamma\to \R$. Using Maxwell's equations in integral form, one can formally derive the jump conditions for solutions
\begin{equation}
	\begin{aligned}
		\llbracket \mathcal{D}_1 \rrbracket\left(\bs{x},t\right) = \varrho_\Gamma(\bs{x}),\ \ \llbracket \mathcal{E}_2 \rrbracket\left(\bs{x},t\right) = \llbracket \mathcal{E}_3 \rrbracket\left(\bs{x},t\right) &= 0, & \forall \bs{x} &\in \Gamma, t\geq 0, \\ \llbracket \mathcal{H}_1 \rrbracket\left(\bs{x},t\right) = \llbracket \mathcal{H}_2 \rrbracket\left(\bs{x},t\right) = \llbracket \mathcal{H}_3 \rrbracket\left(\bs{x},t\right) &= 0, & \forall \bs{x} &\in \Gamma, t \geq 0,
	\end{aligned}
	\label{E:IFC}
\end{equation}
in the absence of surface currents, see also Section I.4.2.4 of \cite{DL1990}. Here for $f:\R^2 \to \R$ continuous on $\overline{\R^2_-}$ and $\overline{\R^2_+}$ and for each $\bs{x} \in \Gamma$ we define
$$\llbracket f \rrbracket\left(\bs{x}\right):=\lim_{\bs{y}\to\bs{x}, \bs{y}\in \R^2_+}f(\bs{y})-\lim_{\bs{y}\to\bs{x}, \bs{y}\in \R^2_-}f(\bs{y}).$$
For $\varrho_\Gamma=0$ conditions \eqref{E:IFC} hold in trace sense for any $\bs{\mathcal{E}}(\cdot,t) \in \Hcurl(\R^2),$ $\bs{\mathcal{D}}(\cdot,t) \in \Hdiv(\R^2)$ and $\bs{\mathcal{H}}(\cdot,t) \in H^1(\R^2)^3$, see e.g. an appendix in \cite{BDPW21}.

We study specific solutions satisfying the reduction
\beq\label{E:reduce}
\cE_3\equiv \cH_1 \equiv \cH_2 \equiv 0.
\eeq 
This is a TM reduction since the fields propagate in the $(x_1,x_2)-$plane. One of the motivations for studying the TM case is that if $\epsilon_1^\pm$ is constant, all eigenfunctions of the linear eigenvalue problem for time harmonic fields (namely \eqref{E:ev-prob}) have to satisfy \eqref{E:reduce}, see e.g. \cite{BDPW21}. We study this reduced type of solutions also in the nonlinear case with non-constant $\epsilon_1^\pm$. Hence, we set
\beq\label{E:U}
\bs{U}:=(\cE_1,\cE_2,\cH_3)^\top
\eeq
and further define 
\begin{equation*}
	\bs{U}^\pm :=\bs{U}|_{\bs{x}\in \R^2_\pm}.
\end{equation*}
Throughout this paper we will always use $f^\pm$ to indicate the restriction of a function $f: \R^n \rightarrow \R$ to $\R^n_\pm$ with $n=1,2$.
We also write $\bs{U}_E := \left(U_1,U_2,0\right)^\top$ to denote the part of $\bs{U}$ corresponding to the electric field.
With \eqref{E:U} the first two equations in \eqref{E:Maxw} reduce to a system of three instead of six scalar equations. 
Note that the problem is indeed compatible with this reduction since the form of the nonlinearity implies that $\cD_j\equiv 0$ if $\cE_j\equiv 0$. 

Regarding the interface conditions, note that for time-independent surface charges we have $\llbracket \cD_1 \rrbracket (\bs{x},t) = \varrho_\Gamma(\bs{x})$ for all $t>0$ if $\llbracket \cD_1 \rrbracket (\bs{x},0) = \varrho_\Gamma(\bs{x})$.
This can be derived from the first component of the first equation in \eqref{E:Maxw}. Indeed, we get $\pa_t \llbracket \cD_1 \rrbracket (\bs{x},t) = \pa_{x_2}\llbracket \cH_3 \rrbracket (\bs{x},t)=0$ for all $\bs{x}\in \Gamma$ and $t > 0$. Also the divergence condition $\nabla\cdot \bs{\mathcal{D}}= \varrho_0$ needs to be checked only at $t=0$ as follows from the first equation in \eqref{E:Maxw}. For our specific solutions $\nabla \cdot \bs{\mathcal{H}} = 0$ is always satisfied, since the only non-trivial component $\mathcal{H}_3$ is independent of $x_3$. Therefore, the equations $\llbracket \cD_1 \rrbracket = \varrho_\Gamma$, $\nabla\cdot \bs{\mathcal{D}}= \varrho_0$, and $\nabla \cdot \bs{\mathcal{H}} = 0$ play no role in our analysis. Only the fact that $\nabla\cdot \bs{\mathcal{D}}$ equals a time independent quantity, sufficiently smooth in each half space, is used in the bootstrapping argument in Section~\ref{S:pf-main}.

Let $T^* >0$. From now on we study the initial value problem on the interval $(0,T^*)$ with initial data $\bs{U}^{(0)}: \R^2 \rightarrow \R^3$. 
With the above reduction the Maxwell problem \eqref{E:Maxw}, \eqref{E:D}, \eqref{E:P}, and \eqref{E:IFC} becomes
\begin{equation}
	\begin{pmatrix}
		\epsilon_1^\pm &0 & 0 \\ 0 & \epsilon_1^\pm & 0 \\ 0 & 0 &\upmu_0
	\end{pmatrix}
	\pa_t \bs{U}^\pm + \epsilon_3^\pm \pa_t 	
	\begin{pmatrix}
		\left(U_1^{\pm^2}+U_2^{\pm^2}\right)U_1^{\pm}\\
		\left(U_1^{\pm^2}+U_2^{\pm^2}\right)U_2^{\pm}\\
		0
	\end{pmatrix}
	+
	\begin{pmatrix}
		-\pa_{x_2}U_3^\pm\\
		\pa_{x_1}U_3^\pm\\
		\pa_{x_1}U_2^\pm- \pa_{x_2}U_1^\pm
	\end{pmatrix}=\bs{0}
	\label{E:Maxw-red}
\end{equation}
on $ \R^2_\pm \times (0,T^*)$ with 
\begin{equation} \label{E:IC}
	\bs{U}^\pm(\cdot,0)=\bs{U}^{(0),\pm} \qquad \text{on \ } \R^2_\pm,
\end{equation}
and the interface conditions
\beq
\llbracket U_2 \rrbracket=\llbracket U_3 \rrbracket=0 \quad \text{on } \Gamma \times [0,T^*).\label{E:IFC-red-E2H3}
\eeq
System \eqref{E:Maxw-red}, \eqref{E:IC}, and \eqref{E:IFC-red-E2H3} is the problem treated by our approximation result.

If, in addition, the sought solutions are to fit a prescribed volume charge density $\varrho_0$ and a prescribed surface charge density $\varrho_\Gamma$, then the initial condition $\bs{U}_E^{(0)}$ must be chosen such that the divergence condition
\begin{equation} \label{E:div-red}
	\begin{aligned}
		\pa_{x_1}\mathcal{D}_1\left(\bs{U}_E^{(0),\pm}\right) + \pa_{x_2}\mathcal{D}_2 \left(\bs{U}_E^{(0),\pm}\right) &= \pa_{x_1}\left(\epsilon_1^\pm U_1^{(0),\pm}+ \epsilon_{3}^\pm(U_1^{(0),\pm^2}+U_2^{(0),\pm^2})U_1^{(0),\pm}\right) \\
		&\quad + \pa_{x_2}\left(\epsilon_1^\pm U_2^{(0),\pm} + \epsilon_{3}^\pm(U_1^{(0),\pm^2}+U_2^{(0),\pm^2})U_2^{(0),\pm}\right)\\
		&= \varrho_0 \qquad \text{on \ } \R^2_\pm
	\end{aligned}
\end{equation}
and the interface condition
\begin{align}
	\left\llbracket \mathcal{D}_1\left(\bs{U}_E^{(0)} \right)\right\rrbracket = \left\llbracket \epsilon_1 U_1^{(0)}+ \epsilon_3\left(U_1^{(0)^2}+U_2^{(0)^2}\right)U_1^{(0)} \right\rrbracket&= \varrho_\Gamma\quad \text{on } \Gamma\label{E:IFC-red-D1}
\end{align}
are satisfied.

%We consider a wave-packet approximated by a modulation of the linear mode 
We study wave-packets based on the carrier wave
$$\bs{m}\left(x_1\right) \e^{\ie\left(k_0 x_2 - \nu_0 t \right)}, \qquad (\bs{x},t) \in (\R^2 \setminus \Gamma ) \times [0,\infty),$$
which solves the linear Maxwell equations corresponding to \eqref{E:Maxw-red}, i.e., with $\epsilon_3=0$, and with $\varrho_0 \equiv 0$ and $\varrho_\Gamma \equiv 0$. Here $k_0 \in \R$ is a fixed wave-number and $\bs{m}\left(x_1\right)$ is a localized profile (an eigenfunction) of the resulting eigenvalue problem in $x_1$, and $\nu_0 \in \R \setminus \{0\}$ is the corresponding eigenvalue, see Section \ref{S:lin-problem} for details. We use the classical formal asymptotic ansatz of a wave-packet
\begin{equation}
	\Uans\left(\bs{x}, t\right) =
	\begin{pmatrix}
		\mathcal{E}_{\mathrm{ans},1}\left(\bs{x}, t\right) \\ \mathcal{E}_{\mathrm{ans},2}\left(\bs{x}, t\right) \\ \mathcal{H}_{\mathrm{ans},3}\left(\bs{x}, t\right) \end{pmatrix} := \varepsilon A\left(\varepsilon\left(x_2 - \nu_1 t\right),\varepsilon^2t\right) \bs{m}\left(x_1\right) \e^{\ie\left(k_0 x_2 - \nu_0 t \right)} + \cc
	\label{E:Uans}
\end{equation}
with the complex envelope $A: \R \times [0,\infty) \to \C$, a small parameter $0 < \eps \ll 1$, the group velocity $\nu_1\in \R$ at the wave-number $k_0$ as defined in \eqref{E:nus}, and $\cc$ denoting the complex conjugate of the previous term. The envelope travels with the velocity $\nu_1$, depends slowly on the moving frame variable $x_2-\nu_1 t$ and is modulated in time on an even slower scale. 
If $A(\cdot, \eps^2 t)$ is localized, then $\Uans$ describes a wave-packet localized in both $x_1$ and $x_2$ and propagating in the $x_2$ direction. As explained in Section \ref{S:asymp}, the ansatz (after a suitable correction via higher-order terms) produces a small residual in the Maxwell system \eqref{E:Maxw-red} only if $A$ satisfies a nonlinear Schr\"odinger equation, see \eqref{E:NLS}. This NLS possesses localized solutions, see e.g. \cite{sulem1999nonlinear}, and is an effective macroscopic description of the wave-packet dynamics. 

We make the assumptions
\begin{equation} \label{E:ass-eps1}
	\epsilon_1^\pm \in C^3(\R_\pm) \cap W^{3,\infty}(\R_\pm), \quad \epsilon_1^\pm\geq \epsilon_{1,m}^\pm\in (0,\infty), \tag{A1} 
	\eeq
	\beq\label{E:ass-eps1-conv}
	\epsilon_1^\pm(x_1) \to \epsilon_1^{\pm\infty} \in \left[\epsilon_{1,m}^\pm, \infty\right) \text{ \ as } x_1 \to\pm\infty, \tag{A2}
	\eeq
	\beq\label{E:ass-eps3}
	\epsilon_3^\pm \in C^3(\R_\pm) \cap W^{3,\infty}(\R_\pm), \quad \epsilon_{3,m}^\pm \leq \epsilon_3^\pm \leq\epsilon_{3,M}^\pm \text{ \ with } \epsilon_{3,m}^\pm, \epsilon_{3,M}^\pm\in \R, \tag{A3} 
 	\eeq
\beq\label{E:ass-eps3-conv}
	\epsilon_3^\pm(x_1) \to \epsilon_3^{\pm\infty} \in \left[\epsilon_{3,m}^\pm, \epsilon_{3,M}^\pm\right] \text{ \ as } x_1 \to\pm\infty. \tag{A4}
\end{equation}

Our main result shows that the asymptotic wave-packet ansatz \eqref{E:Uans} is close to a true solution on a time interval of length $\mathcal{O}(\eps^{-2})$. This is the natural time scale for our approximation problem since $A$ depends on $\varepsilon^2 t$. Hence, changes in the envelope are observed only on this long time scale.
\begin{them}[Approximation Theorem]
	Assume \eqref{E:ass-eps1}, \eqref{E:ass-eps1-conv}, \eqref{E:ass-eps3}, \eqref{E:ass-eps3-conv}, and the conditions \eqref{E:ass-eigvexist}, \eqref{E:ass-esssp}, and \eqref{E:nres-cond}
	stated in Section~\ref{S:lin-problem}  and let $A \in \bigcap_{k=0}^4 C^{4-k}([0,T_0],$ $H^{3+k}(\R))$ be a solution of the effective nonlinear Schr\"odinger equation \eqref{E:NLS} for some $T_0 > 0$.  Assume that the initial value $\bs{U}^{(0)}:=\bs{U}(\cdot,0)\in\mathcal{H}^3(\R^2)^3$ satisfies the nonlinear compatibility conditions of order $3$, see Definition \ref{D:compat}. There exist constants $\eps_0 >0$ small enough and $C>0$ such that if $\eps \in (0,\eps_0)$ and if  $\bs{U}^{(0)}$ fulfills
	\begin{equation}
		\norm{\bs{U}^{(0)}-\Uans(\cdot,0)}_{\cH^3(\R^2)^3} \leq c \eps^{\frac{3}{2}},
		\label{E:difference_initial_values}
	\end{equation}
	with $c > 0$, then there exists a solution $\bs{U} \in \cG^3(\R^2\times (0,T_0\eps^{-2}))^3$  
	of \eqref{E:Maxw-red}, \eqref{E:IC}  and \eqref{E:IFC-red-E2H3} %which is a  classical solution in $(\R^2 \setminus \Gamma) \times (0,T_0 \varepsilon^{-2})$ 
	such that
	\begin{equation}\label{E:main-est}
		\norm{\bs{U} - \Uans}_{\cG^3(\R^2\times (0,T_0\eps^{-2}))^3} \leq C \eps^{\frac{3}{2} - \delta}
	\end{equation}
	for all $\delta >0$. (The space $\cG^3$ is introduced below.) If, in addition, $\bs{U}_E^{(0)}$ satisfies  \eqref{E:div-red} and \eqref{E:IFC-red-D1}, then we have $\nabla \cdot \bs{\mathcal{D}}(\bs{U}_E) = \varrho_0$ on $(\R^2 \setminus \Gamma) \times (0,T_0 \varepsilon^{-2})$ and $\llbracket \mathcal{D}_1(\bs{U}_E)\rrbracket = \varrho_\Gamma$ on $\Gamma \times (0,T_0 \varepsilon^{-2})$.
	\label{T:main}
\end{them}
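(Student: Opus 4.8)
The plan is to run the standard three-step scheme for justifying NLS approximations, adapted to the quasilinear system with interface. \emph{First (improved ansatz)}, I would upgrade the formal wave-packet $\Uans$ to an extended ansatz $\Uext=\Uans+\sum_{j=2}^{N}\eps^{j}\bs{V}_{j}$, where each $\bs{V}_{j}$ is a finite sum of terms $B_{j,\ell}(\eps(x_{2}-\nu_{1}t),\eps^{2}t)\,\bs{m}_{j,\ell}(x_{1})\,\e^{\ie\ell(k_{0}x_{2}-\nu_{0}t)}+\cc$ over harmonics $\ell$. Substituting into \eqref{E:Maxw-red} and sorting by powers of $\eps$ and by harmonics, the non-resonant profiles $\bs{m}_{j,\ell}$ are determined by inverting a linear boundary value problem in $x_{1}$, solvable precisely by the non-resonance condition \eqref{E:nres-cond}, while the quasilinear term $\epsilon_{3}^{\pm}\pa_{t}(\cdots)$ supplies the cubic inhomogeneities. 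The obstruction at the resonant first harmonic at order $\eps^{3}$ vanishes exactly when $A$ solves the NLS \eqref{E:NLS}; resonant contributions at higher orders are removed by admitting corrections to the envelope itself solving linear (inhomogeneous) equations. All $\bs{m}_{j,\ell}$ are chosen so that $\Uext$ obeys $\llbracket U_{2}\rrbracket=\llbracket U_{3}\rrbracket=0$ and, to the required order, the divergence constraint, using that these profiles inherit the matching at $x_{1}=0$ from the linear eigenvalue problem \eqref{E:ev-prob}. Exploiting the regularity $A\in\bigcap_{k=0}^{4}C^{4-k}([0,T_{0}],H^{3+k}(\R))$ and the assumptions \eqref{E:ass-eps1}--\eqref{E:ass-eps3}, this yields on $\R^{2}\times(0,T_{0}\eps^{-2})$
\begin{equation*}
\norm{\Uext-\Uans}_{\cG^{3}}=\mathcal{O}(\eps^{3/2}),\qquad \norm{\Uext}_{\cG^{3}}=\mathcal{O}(\eps^{1/2}),\qquad \norm{\Res(\Uext)}_{\cG^{3}}=\mathcal{O}(\eps^{\theta}),
\end{equation*}
where the $\eps^{-1/2}$ gains come from the $L^{2}_{x_{2}}$-scaling of $B(\eps\,\cdot\,)$ and $\theta>7/2$ is reached by taking $N$ large enough.

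\emph{Next (error equation)}, I would set $\bs{U}=\Uext+\eps^{3/2}\bs{R}$. Subtracting the residual, $\bs{R}$ solves a quasilinear Maxwell system of the same type \eqref{E:Maxw-red}: the curl part is unchanged, the coefficient matrix becomes $A_{0}(\Uext+\eps^{3/2}\bs{R})=A_{0}^{\mathrm{lin}}+\epsilon_{3}^{\pm}\mathcal{Q}(\Uext+\eps^{3/2}\bs{R})$ with $\mathcal{Q}$ quadratic (so the perturbation is of size $\mathcal{O}(\eps^{2})$ along the ansatz and $\mathcal{O}(\eps^{5/2})$ in the cross terms with $\bs{R}$), the interface conditions become $\llbracket R_{2}\rrbracket=\llbracket R_{3}\rrbracket=0$, and the right-hand side collects $-\eps^{-3/2}\Res(\Uext)$ together with terms linear, quadratic and cubic in $\bs{R}$ with prefactors $\mathcal{O}(\eps^{2})$, $\mathcal{O}(\eps^{5/2})$ and $\mathcal{O}(\eps^{3})$, each carrying at most one extra derivative. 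By \eqref{E:difference_initial_values} and $\norm{\Uext(\cdot,0)-\Uans(\cdot,0)}_{\cH^{3}}=\mathcal{O}(\eps^{3/2})$, the initial value $\bs{R}(\cdot,0)=\eps^{-3/2}(\bs{U}^{(0)}-\Uext(\cdot,0))$ is bounded in $\cH^{3}(\R^{2})^{3}$ uniformly in $\eps$; it inherits the order-$3$ compatibility conditions of Definition~\ref{D:compat} (which $\Uext(\cdot,0)$ satisfies by construction), so the local well-posedness result of \cite{schnaubelt2018local} provides a maximal solution $\bs{R}\in\cG^{3}$ on some interval $[0,T_{\max})$.

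\emph{The hard part} will be to show $T_{\max}>T_{0}\eps^{-2}$ with a bound uniform in $\eps$; this is the heart of the argument. I would fix $M>0$ and argue by continuity on the maximal subinterval where $\norm{\bs{R}(t)}_{\cH^{3}}\le M$, deriving there a differential inequality for a higher-order energy $\mathcal{E}(t)\simeq\norm{\bs{R}(t)}_{\cG^{3}}^{2}$. For tangential multi-indices $\pa_{t}^{a}\pa_{x_{2}}^{b}$, $a+b\le 3$, the interface conditions survive differentiation, and since the $x_{1}$-flux matrix of the curl part coincides on both sides the symmetrized energy identity has no boundary term on $\Gamma$ (conservativity of the interface conditions, as in \cite{schnaubelt2018local}); Moser and commutator estimates, using $\cH^{3}(\R^{2})\hookrightarrow W^{1,\infty}$ together with the $\mathcal{O}(\eps)$-smallness of $\Uext$ in $W^{1,\infty}$ and a careful $L^{\infty}/L^{2}$ splitting, then bound the quasilinear and nonlinear contributions by a Gronwall coefficient of size $\mathcal{O}(\eps^{2}+\eps^{5/2}M+\eps^{3}M^{2})$ besides the forcing $\eps^{-3/2}\norm{\Res(\Uext)}_{\cH^{3}}$. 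For multi-indices involving the normal derivative $\pa_{x_{1}}$ the interface conditions break down and no energy estimate is available; instead, following Section~\ref{S:bootstrap}, I would recover $\pa_{x_{1}}U_{2}$ and $\pa_{x_{1}}U_{3}$ algebraically from the second and third scalar equations of \eqref{E:Maxw-red} and $\pa_{x_{1}}U_{1}$ from the time-independent divergence relation for $\pa_{x_{1}}\mathcal{D}_{1}$ (invertible for small fields since $\epsilon_{1}^{\pm}\ge\epsilon_{1,m}^{\pm}>0$), then iterate this, differentiating the identities only by tangential operators, to control all mixed and pure normal derivatives of order $\le 3$ by the tangential energy and the fixed smooth datum $\varrho_{0}$. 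Integrating over $[0,T_{0}\eps^{-2}]$ the Gronwall exponent is $\mathcal{O}(\eps^{2}\cdot\eps^{-2})=\mathcal{O}(1)$, the residual contributes $\mathcal{O}(\eps^{-3/2}\cdot\eps^{\theta}\cdot\eps^{-2})=\mathcal{O}(\eps^{\theta-7/2})\to 0$, and the nonlinear-in-$\bs{R}$ terms contribute $o(1)$; hence $\norm{\bs{R}(t)}_{\cG^{3}}$ is bounded by a fixed multiple of $\norm{\bs{R}(0)}_{\cH^{3}}+1$, which stays strictly below $M$ once $M$ is large and $\eps$ small, closing the bootstrap and giving $T_{\max}>T_{0}\eps^{-2}$. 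I expect the delicate bookkeeping of the mixed space-time derivatives near the interface, weighed against the $\eps^{-2}$ time scale, to be exactly where an arbitrarily small loss $\delta>0$ in the exponent is incurred, so that one can only conclude $\norm{\bs{R}}_{\cG^{3}(\R^{2}\times(0,T_{0}\eps^{-2}))}\le C_{\delta}\eps^{-\delta}$.

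\emph{Finally}, the triangle inequality gives $\norm{\bs{U}-\Uans}_{\cG^{3}}\le\eps^{3/2}\norm{\bs{R}}_{\cG^{3}}+\norm{\Uext-\Uans}_{\cG^{3}}\le C_{\delta}\eps^{3/2-\delta}+C\eps^{3/2}$, which is \eqref{E:main-est}. For the last assertion, if $\bs{U}_{E}^{(0)}$ additionally satisfies \eqref{E:div-red} and \eqref{E:IFC-red-D1}, then along any solution of \eqref{E:Maxw-red} the first two scalar equations give $\pa_{t}(\pa_{x_{1}}\mathcal{D}_{1}+\pa_{x_{2}}\mathcal{D}_{2})=\pa_{x_{1}}\pa_{x_{2}}U_{3}-\pa_{x_{2}}\pa_{x_{1}}U_{3}=0$ in each half-space and $\pa_{t}\llbracket\mathcal{D}_{1}\rrbracket=\pa_{x_{2}}\llbracket U_{3}\rrbracket=0$ on $\Gamma$ by \eqref{E:IFC-red-E2H3}, so $\nabla\cdot\bs{\mathcal{D}}(\bs{U}_{E})=\varrho_{0}$ on $(\R^{2}\setminus\Gamma)\times(0,T_{0}\eps^{-2})$ and $\llbracket\mathcal{D}_{1}(\bs{U}_{E})\rrbracket=\varrho_{\Gamma}$ on $\Gamma\times(0,T_{0}\eps^{-2})$.
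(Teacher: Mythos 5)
Your architecture matches the paper's proof (extended ansatz $\Uext$, error equation via $\bs{U}=\Uext+\eps^{a}\bs{R}$, local existence from \cite{schnaubelt2018local}, bootstrap on $[0,T_{0}\eps^{-2}]$ combining tangential energy estimates with algebraic recovery of $\pa_{x_1}R_{2},\pa_{x_1}R_{3}$ from the curl equations and $\pa_{x_1}R_{1}$ from the time-independent divergence identity, then a final triangle inequality). However, there is a genuine gap in your choice $a=\tfrac32$.

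With the rescaling $\bs{U}=\Uext+\eps^{a}\bs{R}$, the forcing $\eps^{-a}\Res$ enters the Gronwall estimate after integration over $[0,T_{0}\eps^{-2}]$ as $\int_{0}^{t}\eps^{-a}\|\pa^{\bs\beta}\Res\|_{L^{2}}\|\bs{r}_{\bs\beta}\|_{L^{2}}\,\mathrm{d}s\lesssim\eps^{-a}\cdot\eps^{7/2}\cdot\eps^{-2}=\eps^{3/2-a}$, and the bootstrap $z(t)<\rho^{2}/2$ can only close (for a fixed $\rho\le1$, which is needed so that the Sobolev constants do not grow with $\rho$) if this tends to zero, forcing $a<\tfrac32$. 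This is precisely why the paper takes $a\in[\tfrac54,\tfrac32)$, and the loss $\delta>0$ in \eqref{E:main-est} arises because $\|\bs{U}-\Uext\|_{\cG^{3}}\le\rho\eps^{a}$ can only be combined with $\|\Uext-\Uans\|_{\cG^{3}}\lesssim\eps^{3/2}$ into $C_{\delta}\eps^{3/2-\delta}$ by letting $a\to\tfrac32^{-}$. You instead fix $a=\tfrac32$ and compensate by claiming $\|\Res(\Uext)\|_{\cG^{3}}=\mathcal{O}(\eps^{\theta})$ with $\theta>\tfrac72$, ``by taking $N$ large enough.'' But this exceeds the theorem's hypotheses: the residual order $\tfrac72$ is exactly what the $\eps^{3}$-ansatz \eqref{E:Uext} delivers given $A\in\bigcap_{k=0}^{4}C^{4-k}([0,T_{0}],H^{3+k})$ and $\epsilon_{1},\epsilon_{3}\in C^{3}\cap W^{3,\infty}$ (Assumptions A1, A3). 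Adding an $\eps^{4}$ correction would generate $\pa_{X_{2}}^{4}A$ in the residual, whose $\cG^{3}$-control requires more time-regularity than $\bigcap_{k=0}^{4}C^{4-k}$ provides (the NLS trades one $T$-derivative for two $X_{2}$-derivatives), and the new non-resonant profiles would require $\epsilon_{1},\epsilon_{3}\in W^{4,\infty}$ via the analogue of Lemma~\ref{lemma_regularity_of_eigenfunction}, not assumed here. Moreover, the resonant $E_{1}$-contribution at order $\eps^{4}$ would impose a further secular compatibility condition on $A$, which is not part of \eqref{E:NLS}.

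There is also an internal inconsistency in your explanation of the loss $\delta$: if one really could achieve $\theta>\tfrac72$ with $a=\tfrac32$, the Gronwall argument would close cleanly and yield $\|\bs{R}\|_{\cG^{3}}\le C$ with no loss, giving $\|\bs{U}-\Uans\|_{\cG^{3}}\le C\eps^{3/2}$ --- strictly better than \eqref{E:main-est} --- so attributing the $\delta$ to ``delicate bookkeeping near the interface'' is not correct. The $\delta$ is not an artifact of the interface estimates; it is the price of keeping $a$ strictly below $\tfrac32$ so that the residual term $\eps^{3/2-a}$ and the $\eps^{1+a}t\lesssim\eps^{a-1}$ terms simultaneously tend to zero. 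Your treatment of the final divergence and charge-jump conservation is correct and matches the paper's reasoning.
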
 
\begin{remark}
	1. The existence of initial data $\bs{U}^{(0)}$ which satisfy \eqref{E:difference_initial_values} and the nonlinear compatibility conditions of order 3 is an open problem. Similarly, the existence of initial data $\bs{U}^{(0)}$ which satisfy \eqref{E:difference_initial_values}	 as well as \eqref{E:div-red} and \eqref{E:IFC-red-D1} for given $\varrho_0$ and $\varrho_\Gamma$ is an open problem. 
	
	For the case $\varrho_0=0$ and $\varrho_\Gamma=0$ this problem was considered in \cite{dohnal2022quasilinear}, where 
	initial data were found in the form $\bs{U}^{(0)}= \Uans(\cdot,0) + \nabla \phi$ with a correction function $\phi$, such that \eqref{E:difference_initial_values} holds with an exponent $a < 1$ instead of $\frac{3}{2}$. Note that our ansatz $\Uans$ naturally fits the choice $\varrho_0=0$ and $\varrho_\Gamma=0$ because $\epsilon_{1}\bs{m}\e^{\ie k_0x_2}$ is divergence free on $\R^2_\pm$ and $\epsilon_{1}m_1$ is continuous at $x_1=0$, see Remark~\ref{rem:lin-div}. As a result one can easily show that $\|\nabla\cdot \bs{\cD}(\bs{U}_{\mathrm{ans},E})\|_{L^2(\R^2)} \leq c\eps^{3/2}$ and $\left\llbracket \mathcal{D}_1\left(\bs{U}_{\mathrm{ans},E}(\cdot,0) \right)\right\rrbracket\leq c\eps^3$ for any bounded continuous $A$, where $\bs{U}_{\mathrm{ans},E} := \left(U_{\mathrm{ans},1},U_{\mathrm{ans},2},0\right)^\top$.
	
	2. Due to their high regularity the components $\bs{\mathcal{E}}:=(U_1,U_2,0)^\top$ and $\bs{\mathcal{H}}:=(0,0,U_3)^\top$
	of the solution $\bs{U}$ of Theorem \ref{T:main} satisfy \eqref{E:Maxw}, \eqref{E:D}, \eqref{E:P}, and \eqref{E:IFC} on $(\R^2 \setminus \Gamma) \times (0,T_0 \varepsilon^{-2})$ in the classical sense. 
	
	3. In the case $\varrho_\Gamma=0$ the regularity of $\bs{U}$ produced by Theorem \ref{T:main} guarantees that we have $\bs{\mathcal{E}} \in \Hcurl(\R^2),$ $\bs{\mathcal{D}} \in \Hdiv(\R^2)$ and $\bs{\mathcal{H}} \in H^1(\R^2)$ at each point in time. This is because functions $\bs{f}$ with $\bs{f}|_{\R^2_\pm}\in \Hcurl(\R^2_\pm)$ and with the tangential trace being continuous across the interface, are in $\Hcurl(\R^2)$. An analogous statement holds for $\Hdiv$ and the continuity of the normal trace, see \cite{BDPW21}.
\end{remark}	

For $m,n\in \N$, $ p\in [1,\infty]$ and an interval $J\subset \R$ we define
\begin{align*}
	\mathcal{W}^{m,p}(\R^n) &:= \left\{u \in L^p(\R^n) \mv u^- \in W^{m,p}(\R_-^n), \ u^+ \in W^{m,p}(\R_+^n)\right\}, \\
	\norm{u}_{\mathcal{W}^{m,p}(\R^n)} &:= \norm{u^-}_{W^{m,p}(\R_-^n)} + \norm{u^+}_{W^{m,p}(\R_+^n)},\\
	\mathcal{W}^{m,p}(\R^n \times J) &:= \left\{u \in L^p(\R^n \times J) \mv u^- \in W^{m,p}(\R_-^n \times J), \ u^+ \in W^{m,p}(\R_+^n \times J)\right\}, \\
	\norm{u}_{\mathcal{W}^{m,p}(\R^n\times J)} &:= \norm{u^-}_{W^{m,p}(\R_-^n \times J)} + \norm{u^+}_{W^{m,p}(\R_+^n \times J)}
\end{align*}
with the usual Lebesgue spaces $L^p$ and Sobolev spaces $W^{m,p}$. For $\mathcal{H}^m:=\mathcal{W}^{m,2}$ we also set
\begin{align*}
	\mathcal{G}^m(\R^n\times J) &:= \bigcap_{j=0}^m C^j(\overline{J},\mathcal{H}^{m-j}(\R^n)),\ 
	\norm{u}_{\mathcal{G}^m(\R^n\times J)} := \max_{0 \leq j \leq m} \norm{\partial_t^j u}_{L^\infty(J,\mathcal{H}^{m-j}(\R^n))}.
\end{align*}
The norm $\|\cdot\|_{\cG^3}$ in Theorem \ref{T:main} thus contains spatial and temporal derivatives of total degree three.

\begin{remark}
	We will often extend functions $f$ with $f^\pm \in L^p(\R^n_\pm)$ to a function in $L^p(\R^n)$. In general, a function $g \in \mathcal{H}^1(\R^n)$ does not belong to $H^1(\R^n)$ as the weak partial derivatives only exist in the half-spaces, e.g., $\partial_{x_1}g^+ \in L^2(\R_+^n)$ and $\partial_{x_1}g^- \in L^2(\R_-^n)$. Nevertheless, we will often write $\partial_{x_1}g \in L^2(\R^n)$ because the weak derivatives can be extended to a function defined on $\R^n$ by an arbitrary extension on $\R^n\setminus(\R_+^n \cup \R_-^n)$.
\end{remark}

The rest of the paper is organized as follows. In Section \ref{S:lin-problem} the linear spatial eigenvalue problem is studied in order to construct a carrier wave for the wave-packet. Section \ref{S:asymp} provides a formal derivation of the NLS as an amplitude equation. In Section \ref{S:res} we estimate the residual of the asymptotic approximation. In Section \ref{S:exist} we rewrite the reduced quasilinear Maxwell system \eqref{E:Maxw-red}, \eqref{E:IC}, \eqref{E:IFC-red-E2H3} in the form of a hyperbolic system and adapt the local existence results of \cite{schnaubelt2018local} to this problem. The proof of the main approximation result (Theorem \ref{T:main}) is provided in Sec. \ref{S:pf-main}. The proof is based on a bootstrapping argument which extends the local existence from \cite{schnaubelt2018local} to the existence on time intervals of length $\mathcal{O}(\eps^{-2})$ for initial data close to the (small) asymptotic ansatz. The bootstrapping simultaneously provides the error bound \eqref{E:main-est}. Finally in Appendix \ref{S:numerical_eigenvalue} we describe the numerical method for computing eigenvalues (and eigenfunctions) of the linear interface problem. In Appendix \ref{S:resiudal_order_4} the highest order residual terms are provided explicitly. Appendix \ref{A:products} contains estimates on products of functions in the used function spaces. 

%----------------------------------------------------------------
\section{Linear time-harmonic eigenvalue problem}\label{S:lin-problem}
\subsection{Linear eigenvalue problem}
We first study the linear part of equation \eqref{E:Maxw-red} and \eqref{E:IFC-red-E2H3}, i.e., with $\epsilon_3=0$, assuming that $\epsilon_1$ satisfies \eqref{E:ass-eps1} and \eqref{E:ass-eps1-conv}.
Using the reduction \eqref{E:reduce}, \eqref{E:U} and the ansatz
\begin{equation*}
	\bs{U}(\bs{x},t)=\e^{\ie (k x_2 -\omega t)} \bs{w}(x_1) + \text{c.c.}, \quad (\bs{x},t) \in (\R^2 \setminus \Gamma ) \times [0,\infty),
\end{equation*}
where $k,\omega \in \R$ and $\bs{w}:\R\to \C^3$, one arrives at the eigenvalue problem 
\begin{equation}
	\begin{aligned}
		L\left(k\right)\bs{w}(x_1)+\omega \Lambda\bs{w}(x_1) &= \bs{0}, & x_1 &\in \R\setminus \{0\}
	\end{aligned} 
	\label{E:ev-prob}
\end{equation}
for the profile $\bs{w}$.
Here for each $k\in \R$ the operators $L(k): D(L(k)) \rightarrow L^2(\R)^3$ and $\Lambda: D(\Lambda) \rightarrow L^2(\R)^3$ are given by 
\begin{equation}
	L\left(k\right)\bs{w} := \begin{pmatrix}
		k w_3 \\
		\ie \partial_{x_1} w_3 \\
		k w_1 + \ie \partial_{x_1}w_2 
	\end{pmatrix}, \qquad	\Lambda\bs{w} := \begin{pmatrix}
		\epsilon_1(x_1) w_1 \\
		\epsilon_1(x_1) w_2 \\
		\upmu_0 w_3
	\end{pmatrix},
	\label{E:L}
\end{equation}
with the domains $D(\Lambda):=L^2(\R)^3$ and
$$
\begin{aligned}
	D(L(k)) := &\left\{ \bs{w}: \R \to \C^3 \mv w_{1} \in L^2(\R), w_{2}, w_{3}\in H^1(\R)\right\}.
	%.\\ &\left.\partial_{x_1}(\epsilon_{1}^\pm w_{1}^\pm)+\ie k \epsilon_{1}^\pm w_{2}^\pm=0 \ \text{distributionally on} \ \R_\pm\right\}.
\end{aligned}
$$ 
We call $\omega=\omega(k) \in \R$ an \emph{eigenvalue} of \eqref{E:ev-prob} if there exists a function $\bs{w}=\bs{w}(k)\in D(L(k)) \setminus \{\bs{0}\}$ such that \eqref{E:ev-prob} holds. For the eigenfunctions we choose the normalization
\beq\label{E:w-normaliz}
\int_\R \bs{w}^\top  \Lambda \overline{\bs{w}} \, \mathrm{d} x_1 = 1.
\eeq
Because the operator $L(k)$ is self-adjoint and $\Lambda$ is real and diagonal, all eigenvalues $\omega$ (in fact the whole spectrum) are indeed real. 
The interface conditions for $\bs{w}$ corresponding to \eqref{E:IFC-red-E2H3} are
\begin{equation}
	\llbracket w_2 \rrbracket_{\text{1D}} = \llbracket w_3 \rrbracket_{\text{1D}} = 0,
	\label{E:IFC-linear}
\end{equation}
where we define $\llbracket f \rrbracket_{\text{1D}}:=\lim_{x_1\to 0^+}f(x_1)-\lim_{x_1\to 0^-}f(x_1)$ for $f: \R \to \R$.  Solutions $\bs{w}$ of \eqref{E:ev-prob} fulfil these conditions, since $w_2,w_3 \in H^1(\R)$.

\begin{remark}\label{rem:lin-div}
	Let us, in addition, discuss the linear versions of divergence condition \eqref{E:div-red} and interface condition \eqref{E:IFC-red-D1}.
	Solutions $\bs{w}$ of \eqref{E:ev-prob} with $\omega\neq0$ satisfy $\llbracket \epsilon_1 w_1 \rrbracket_{\text{1D}}=0$ since $w_3 \in H^1(\R)$ and so $\epsilon_1 w_1$ is continuous because of $kw_3 + \omega\epsilon_1 w_1 = 0$, i.e., the first line in \eqref{E:ev-prob}.
	The (linear) divergence condition with $\varrho_0 = 0$, i.e.,
	$\partial_{x_1} \mathcal{D}_1(\bs{U}_E) + \partial_{x_2} \mathcal{D}_2(\bs{U}_E) = 0$ on $(\R^2 \setminus \Gamma) \times [0,\infty)$ 
	with $\epsilon_3=0$  is also automatically satisfied. Indeed, we have (for $\epsilon_{3}=0$)
	\begin{equation*}
		\partial_{x_1} \mathcal{D}_1(\bs{U}_E) + \partial_{x_2} \mathcal{D}_2(\bs{U}_E) = \left( \partial_{x_1} \left(\epsilon_1 w_1 \right) + \ie k \epsilon_1 w_2 \right) \e^{\ie (k x_2 - \omega t)} + \cc
	\end{equation*}
	and due to \eqref{E:ev-prob}
	\begin{equation}
		\partial_{x_1} \left(\epsilon_1 w_1 \right) + \ie k \epsilon_1 w_2 
		=-\frac{k}{\omega} \partial_{x_1} w_3 -\ie \frac{k}{\omega} \epsilon_1 \left(\frac{\ie \partial_{x_1} w_3}{\epsilon_1} \right) = 0.
		\label{E:linear_divergence}
	\end{equation}
\end{remark}
\begin{remark}
	Note that the second assumption in \eqref{E:ass-eps1} eliminates the pathological case where each $\omega \in \C$ 
	is an eigenvalue of infinite multiplicity, which is caused by the fact that gradient fields belong in the kernel of the curl operator. Indeed, if e.g.\ $\epsilon_1^+=0$, then $(\pa_{x_1}f, \ie k f, 0)^\top$ is an eigenfunction of \eqref{E:ev-prob} for any $f\in C^\infty_c(\R_+)$. Here, the electric field part $(\pa_{x_1}f, \ie k f)^\top$ corresponds to a gradient field (in the selected ansatz).
\end{remark}
For the construction of the wave-packet we need that near $k = k_0$ there is a unique smooth eigenvalue curve $k \mapsto \omega(k)$ and we set
\beq\label{E:nus}
\nu_0:=\omega(k_0), \quad \nu_1:=\pa_k \omega(k_0), \quad \nu_2:=\pa_k^2 \omega(k_0).
\eeq
This eigenvalue curve defines the so-called dispersion relation. In addition, let us assume that the eigenvalue 
$\nu_0$ is simple and denote the (normalized) eigenfunction by
\[%\label{E:m}
\bs{m}:=\bs{w}(k_0).
\]
We impose the following assumptions for Theorem \ref{T:main}.\beq\label{E:ass-eigvexist}
\nu_0=\omega(k_0) \text{ is a simple eigenvalue of \eqref{E:ev-prob} isolated from all other eigenvalues at $k=k_0$}. \tag{A5}
\eeq
In order to control the essential spectrum corresponding to \eqref{E:ev-prob}, we also require
\beq\label{E:ass-esssp}
\omega(k_0) \epsilon_1 \neq 0, \quad \omega(3k_0) \epsilon_1 \neq 0 \quad \text{and}
\quad k_0^2 > \omega(k_0)^2 \upmu_0 \epsilon_1^{\pm \infty}. \tag{A6}
\eeq 
In view of \eqref{E:ass-eps1} and \eqref{E:ass-eps1-conv}, the first two conditions in \eqref{E:ass-esssp} just say that $\omega(k_0)$ and $\omega(3k_0)$ do not vanish.
As noted in Corollary~\ref{cor:bsw}, the above assumptions also guarantee that the eigenvalue $\omega(k)$ and the
eigenfunction $\bs{w}(k)$ depend smoothly on $k$ near $k_0$, which is needed below.

Given a solution $\bs{w}(k)$, also $\widetilde{\bs{w}} := \left(\overline{w}_1, - \overline{w}_2, \overline{w}_3 \right)^\top$ solves \eqref{E:ev-prob}. 
We can thus choose the eigenfunction $\bs{w}$ with real valued $w_1,w_3$ and imaginary valued $w_2$, which we do throughout the rest of the paper. 
With this choice the normalization \eqref{E:w-normaliz} means that 
\beq\label{E:w-normaliz2}
\int_\R \left(\epsilon_1 \left( m_1^2 - m_2^2\right) + \upmu_0 m_3^2\right)\,\mathrm{d}x_1=1.
\eeq

For the proof of Theorem \ref{T:main}, i.e., the justification of \eqref{E:Uans} as an asymptotic approximation of a solution of the cubically nonlinear problem, it will be necessary to assume the non-resonance condition
\begin{equation}
	3 \nu_0 \neq \omega(3k_0), \text{ \ i.e., }	3 \nu_0 \text{ is not an eigenvalue of \eqref{E:ev-prob} at } k = 3k_0, \tag{A7}
	\label{E:nres-cond} 
\end{equation}
see \eqref{E:system_h} and the following arguments in Section \ref{S:asymp}.

Since $\epsilon_1$ depends on $x_1$ it is in general not possible to solve \eqref{E:ev-prob} explicitly. We therefore have to calculate solutions numerically and check if the Assumptions \eqref{E:ass-eigvexist}, \eqref{E:ass-esssp}, and \eqref{E:nres-cond} are satisfied. However, as explained above, the first two assumptions in \eqref{E:ass-esssp} describe the generic situation. Also assumption \eqref{E:nres-cond}  (being an inequality) holds generically. Moreover, note that $k\mapsto \omega(k)$ is typically nonlinear.
\begin{figure}[ht!]
	\centering
	\begin{subfigure}[b]{0.45\linewidth}
		\includegraphics[width=\linewidth]{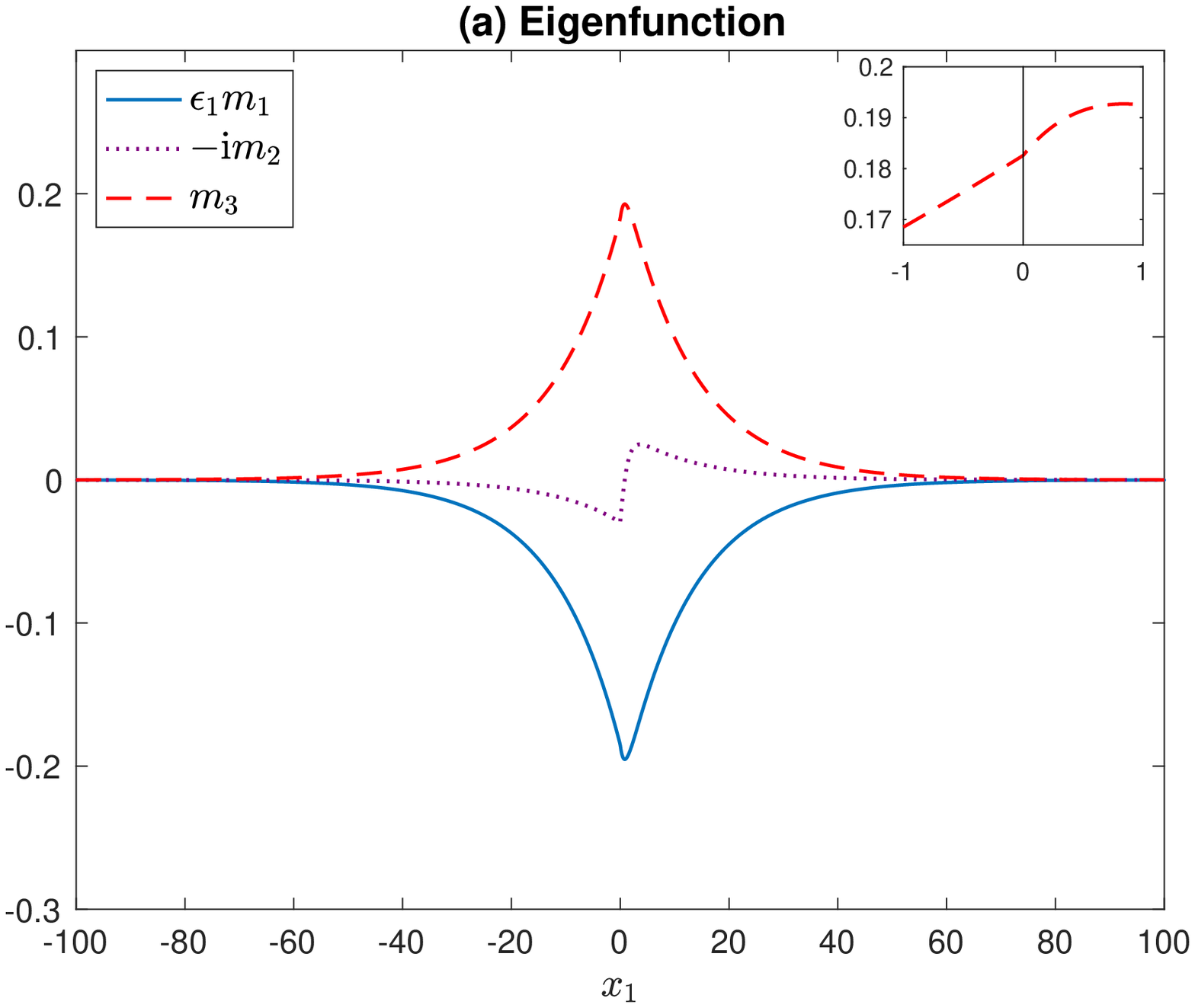}
	\end{subfigure}
	\begin{subfigure}[b]{0.45\linewidth}
		\includegraphics[width=\linewidth]{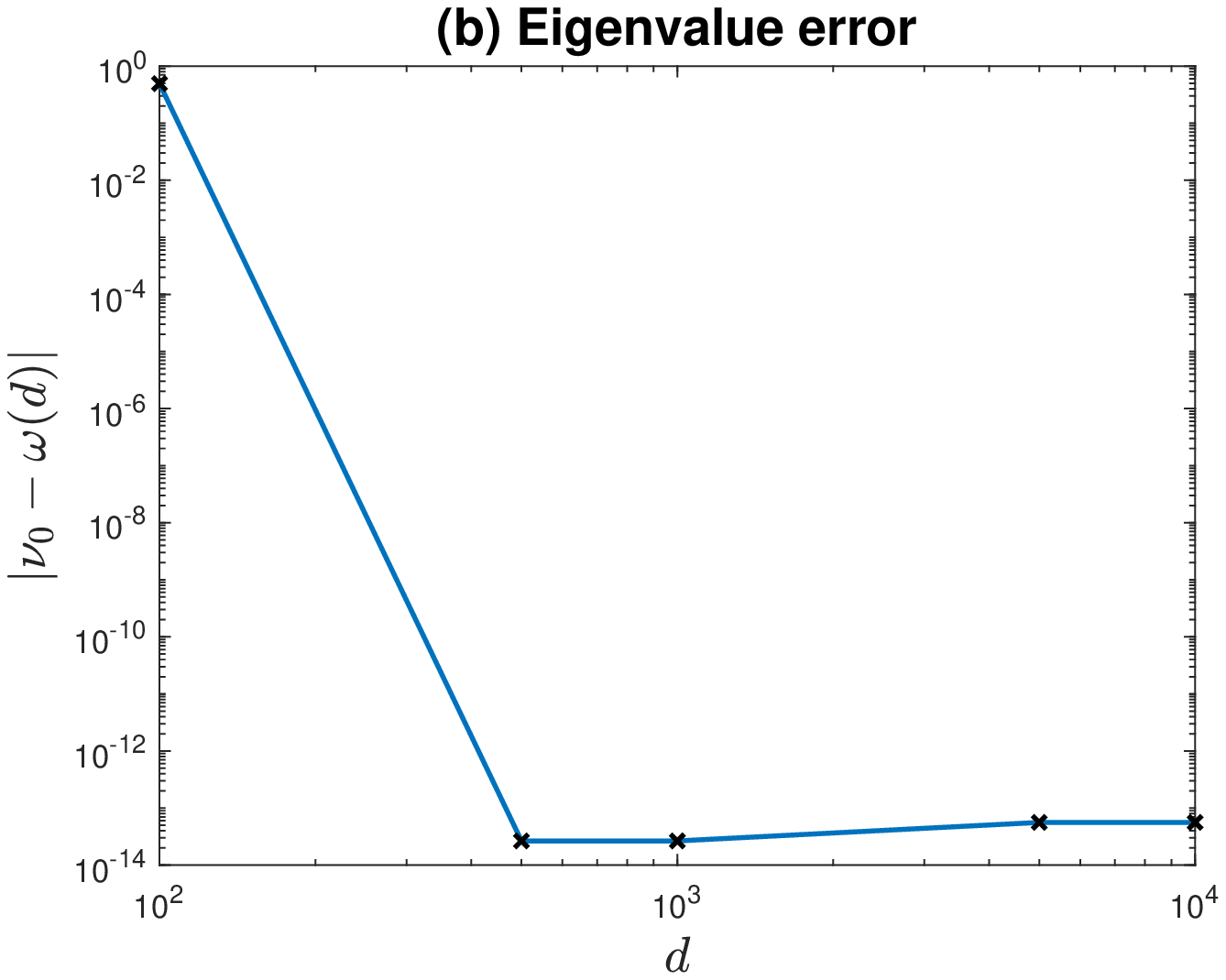}
	\end{subfigure}
	\caption{(for Example \ref{Ex:eval}) (a) The eigenfunction $\bs{m}$ of the linear problem \eqref{E:ev-prob} for $k_0 = 0.5$. (We plot $\epsilon_1 m_1$ to show that the linear interface conditions are satisfied.) The inset shows that the eigenfunction is not symmetric and not $C^1$. (b) Numerical convergence test for the eigenvalue $\omega = \nu_0 \approx 0.494$ of $T_{k_0,\omega}:=L(k_0)+ \omega \Lambda$ for $k_0 = 0.5$ in dependence on the computational box size $d$.}
	\label{F:isol-eval}
\end{figure}
\begin{bsp} \label{Ex:eval}
	For $\epsilon_1(x_1)=1\chi_{\R_-}+\left(1+{\rm e}^{-x_1}\right)\chi_{\R_+}$ and $\upmu_0 = 1$, we compute a numerical solution of \eqref{E:ev-prob} with the method described in Appendix \ref{S:numerical_eigenvalue}. 
	We study the generalized eigenvalue problem $L(k)\bs{w} = -\omega \Lambda \bs{w}$ for $k = k_0$ on the interval $[-d,d]$ and compute all eigenvalues in a neighborhood of $\omega = \nu_0$. For $k_0 = 0.5$, step size $h = 0.01$ and interval length $d = 5 \cdot 10^{4}$ we get the eigenvalue $\omega(k_0) = \nu_0 \approx 0.494$ and no other eigenvalue in a neighborhood of $\nu_0$.
	
	To check the effects of the boundary, we repeated the calculation for different intervals $[-d,d]$ and get the eigenvalue $\omega(d)$ closest to $\nu_0$ in dependence on $d$ . In Figure \ref{F:isol-eval} $\mathrm{(a)}$ we see the calculated eigenfunction $\bs{w}$ and Figure \ref{F:isol-eval} $\mathrm{(b)}$ shows that the error in the calculation of $\omega(d)$ converges to zero for increasing $d$. 
	
	Note that for this example we have $\epsilon_{1,m}^\pm = \epsilon_1^{\pm \infty} = 1$ and one can numerically calculate that the eigenvalue closest to $3\nu_0 \approx 1.481 $ is given by $\omega(3k_0) \approx 1.404$. Therefore, Assumptions \eqref{E:ass-eps1}, \eqref{E:ass-eps1-conv}, \eqref{E:ass-eigvexist}, \eqref{E:ass-esssp}, and \eqref{E:nres-cond} appear to be satisfied. 
\end{bsp}

\subsection{Solution of the inhomogeneous problem}
\label{section_solution_inhomogeneous_problem}

In Section \ref{S:asymp} we also have to solve the inhomogeneous version of the eigenvalue problem
\begin{equation}
	T_{k,\omega} \bs{v}:= \left(L(k) + \omega \Lambda\right) \bs{v} = \begin{pmatrix}
		\omega \epsilon_1 & 0 & k \\
		0 & \omega \epsilon_1	& \ie \partial_{x_1} \\
		k & \ie \partial_{x_1} & \omega \upmu_0 
	\end{pmatrix} \bs{v} = \bs{f},
	\label{E:general_inhomo_system}
\end{equation}
with $\bs{f} \in N(T_{k,\omega})^\perp$ and the kernel $$N(T_{k,\omega})\subset D(T_{k,\omega}):=\left\{\bs{v}\in L^2(\R)^3 \mv v_2,v_3\in H^1(\R)\right\}.$$

\begin{lem}\label{L:inhom-prob}
	Let $ \epsilon_1\in\cW^{1,\infty}(\R)$ satisfy \eqref{E:ass-eps1-conv} and let $k, \omega \in \R$ be such that $k^2 > \omega^2 \epsilon_1^{\pm \infty} \upmu_0$ and $\omega \epsilon_1 \neq 0$. Assume that we are in one of the cases
	\begin{enumerate}[label=\roman*)]
		\item $0$ is a simple eigenvalue of $T_{k,\omega}$ isolated from all other eigenvalues;
		\item $0$ is not an eigenvalue of $T_{k,\omega}$. 
	\end{enumerate}
	If $\bs{f}\in N(T_{k,\omega})^\perp \subset L^2(\R)^3$ ($\bs{f} \in L^2(\R)^3$ if $N(T_{k,\omega})=\{0\}$), then \eqref{E:general_inhomo_system} has a solution $\bs{v}\in D(T_{k,\omega})$.
\end{lem}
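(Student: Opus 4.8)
The plan is to treat $T_{k,\omega}$ as a self-adjoint operator, reduce the equation to a $2\times 2$ first-order ODE system, and extract the solvability from a Fredholm argument. First I would record that $\Lambda$ is a bounded, real, diagonal (hence self-adjoint) multiplication operator and $L(k)$ is self-adjoint on $D(L(k))=D(T_{k,\omega})$, so $T_{k,\omega}=L(k)+\omega\Lambda$ is self-adjoint on $D(T_{k,\omega})$. Therefore, as soon as $T_{k,\omega}$ has closed range, $\range{T_{k,\omega}}=N(T_{k,\omega})^\perp$, and every $\bs f\in N(T_{k,\omega})^\perp$ (and every $\bs f\in L^2(\R)^3$ if $N(T_{k,\omega})=\{\bs 0\}$) has a preimage $\bs v\in D(T_{k,\omega})$; in case ii) the operator is then even boundedly invertible. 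Thus the whole statement reduces to showing that $T_{k,\omega}$ is Fredholm, i.e.\ $0\notin\sigma_{\mathrm{ess}}(T_{k,\omega})$.

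Next I would eliminate $v_1$. Since $\omega\epsilon_1$ is bounded away from $0$ on $\R$ (which is how the hypothesis $\omega\epsilon_1\neq0$ is to be read here, e.g.\ under \eqref{E:ass-eps1}), the first line of \eqref{E:general_inhomo_system} gives $v_1=(f_1-kv_3)/(\omega\epsilon_1)\in L^2(\R)$ whenever $v_3\in L^2(\R)$; inserting this into the third line turns \eqref{E:general_inhomo_system} into the first-order $2\times2$ system
\begin{equation*}
  \ie\,\partial_{x_1}\!\begin{pmatrix}v_2\\ v_3\end{pmatrix}
  =\begin{pmatrix}0 & \tfrac{k^2}{\omega\epsilon_1}-\omega\upmu_0\\[3pt] -\omega\epsilon_1 & 0\end{pmatrix}\!\begin{pmatrix}v_2\\ v_3\end{pmatrix}
  +\begin{pmatrix}f_3-\tfrac{k}{\omega\epsilon_1}f_1\\ f_2\end{pmatrix}
\end{equation*}
with right-hand side in $L^2(\R)^2$; once $(v_2,v_3)\in L^2(\R)^2$ solves it, the system forces $\partial_{x_1}v_2,\partial_{x_1}v_3\in L^2(\R)$, hence $v_2,v_3\in H^1(\R)$ and $\bs v\in D(T_{k,\omega})$. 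This elimination is a bounded, boundedly invertible change of unknown and data, so it identifies $N(T_{k,\omega})$ and the cokernel of $T_{k,\omega}$ with those of the reduced operator $\widetilde{T}:=\ie\partial_{x_1}-B(x_1)$ on $H^1(\R)^2$ (and closed range with closed range); thus $T_{k,\omega}$ is Fredholm once $\widetilde{T}$ is, and index $0$ then follows from the self-adjointness of $T_{k,\omega}$.

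It remains to show $0\notin\sigma_{\mathrm{ess}}(\widetilde{T})$. By \eqref{E:ass-eps1-conv} the coefficient matrix $B(x_1)$ converges as $x_1\to\pm\infty$ to the constant matrix $B^{\pm\infty}$ obtained by freezing $\epsilon_1\equiv\epsilon_1^{\pm\infty}$; the frozen operator $\widetilde{T}^{\pm\infty}=\ie\partial_{x_1}-B^{\pm\infty}$ has Fourier symbol with determinant $\xi^2+\big(k^2-\omega^2\upmu_0\epsilon_1^{\pm\infty}\big)$, which by the hypothesis $k^2>\omega^2\upmu_0\epsilon_1^{\pm\infty}$ is bounded below by a positive constant and grows like $\xi^2$ while the symbol itself grows like $|\xi|$; hence $\widetilde{T}^{\pm\infty}$ is boundedly invertible on $L^2(\R)^2$ (equivalently, the homogeneous system has an exponential dichotomy at $\pm\infty$ with rate $\sqrt{k^2-\omega^2\upmu_0\epsilon_1^{\pm\infty}}$). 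The difference $\widetilde{T}-\widetilde{T}^{\pm\infty}$ is multiplication by a bounded matrix-valued function vanishing at $\pm\infty$ (the differences of $\omega\epsilon_1$ and $k^2/(\omega\epsilon_1)$ from their limits), and multiplication by a function vanishing at infinity is compact from $H^1(\R)$ to $L^2(\R)$ (approximate the multiplier by a compactly supported one and apply Rellich's theorem), so this difference is $\widetilde{T}^{\pm\infty}$-compact; the jump of $\epsilon_1$ at $x_1=0$ is localized and contributes only a compact perturbation as well. By the standard partition-of-unity (limiting-operator) decomposition of the essential spectrum of a first-order system on $\R$, $\sigma_{\mathrm{ess}}(\widetilde{T})=\sigma_{\mathrm{ess}}(\widetilde{T}^{+\infty})\cup\sigma_{\mathrm{ess}}(\widetilde{T}^{-\infty})$, which excludes $0$. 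Hence $\widetilde{T}$, and therefore $T_{k,\omega}$, is Fredholm, $\range{T_{k,\omega}}=N(T_{k,\omega})^\perp$, and recovering $v_1$ from the algebraic relation above yields the claimed $\bs v\in D(T_{k,\omega})$ (with $T_{k,\omega}$ boundedly invertible in case ii).

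I expect the main obstacle to be this last step, the identification of $\sigma_{\mathrm{ess}}(\widetilde{T})$ with the spectra of the two frozen limiting operators: because \eqref{E:ass-eps1-conv} provides no rate of convergence, one must argue through relative compactness rather than summable perturbations, and one has to accommodate the interface discontinuity at $x_1=0$ at the same time. A self-contained alternative avoiding the spectral machinery would be to construct $\bs v$ directly by variation of constants on each half-line using the exponential dichotomies found above and then match the two solutions across $x_1=0$; the matching is a finite-dimensional linear system that is solvable exactly up to the orthogonality constraint $\bs f\in N(T_{k,\omega})^\perp$, which is the ODE form of the Fredholm alternative.
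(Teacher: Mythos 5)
Your reduction of the $3\times 3$ system to a $2\times 2$ first-order ODE system for $(v_2,v_3)$, the identification of kernels and cokernels, and the use of self-adjointness plus the closed range theorem all match the paper. The difference lies in how Fredholmness of the reduced operator $\widetilde T_{k,\omega}$ is established. The paper cites a theorem of Ben-Artzi and Gohberg (Theorem 1.2 in the reference \texttt{ben1992dichotomy}): a first-order system $\partial_{x_1}\bs v=A(x_1)\bs v$ on $\R$ gives a Fredholm operator in $L^2$ if and only if it has exponential dichotomies on each half-line. They then verify the dichotomies directly from the eigenvalues $\pm\sqrt{k^2-\omega^2\upmu_0\epsilon_1^{\pm\infty}}$ of the frozen matrices, transferring them to the variable-coefficient half-line problems by Coppel's roughness theorem (which only needs $A_\pm(x_1)-A_{\pm\infty}\to 0$, no rate). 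Your ``self-contained alternative'' via variation of constants and a finite-dimensional matching at $x_1=0$ is, at bottom, a re-derivation of precisely this dichotomy $\Leftrightarrow$ Fredholm equivalence, so that route is fine.

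The route you put forward as primary, however, has a genuine gap where you yourself suspect it. The claim ``the difference $\widetilde T-\widetilde T^{\pm\infty}$ is multiplication by a bounded matrix-valued function vanishing at $\pm\infty$, hence $\widetilde T^{\pm\infty}$-compact'' is false whenever $\epsilon_1^{+\infty}\neq\epsilon_1^{-\infty}$: the coefficient difference $B(x_1)-B^{+\infty}$ tends to $0$ only as $x_1\to+\infty$, while it tends to the nonzero constant $B^{-\infty}-B^{+\infty}$ as $x_1\to-\infty$. Multiplication by such a function is \emph{not} compact from $H^1(\R)$ to $L^2(\R)$ — Rellich needs decay at both ends. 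The ``limited-jump-at-$0$ is a compact perturbation'' remark does not rescue this. So relative compactness against a single frozen operator cannot be used, and you must run the full parametrix/partition-of-unity construction comparing $\widetilde T$ to $\widetilde T^{+\infty}$, $\widetilde T^{-\infty}$ and a compactly truncated piece; you flag this but do not carry it out. That construction is standard but nontrivial (it is essentially equivalent to the Ben-Artzi/Gohberg result), and until it is made precise the chain ``$0\notin\sigma_{\mathrm{ess}}(\widetilde T)\Rightarrow$ Fredholm $\Rightarrow$ closed range'' is not closed. Two smaller points: your Fourier-symbol argument shows $\widetilde T^{\pm\infty}$ is boundedly invertible, which does give $0\notin\sigma(\widetilde T^{\pm\infty})$; and you are right that the pointwise statement $\omega\epsilon_1\neq 0$ must be read as uniform boundedness away from zero (as the paper implicitly does via \eqref{E:ass-eps1}) for the elimination $v_1=(f_1-kv_3)/(\omega\epsilon_1)$ to produce an $L^2$ function and for $k^2/(\omega\epsilon_1)$ to be bounded.
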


\begin{proof}
	Equation \eqref{E:general_inhomo_system} splits for $\omega \epsilon_1 \neq 0$ into the scalar equation 
	\begin{equation*}
		v_1 = \frac{1}{\omega \epsilon_1} \left(f_{1} - k v_3 \right)
		%\label{E:scalar_equation_p_1}
	\end{equation*}
	and the reduced problem
	\begin{equation*}
		\widetilde{T}_{k,\omega} \widetilde{\bs{v}} = \widetilde{\bs{f}}
		%\label{E:reduced_problem_inhom}
	\end{equation*}
	with
	\begin{equation*}
		\widetilde{T}_{k,\omega} := \begin{pmatrix}
			\omega \epsilon_1 & \ie \partial_{x_1} \\
			\ie \partial_{x_1} & \omega \upmu_0 - \frac{k^2}{\omega \epsilon_1 }
		\end{pmatrix}, \  D(\widetilde{T}_{k,\omega}) := H^1(\R)^2, \  
		\widetilde{\bs{v}} := \begin{pmatrix}
			v_2 \\ v_3 
		\end{pmatrix}, \ 
		\widetilde{\bs{f}} := \begin{pmatrix}
			f_{2} \\ f_{3} - \frac{k}{\omega \epsilon_1} f_{1} 
		\end{pmatrix}.
	\end{equation*}
	% and
	% \begin{equation*}
	% D(\widetilde{T}_{k,\omega}) := H^1(\R)^2.
	% \end{equation*}
	Note that 
	\begin{equation*}
		(v_2,v_3)^\top \in N(\widetilde{T}_{k,\omega}) \ \Longleftrightarrow \ \left(-\frac{k}{\omega \epsilon_1} v_3, v_2, v_3 \right)^\top \in N(T_{k,\omega})
	\end{equation*}
	and hence
	$$
	\widetilde{\bs{f}} \in N(\widetilde{T}_{k,\omega})^\perp \ \Longleftrightarrow \ \bs{f} \in N(T_{k,\omega})^\perp.
	$$
	We also obtain that $0 \in \sigma (\widetilde{T}_{k,\omega})$ if and only if $0 \in \sigma\left(T_{k,\omega} \right)$.
	
	Since $\widetilde{T}_{k,\omega}$ is self-adjoint, the result will follow from the closedness of the range of $\widetilde{T}_{k,\omega}$ and the closed range theorem, see e.g. \cite[Section 7]{yosida1980functional}.
	We check the closedness by showing that $\widetilde{T}_{k,\omega}$ is Fredholm. To this aim, we rewrite the problem 
	as the linear ordinary differential equation
	\begin{equation*}
		\partial_{x_1} \widetilde{\bs{v}} = A(x_1) \widetilde{\bs{v}} + \bs{g}
		%\label{E:reduced_problem_ode}
	\end{equation*}
	with 
	\begin{equation*}
		\bs{g} := - \ie \begin{pmatrix}
			f_{3} - \frac{k}{\omega \epsilon_1} f_{1} \\
			f_2
		\end{pmatrix},
	\end{equation*}
   \begin{equation*}
		A(x_1) := \begin{pmatrix}
			0 & \ie \left( \omega \upmu_0 - \frac{k^2}{\omega \epsilon_1(x_1)} \right) \\
			\ie\omega \epsilon_1(x_1) & 0 
		\end{pmatrix} =: \begin{cases}
			A_-(x_1), & x_1 < 0, \\
			A_+(x_1), & x_1 > 0.
		\end{cases}
	\end{equation*}
	Theorem 1.2 in \cite{ben1992dichotomy} says that $\widetilde{T}_{k,\omega}$ is Fredholm if and only if the ODEs
	\begin{align}
		\partial_{x_1} \widetilde{\bs{v}}^- &= A_-(x_1) \widetilde{\bs{v}}^-, \qquad x_1<0,
		\label{E:reduced_problem_ode_minus}\\
		\partial_{x_1} \widetilde{\bs{v}}^+ &= A_+(x_1) \widetilde{\bs{v}}^+, \qquad x_1>0,
		\label{E:reduced_problem_ode_plus}
	\end{align}
	have exponential dichotomies. We only show the dichotomy for \eqref{E:reduced_problem_ode_plus} as
	\eqref{E:reduced_problem_ode_minus} can be treated analogously. First, the problem
	\begin{equation*}
		\partial_{x_1} \bs{w} = A_{+\infty} \bs{w}
	\end{equation*}
	with the constant coefficient matrix
	\begin{equation*}
		A_{+ \infty}:= A(x_1 \rightarrow \infty) = \begin{pmatrix}
			0 & \ie \omega \upmu_0 - \frac{\ie k^2}{\omega \epsilon_1^{+ \infty}} \\
			\ie \omega \epsilon_1^{+ \infty} & 0
		\end{pmatrix}
	\end{equation*}
	has an exponential dichotomy since the eigenvalues 
	\begin{equation*}
		\lambda_{1,2} = \pm \sqrt{k^2 - \omega^2 \epsilon_1^{+\infty}\upmu_0}
	\end{equation*}
	of $A_{+ \infty}$ are real with different signs for $k^2 > \omega^2 \epsilon_1^{+\infty}\upmu_0$. Then
	Proposition 1 in Chapter 4 and the discussion starting on page 13 of \cite{coppel1971dichotomies} imply that also
	\begin{equation*}
		\partial_{x_1} \widetilde{\bs{v}}^+ = A_+(x_1) \widetilde{\bs{v}}^+ = \left(A_{+\infty} + \left(A_+(x_1) - A_{+ \infty}\right)\right) \widetilde{\bs{v}}^+
	\end{equation*}
	has an exponential dichotomy, because $A_+(x_1) - A_{+ \infty}$ tends to 0 as $x_1\to\infty$.
\end{proof}

Using the spectral information obtained above, we next show that the eigenvalues $\omega(k)$ and the corresponding eigenfunctions $w(k)$ are smooth in $k$.

\begin{kor} \label{cor:bsw}
	Let \eqref{E:ass-eps1}, \eqref{E:ass-eps1-conv}, \eqref{E:ass-eigvexist} and \eqref{E:ass-esssp} be true. Then for some $\delta > 0$ the eigenvalues and the corresponding eigenfunctions of problem \eqref{E:ev-prob} satisfy
	\begin{equation*}
		\omega \in C^\infty\left((k_0 - \delta, k_0 + \delta), \R \right) \text{ and } 
		\bs{w} \in C^\infty((k_0 - \delta, k_0 + \delta), L^2(\R) \times H^1(\R) \times H^1(\R)).
	\end{equation*}
\end{kor}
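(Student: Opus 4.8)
The plan is to recast the generalized eigenvalue problem \eqref{E:ev-prob} as an ordinary eigenvalue problem for a self-adjoint holomorphic family of operators and then invoke Kato's analytic perturbation theory. Since, by \eqref{E:ass-eps1} and $\upmu_0 > 0$, the multiplication operator $\Lambda$ is bounded, self-adjoint and bounded below, $\langle\bs{u},\bs{v}\rangle_\Lambda := \int_\R (\Lambda\bs{u})\cdot\overline{\bs{v}}\,\mathrm{d}x_1$ is an inner product on $L^2(\R)^3$ equivalent to the standard one. Writing $M(k) := \Lambda^{-1}L(k)$ with $D(M(k)) = D(L(k))$, problem \eqref{E:ev-prob} becomes $M(k)\bs{w} = -\omega\bs{w}$. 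By \eqref{E:L} the domain $D(L(k))$ does not depend on $k$, and $L(k) = L(k_0) + (k-k_0)B$ with the bounded self-adjoint operator $B\bs{w} := (w_3,0,w_1)^\top$; hence $M(k) = M(k_0) + (k-k_0)\Lambda^{-1}B$ with $\Lambda^{-1}B$ bounded. As $L(k_0)$ is self-adjoint, $M(k_0)$ is self-adjoint on $(L^2(\R)^3,\langle\cdot,\cdot\rangle_\Lambda)$ and therefore so is $M(k)$ for every $k\in\R$. Letting $k$ vary in a complex neighbourhood of $k_0$, the family $(M(k))_k$ is thus self-adjoint holomorphic of type (A) in the sense of Kato (the domain is fixed and $k\mapsto M(k)$ is even affine).

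The decisive step is to verify that $-\nu_0$ is an isolated eigenvalue of $M(k_0)$ of multiplicity one. That it is simple and isolated from the remaining \emph{point} spectrum of $M(k_0)$ is precisely \eqref{E:ass-eigvexist}. To show it is also separated from the essential spectrum I would use \eqref{E:ass-esssp}: because $\nu_0\epsilon_1 = \omega(k_0)\epsilon_1\neq0$ and $k_0^2 > \nu_0^2\upmu_0\epsilon_1^{\pm\infty}$, the reduction (which needs $\omega\epsilon_1\neq 0$) together with the exponential-dichotomy argument (which needs $k^2>\omega^2\upmu_0\epsilon_1^{\pm\infty}$) in the proof of Lemma~\ref{L:inhom-prob}, applied with $(k,\omega)=(k_0,\nu_0)$, shows that $T_{k_0,\nu_0} = L(k_0) + \nu_0\Lambda$ is Fredholm. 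Since $\Lambda^{-1}$ is boundedly invertible, $M(k_0) + \nu_0 I = \Lambda^{-1}T_{k_0,\nu_0}$ is then Fredholm as well, i.e.\ $-\nu_0\notin\sigma_{\mathrm{ess}}(M(k_0))$. As $\sigma_{\mathrm{ess}}$ is closed, there is a disk around $-\nu_0$ disjoint from $\sigma_{\mathrm{ess}}(M(k_0))$, and, shrinking it by means of \eqref{E:ass-eigvexist}, it meets $\sigma(M(k_0))$ only in the simple eigenvalue $-\nu_0$.

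With this in hand I would apply Kato's selection theorem for self-adjoint holomorphic families (the Rellich-type result; see Kato, \emph{Perturbation Theory for Linear Operators}, Ch.~VII, or Reed--Simon, vol.~IV): for some $\delta>0$ there are real-analytic maps $k\mapsto\lambda(k)\in\R$ and $k\mapsto\bs{\psi}(k)\in L^2(\R)^3\setminus\{\bs{0}\}$ on $(k_0-\delta,k_0+\delta)$ with $\lambda(k_0)=-\nu_0$, $M(k)\bs{\psi}(k)=\lambda(k)\bs{\psi}(k)$, and (after shrinking $\delta$) $\lambda(k)$ the only spectral point of $M(k)$ in a fixed neighbourhood of $-\nu_0$. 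Setting $\omega(k):=-\lambda(k)$ reproduces the eigenvalue curve of \eqref{E:ass-eigvexist} and gives $\omega\in C^\infty((k_0-\delta,k_0+\delta),\R)$. For the eigenfunctions, the identity $L(k_0)\bs{\psi}(k) = \lambda(k)\Lambda\bs{\psi}(k) - (k-k_0)B\bs{\psi}(k)$ shows that $k\mapsto L(k_0)\bs{\psi}(k)$ is real-analytic with values in $L^2(\R)^3$; since on $D(L(k_0))$ the graph norm of $L(k_0)$ is equivalent to the norm of $L^2(\R)\times H^1(\R)\times H^1(\R)$ (directly from \eqref{E:L}, bounding $\|\partial_{x_1}w_2\|_{L^2}$ by $\|k_0 w_1+\ie\partial_{x_1}w_2\|_{L^2}+|k_0|\,\|w_1\|_{L^2}$ and similarly for the other components), $\bs{\psi}$ is real-analytic into that space. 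Dividing by the real-analytic scalar factor $\bigl(\int_\R \bs{\psi}(k)^\top\Lambda\overline{\bs{\psi}(k)}\,\mathrm{d}x_1\bigr)^{1/2}$ to impose the normalization \eqref{E:w-normaliz} yields the normalized eigenfunction $\bs{w}(k)$ with the claimed $C^\infty$ (in fact real-analytic) dependence on $k$.

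I expect the only genuinely delicate point to be the separation of $-\nu_0$ from the essential spectrum in the second step; everything else is a routine application of analytic perturbation theory for a bounded perturbation on a fixed domain, together with the elementary equivalence of the graph norm of $L(k_0)$ with the norm of $L^2(\R)\times H^1(\R)\times H^1(\R)$.
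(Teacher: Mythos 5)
Your proof is correct and follows essentially the same route as the paper: both identify $\nu_0$ as an isolated simple eigenvalue of $-\Lambda^{-1}L(k_0)$ by combining \eqref{E:ass-eigvexist} with the Fredholm (exponential-dichotomy) argument from the proof of Lemma~\ref{L:inhom-prob}, and then invoke Kato's analytic perturbation theory for an affine-in-$k$ family on a fixed domain. You merely make explicit a few points the paper leaves implicit — the self-adjoint type-(A) structure under the $\Lambda$-weighted inner product, the Fredholmness of the full $T_{k_0,\nu_0}$ instead of perturbing $\omega$ about $\nu_0$ in $\widetilde{T}_{k_0,\omega}$, and the graph-norm argument upgrading the smoothness of $k\mapsto\bs{w}(k)$ from $L^2(\R)^3$ to $L^2(\R)\times H^1(\R)\times H^1(\R)$ — but the underlying argument is the same.
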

\begin{proof}
	We translate our problem into standard perturbation theory of spectra, as discussed in \cite{kato1995perturbation}.
	By the assumptions, $\nu_0$ is a simple eigenvalue of $-\Lambda^{-1} L(k_0)$ with eigenfunction $\bs{w}(k_0)$, 
	and there are no other eigenvalues nearby. As shown in the proof of Lemma~\ref{L:inhom-prob}, $\omega$ belongs to the 
	resolvent set of $-\Lambda^{-1} L(k_0)$ if and only if $\widetilde{T}_{k_0,\omega}$ is invertible. For 
	$\omega \approx \nu_0$ we can write $\widetilde{T}_{k_0,\omega}=\widetilde{T}_{k_0,\nu_0} + R$ with a 
	perturbation $R:L^2(\R)\to L^2(\R)$, whose norm is bounded by $c\,|\omega-\nu_0|$. In the proof of Lemma \ref{L:inhom-prob} we have seen that $\widetilde{T}_{k_0,\nu_0}$ is a Fredholm operator, hence $0\notin \sigma_{\rm ess}(\widetilde{T}_{k_0,\nu_0})$ and the same is true for 
	$\widetilde{T}_{k_0,\omega}$ if $\omega$ is close to $\nu_0$. If 0 was an eigenvalue of 
	$\widetilde{T}_{k_0,\omega}$, the number $\omega\neq \nu_0$ would be an eigenvalue of $-\Lambda^{-1} L(k_0)$ 
	which is impossible in a small enough neighborhood of $\nu_0$ by assumption \eqref{E:ass-eigvexist}. As a result, $0$ is contained in $\rho(\widetilde{T}_{k_0,\omega})$ and thus $\nu_0$ is an isolated simple 
	eigenvalue of $-\Lambda^{-1} L(k_0)$.
	
	For $k\approx k_0$, Theorem~1.8 in \S VII.1 of \cite{kato1995perturbation} now shows that $-\Lambda^{-1} L(k)$
	has a simple eigenvalue $\omega(k)$ smoothly depending on $k$. Also the projection $P(k)$ onto the eigenspace
	is smooth in $k$. Hence, the mapping $k \mapsto P(k)\bs{w}(k_0)$ is a smooth family of eigenfunctions of \eqref{E:ev-prob} if $k$ is close to $k_0$.
\end{proof}

The next lemma improves the regularity of solutions to \eqref{E:general_inhomo_system} if the right-hand side is smooth enough. 

\begin{lem}
	\label{lemma_regularity_of_eigenfunction}
	Assume \eqref{E:ass-eps1} and \eqref{E:ass-esssp}. Let $k, \omega \in \R$, $\bs{f} := \left(f_1,f_2,f_3 \right)^\top$ with $f_1 \in \mathcal{H}^3(\R)$ and $f_2,f_3 \in \mathcal{H}^2(\R)$. If $\bs{v} \in L^2(\R)^3$ is a solution of \eqref{E:general_inhomo_system}, then $\bs{v} \in \mathcal{H}^3(\R)^3$.
\end{lem}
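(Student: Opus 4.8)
The plan is to bootstrap the regularity of $\bs v$ from the mere membership $\bs v\in L^2(\R)^3$, reusing the reduction carried out in the proof of Lemma~\ref{L:inhom-prob}.

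First I would record the consequences of the hypotheses. Assumption \eqref{E:ass-esssp} gives $\omega\epsilon_1\neq 0$ and hence $\omega\neq 0$; combined with \eqref{E:ass-eps1} (namely $\epsilon_1^\pm\ge\epsilon_{1,m}^\pm>0$ and $\epsilon_1^\pm\in C^3(\R_\pm)\cap W^{3,\infty}(\R_\pm)$), the functions $\tfrac{1}{\omega\epsilon_1}$ and $\tfrac{k}{\omega\epsilon_1}$ lie in $\cW^{3,\infty}(\R)$, with $\tfrac{1}{\omega\epsilon_1}$ moreover bounded away from $0$ on each half-line. The first row of \eqref{E:general_inhomo_system} then gives $v_1=\tfrac{1}{\omega\epsilon_1}(f_1-kv_3)$, and the remaining two rows reduce, exactly as in the proof of Lemma~\ref{L:inhom-prob}, to $\widetilde T_{k,\omega}\widetilde{\bs v}=\widetilde{\bs f}$ with $\widetilde{\bs v}=(v_2,v_3)^\top$ and $\widetilde{\bs f}=(f_2,\,f_3-\tfrac{k}{\omega\epsilon_1}f_1)^\top$. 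Since $f_1\in\cH^3(\R)\subset\cH^2(\R)$, $f_2,f_3\in\cH^2(\R)$ and $\tfrac{k}{\omega\epsilon_1}\in\cW^{2,\infty}(\R)$, the product estimates of Appendix~\ref{A:products} yield $\widetilde{\bs f}\in\cH^2(\R)^2$.

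Next I would rewrite $\widetilde T_{k,\omega}\widetilde{\bs v}=\widetilde{\bs f}$ as the first-order ODE $\pa_{x_1}\widetilde{\bs v}=A(x_1)\widetilde{\bs v}+\bs g$ on each half-line $\R_\pm$ separately, with the coefficient matrix $A$ from the proof of Lemma~\ref{L:inhom-prob} — whose entries are built from $\epsilon_1$, $1/\epsilon_1$ and constants and hence belong to $\cW^{3,\infty}(\R)$ — and $\bs g=-\ie(f_3-\tfrac{k}{\omega\epsilon_1}f_1,\,f_2)^\top\in\cH^2(\R)^2$. Now one iterates: from $\widetilde{\bs v}^\pm\in L^2(\R_\pm)$ the right-hand side is in $L^2(\R_\pm)$, so $\pa_{x_1}\widetilde{\bs v}^\pm\in L^2(\R_\pm)$ and thus $\widetilde{\bs v}^\pm\in H^1(\R_\pm)$; then $A\widetilde{\bs v}^\pm\in H^1$ and $\bs g^\pm\in H^1$, whence $\widetilde{\bs v}^\pm\in H^2(\R_\pm)$; one more step, now exploiting the full $\bs g^\pm\in H^2(\R_\pm)$, gives $\widetilde{\bs v}^\pm\in H^3(\R_\pm)$, i.e.\ $v_2,v_3\in\cH^3(\R)$. (Each step only uses that multiplication by a $W^{j,\infty}$ function preserves $H^j$.) A further iteration produces no gain because $\bs g$ is only in $\cH^2$, which is why the conclusion is exactly $\cH^3$.

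Finally, substituting $v_3\in\cH^3(\R)$ and $f_1\in\cH^3(\R)$ back into $v_1=\tfrac{1}{\omega\epsilon_1}(f_1-kv_3)$ and using $\tfrac{1}{\omega\epsilon_1}\in\cW^{3,\infty}(\R)$ with the product estimates gives $v_1\in\cH^3(\R)$, so $\bs v\in\cH^3(\R)^3$. There is no serious obstacle here; the only thing to be careful about is the bookkeeping of the product and bootstrap estimates in the half-space spaces $\cH^m$ — one must keep the ODE posed separately on $\R_-$ and $\R_+$, where $A$ is genuinely $W^{3,\infty}$ and no interface terms appear, and one must verify that the gain of regularity terminates at order three, dictated by the regularity $\widetilde{\bs f}\in\cH^2$ of the reduced right-hand side.
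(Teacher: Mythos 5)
Your proof is correct and follows essentially the same bootstrap strategy as the paper: both turn the algebraic first row of \eqref{E:general_inhomo_system} into $v_1=\tfrac{1}{\omega\epsilon_1}(f_1-kv_3)$ and iterate the first-order ODE structure in $x_1$ to raise the regularity one order at a time, with the gain terminating at $\cH^3$ because $f_2,f_3\in\cH^2$ and $\epsilon_1^{-1}\in\cW^{3,\infty}$. The only (cosmetic) difference is bookkeeping: you eliminate $v_1$ up front so the bootstrap runs on the reduced $2\times2$ system $\widetilde{T}_{k,\omega}$ and $v_1$ is recovered at the very end, whereas the paper alternates $v_2,v_3$ and $v_1$ within each iteration; the underlying estimates are identical.
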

\begin{proof}
	We start by showing that $\bs{v} \in \mathcal{H}^1(\R)$. From \eqref{E:general_inhomo_system} we know that
	\begin{equation}
		\left\{
		\begin{aligned}
			\ie \partial_{x_1} v_2 &= f_3 - k v_1 - \upmu_0 \omega v_3, \\
			\ie \partial_{x_1} v_3 &= f_2 - \epsilon_1 \omega v_2.
		\end{aligned}
		\right.
		\label{regularity_eigenvalue_problem_first_part}
	\end{equation}
	The right-hand sides in \eqref{regularity_eigenvalue_problem_first_part} belong to $L^2(\R)$ and therefore $v_2,v_3 \in \mathcal{H}^1(\R)$.
	The assumptions on $\epsilon_1$ imply that $\partial_{x_1}\left(\epsilon_1^{-1}\right) = -\epsilon_1^{-2} \partial_{x_1} \epsilon_1 \in L^\infty(\R_\pm)$. Now $v_1 \in \mathcal{H}^1(\R)$ is a direct consequence of 
	\begin{equation}
		v_1 = \frac{1}{\epsilon_1 \omega}\left(f_1 - k v_3 \right).
		\label{regularity_eigenvalue_problem_second_part}
	\end{equation}
	We can now iterate this process since $\epsilon_1^{-1} \in \mathcal{W}^{3,\infty}(\R)$. Equations \eqref{regularity_eigenvalue_problem_first_part} and \eqref{regularity_eigenvalue_problem_second_part} yield that $\bs{v} \in \mathcal{H}^2(\R)^3$ 
	if one knows that $\bs{v} \in \mathcal{H}^1(\R)^3$. This fact then implies that $\bs{v} \in \mathcal{H}^3(\R)^3$.
\end{proof}

%----------------------------------------------------------------------
\section{Envelope approximation of wave-packets; amplitude equation}\label{S:asymp}

The aim of this section is to make the residual
\begin{equation}
	\Res(\Uans) := \begin{pmatrix}
		\partial_t \mathcal{D}_1(\bs{U}_{\mathrm{ans},E}) - \partial_{x_2} \Uansthree \\
		\partial_t \mathcal{D}_2(\bs{U}_{\mathrm{ans},E}) + \partial_{x_1} \Uansthree \\
		- \partial_{x_2} \Uansone + \partial_{x_1} \Uanstwo + \upmu_0 \partial_t \Uansthree
	\end{pmatrix}
	\label{E:res}
\end{equation}
of \eqref{E:Uans} in the Maxwell problem \eqref{E:Maxw-red}
small enough for the subsequent justification of the asymptotics, i.e., for the proof of Theorem \ref{T:main}. As we will see, this requires an extension of the ansatz \eqref{E:Uans}.
In the propagation direction $x_2$ we mostly work in Fourier variables applying the Fourier transform 
$$\widehat{f}(k) = \mathcal{F}(f)(k) := (2\uppi)^{-1/2} \int_\R f(x) \e^{-\ie kx}\,\mathrm{d} x.$$
The corresponding inverse transform is given by 
$$\mathcal{F}^{-1}(f)(x) := (2\uppi)^{-1/2} \int_\R f(k) \e^{\ie kx}\,\mathrm{d} k.$$
Indeed, it is $\mathcal{F}\circ \mathcal{F}^{-1}=\mathcal{F}^{-1}\circ \mathcal{F}=\text{Id}:L^2(\R)\to L^2(\R)$ after the standard extension of the transforms from $L^1(\R)$ to $L^2(\R)$.

For the wave-packet $\Uans$ we compute
\begin{equation}
	\Uanshat\left(x_1,k, t\right) :=
	\begin{pmatrix}
		\widehat{\mathcal{E}}_{{\rm ans},1}\left(x_1,k, t\right) \\ \widehat{\mathcal{E}}_{{ \rm ans},2}\left(x_1,k, t\right) \\ \widehat{\mathcal{H}}_{{ \rm ans},3}\left(x_1,k, t\right) \end{pmatrix} = \widehat{A}\left(\frac{k-k_0}{\varepsilon},\varepsilon^2t \right) \bs{m}\left(x_1\right) \e^{-\ie\left( \nu_0 + \left(k-k_0\right)\nu_1\right)t} + \widehat{\cc},
	\label{E:uans_hat}
\end{equation}
where $\widehat{\cc}(\widehat{f}) = \widehat{\cc(f)}$. Maxwell's equations with the reduction \eqref{E:reduce} transform to
\begin{equation}
	\left\{
	\begin{aligned}
		\partial_t \widehat{\mathcal{D}}_1 - \ie k \widehat{\mathcal{H}}_3 &= 0, \\
		\partial_t \widehat{\mathcal{D}}_2 + \partial_{x_1}\widehat{\mathcal{H}}_3 &= 0, \\
		-\ie k \widehat{\mathcal{E}}_1 + \partial_{x_1} \widehat{\mathcal{E}}_2 + \upmu_0 \partial_t \widehat{\mathcal{H}}_3 &= 0,
	\end{aligned} \right.
	\label{E:Maxw-reduce-FT} 
\end{equation}
with
\begin{equation*}
	\begin{aligned}
		\widehat{\bs{\mathcal{D}}}(\bs{\mathcal{E}}) &= \epsilon_1 \widehat{\bs{\mathcal{E}}} + \epsilon_3 \left(\left(\bs{\mathcal{E}}\cdot \bs{\mathcal{E}}\right) \bs{\mathcal{E}}\right)^\land\\ 
	\end{aligned}.
\end{equation*}
In what follows we use the notations $E_1 := \e^{-\ie\left(\nu_0 + \left(k-k_0\right)\nu_1\right)t}$, $F_1 := \e^{\ie\left(k_0x_2 - \nu_0t \right)}$, $K := \eps^{-1}(k-k_0)$, $T := \varepsilon^2 t$ and $X_2 := \varepsilon\left(x_2 - \nu_1t\right)$,
and we will suppress the arguments of $\bs{m} = \bs{m}(x_1)$ and $\widehat{A} = \widehat{A}(K,T)$ and their derivatives if they are obvious. 

We start our formal asymptotic analysis by writing out the nonlinear term, where we employ the notation $\bs\cD=\bs\cD_{{\rm lin}}+\bs\cD_{{\rm nl}}$ with
$$
\bs{\cD}_{{\rm lin}}:= \epsilon_1 \bs{\mathcal{E}}, \quad 
\bs{\cD}_{{\rm nl}}:= \epsilon_3 (\bs{\mathcal{E}}\cdot \bs{\mathcal{E}})\bs{\mathcal{E}}.
$$
In the physical variables we get
\begin{align*}
	\partial_t \mathcal{D}_{{\rm nl},1}(\bs{U}_{\mathrm{ans},E}) = &-\varepsilon^3 3 \ie \epsilon_3 \nu_0 F_1^3 A^3 \left(m_1^3 + m_1 m_2^2 \right)\\
	&-\varepsilon^3 \ie \epsilon_3 \nu_0 F_1 |A|^2A \left(3m_1^3- m_1 m_2^2\right) + \mathcal{O}(\varepsilon^4) + \cc \quad (\varepsilon \rightarrow 0),
\end{align*}
using that $m_{1}$ is real and $m_2$ is imaginary. As one easily checks,
\begin{equation*}
	\begin{aligned}
		\int_\R \e^{\ie\left(k_0x_2 - \nu_0t\right)} |A\left(X_2,T\right)|^2A\left(X_2,T\right)\e^{-\ie k x_2} \,\mathrm{d}x_2 &= \tfrac{1}{\sqrt{2 \uppi}} \varepsilon^{-1}E_1 \left(\widehat{A} * \widehat{\overline{A}} * \widehat{A}\right)\left(K,T\right),\\
		\int_\R \e^{3\ie(k_0x_2-\nu_0t)}A^3(X_2,T)\e^{-\ie kx_2}\,\mathrm{d}x_2 &=\tfrac{1}{\sqrt{2 \uppi}} \varepsilon^{-1} E_3 \left(\widehat{A} * \widehat{A} * \widehat{A}\right)\left(\Ktil,T\right),
	\end{aligned}
\end{equation*}
with $\Ktil:=\frac{k-3k_0}{\eps}$, $E_3 := \e^{-\ie(3\nu_0+(k-3k_0)\nu_1)t}$, and the convolution
\begin{equation*}
	(f * g)(K) = \int_\R f(K-s)g(s)\,\mathrm{d}s.
\end{equation*}
Hence, as $\eps \to 0$ we have
\begin{equation}\label{E:Dnl-Uans}
\begin{aligned}
	\partial_t \widehat{\mathcal{D}}_{{\rm nl},1}(\bs{U}_{\mathrm{ans},E}) = &-\varepsilon^2 \tfrac{\ie}{\sqrt{2 \uppi}} \epsilon_3 \nu_0 E_1 \left(3m_1^3- m_2^2 m_1 \right)\left(\widehat{A} * \widehat{\overline{A}} * \widehat{A}\right)(K,T)\\
	&-\varepsilon^2 \tfrac{3\ie}{\sqrt{2 \uppi}}\epsilon_3 \nu_0 E_3 \left(m_1^3 + m_1 m_2^2\right) \left(\widehat{A} * \widehat{A} * \widehat{A}\right)(\Ktil,T) + \mathcal{O}(\varepsilon^3) + \widehat{\cc}.
\end{aligned}
\end{equation}
The second component $\pa_t\widehat{\mathcal{D}}_{{\rm nl},2}(\bs{U}_{\mathrm{ans},E})$ is obtained analogously and the third component $\widehat{\mathcal{D}}_{{\rm nl},3}(\bs{U}_{\mathrm{ans},E})$ obviously vanishes.

Below we use the Taylor expansion of $\omega(k)$, of the corresponding eigenfunction $\bs{w}(k)$, see Corollary~\ref{cor:bsw}, 
and of the operator $L(k)$. Recalling \eqref{E:nus} and $k = k_0 + \varepsilon K$, we obtain
\begin{align*}
	\omega\left(k\right) &= \omega\left(k_0 + \varepsilon K\right) = \nu_0 + \varepsilon K \nu_1 + \frac{1}{2}\varepsilon^2 K^2 \nu_2 + \mathcal{O}(\varepsilon^3),\\
	\bs{w}\left(k\right) &= \bs{w}\left(k_0 + \varepsilon K\right) = \bs{m} + \varepsilon K \partial_k \bs{w}(k_0) + \frac{1}{2}\varepsilon^2 K^2 \partial_k^2 \bs{w}(k_0) + \mathcal{O}(\varepsilon^3),\\
	L\left(k\right) &= L\left(k_0 + \varepsilon K\right) = L(k_0)+\eps K \pa_k L(k_0)=L_0+\eps K L_1
\end{align*}
as $\varepsilon \rightarrow 0$, where 
\begin{equation*}
	L_1 \bs{m}:= \left(\partial_k L(k_0)\right)\bs{m} = \begin{pmatrix}
		m_3 \\ 
		0\\
		m_1 
	\end{pmatrix}
\end{equation*} 
and all higher derivatives in $k$ of $L$ vanish since it is linear in $k$.
Differentiation of the linear eigenvalue problem \eqref{E:ev-prob} then produces the equations
\begin{align}
	\left(L_0 + \nu_0 \Lambda\right)\bs{m}&=\bs{0},\label{E:ev-prob-Tayl1}\\
	K \left(L_1 + \nu_1 \Lambda\right)\bs{m}+K \left(L_0 + \nu_0 \Lambda\right) \partial_k \bs{w}(k_0)&=\bs{0},\label{E:ev-prob-Tayl2}\\
	K^2\nu_2 \Lambda \bs{m}+2K^2\left(L_1 + \nu_1 \Lambda\right)\partial_k \bs{w}(k_0)+ K^2(L_0+\nu_0 \Lambda)\partial_k^2 \bs{w}(k_0)&=\bs{0}.\label{E:ev-prob-Tayl3}
\end{align}

The residual is obtained by inserting \eqref{E:uans_hat} in the left-hand side of \eqref{E:Maxw-reduce-FT}. We obtain at $\mathcal{O}(\varepsilon^0)$ the expression $- \ie \widehat{A}E_1(L_0+\nu_0 \Lambda)\bs{m}$ which vanishes due to \eqref{E:ev-prob-Tayl1}. 
At $\mathcal{O}(\varepsilon^1)$ we obtain $-\ie K \Ahat E_1 (L_1 + \nu_1 \Lambda)\bs{m}$. In order to annihilate
the residual also at $\mathcal{O}(\eps^1)$, equation \eqref{E:ev-prob-Tayl2} dictates that we need to extend the ansatz $\Uanshat$ by the term $\eps\Ahat K\pa_k\bs{w}(k_0)E_1$. 

At $\mathcal{O}(\eps^2)$ terms proportional to $E_1$ and those proportional to $E_3$ (as obtained in \eqref{E:Dnl-Uans}) appear in the residual. The latter terms can be removed by introducing a further correction term to $\Uanshat$, namely $\varepsilon^2 (2\uppi)^{-1} \left(\widehat{A} * \widehat{A} * \widehat{A}\right)\bs{h}(x_1) E_3,$ where $\bs{h}$ solves
\begin{equation}
	(L(3k_0) +3\nu_0 \Lambda) \bs{h}=- 3\nu_0\epsilon_3 \begin{pmatrix} m_1^3+m_1m_2^2\\ 
		m_2^3+m_2m_1^2\\
		0
	\end{pmatrix}.
	\label{E:system_h}
\end{equation}
The non-resonance assumption \eqref{E:nres-cond} guarantees that a solution $\bs{h}$ exists. Indeed, $L(3k_0)+3\nu_0 \Lambda$ is injective by \eqref{E:nres-cond} 
and hence the closed range theorem implies $R(L(3k_0)+3\nu_0 \Lambda)=N(L(3k_0)+3\nu_0 \Lambda)^\perp =L^2(\R)^3$,
thanks to Lemma \ref{L:inhom-prob} (case ii) with $k=3k_0$ and $\omega=3\nu_0.$ Here Assumption \eqref{E:ass-esssp} is used. That the right-hand side in \eqref{E:system_h} is in $L^2(\R)^3$ follows from $\bs{m} \in \mathcal{H}^3(\R)^3$, see \eqref{E:ass2-Amwk} and Lemma \ref{L:regularity_ef}.

In summary, the residual is in $\mathcal{O}(\eps^2)$ and contains only terms proportional to $E_1$ (and their complex conjugates) if we modify $\Uanshat$ to 
\begin{equation}\label{eq:umod}
	\begin{aligned}
		\Umodhat\left(x_1,k,t\right) &:= \widehat{A}\left(\frac{k-k_0}{\varepsilon},\varepsilon^2t \right) \left(\bs{m}\left(x_1\right) + \varepsilon K \partial_k \bs{w}\left(x_1,k_0\right)\right) \e^{-\ie\left(\nu_0 + \left(k-k_0\right)\nu_1 \right)t} \\
		&\ +\varepsilon^2 \tfrac{1}{\sqrt{2 \uppi}} \left(\widehat{A} * \widehat{A} * \widehat{A}\right)\left(\frac{k-3k_0}{\varepsilon},\varepsilon^2t \right)\bs{h}(x_1) \e^{-\ie(3 \nu_0 + (k-3k_0)\nu_1)t} + \widehat{\cc}.
	\end{aligned}
\end{equation}
As a result, the $\mathcal{O}(\eps^2)$-terms in the residual of $\Umodhat$ are 
$$\eps^2\Reshat^{(2,E_1)}E_1+\widehat{\cc},$$ 
where
\begin{equation}\label{E:resE1}
	\begin{aligned}
		\Reshat^{(2,E_1)}(x_1,k&,t) :=-\ie \left[K^2\Ahat(K,T)(L_1+\nu_1 \Lambda)\pa_k\bs{w}(x_1,k_0) +\ie \pa_T \Ahat(K,T) \Lambda\bs{m}(x_1) \right.\\
		&\left. +\nu_0\epsilon_3(x_1)(2\uppi)^{-1}(\Ahat*\Abarhat*\Ahat)(K,T)\begin{pmatrix}3m_1^3(x_1)-m_1(x_1)m_2^2(x_1)\\ -3m_2^3(x_1)+m_1^2(x_1)m_2(x_1)\\0 \end{pmatrix}\right],
	\end{aligned}
\end{equation}
again recalling that $m_{1,3}$ are real and $m_2$ is imaginary. 

To derive the amplitude equation for the envelope approximation it will be sufficient that the $L^2$-projection of $\Reshat^{(2,E_1)}$ onto $N(L_0+\nu_0 \Lambda) = \operatorname{span}\{\bs{m}\}$ vanishes, i.e.,
$$P_{\bs{m}}\bs{f}\left(x_1\right) := \langle \bs{f},\bs{m} \rangle_{L^2\left(\R\right)^3} \bs{m}\left(x_1\right)= \int_\R \bs{f}\left(\xi_1 \right) \cdot \overline{\bs{m}} \left(\xi_1\right)\,\mathrm{d}\xi_1 \; \bs{m}\left(x_1\right) = 0$$
with $\bs{f} = \Reshat^{(2,E_1)}$. This is equivalent to the condition that the envelope $A$ satisfies a certain nonlinear Schr\"odinger equation, as we show now. Note that for the complete removal of $\Reshat^{(2,E_1)}$ a further extension of the ansatz is necessary, see Section \ref{S:res}. We
use \eqref{E:ev-prob-Tayl3} and replace $K^2(L_1+\nu_1 \Lambda)\partial_k \bs{w}(k_0)$ in \eqref{E:resE1} by 
\[-\tfrac{1}{2}K^2\nu_2 \Lambda \bs{m}-\tfrac{1}{2}K^2(L_0+\nu_0 \Lambda)\partial_k^2 \bs{w}(k_0).\]
The self-adjointness of $L_0+\nu_0 \Lambda$ implies
$$P_{\bs{m}}\left(-\frac{1}{2}K^2\nu_2 \Lambda \bs{m}-\frac{1}{2}K^2(L_0+\nu_0 \Lambda)\partial_k^2 \bs{w}(k_0)\right)=-\frac{1}{2}K^2\nu_2 P_{\bs{m}}(\Lambda \bs{m})=-\frac{1}{2}K^2\nu_2 \bs{m}$$
due to the normalization of $\bs{m}$. Altogether, $P_{\bs{m}}$ of \eqref{E:resE1} is zero if $\widehat{A}$ satisfies
\begin{equation}
	\begin{aligned}
		0 &= \ie \partial_T \widehat{A} - \frac{1}{2} K^2 \nu_2 \widehat{A} + (2\uppi)^{-1} \nu_0 \int_\R \epsilon_3 \left( 3 m_1^4 - 2 m_1^2 m_2^2 + 3 m_2^4 \right)\,\mathrm{d}x_1 \left( \widehat{A} * \widehat{\overline{A}} * \widehat{A}\right).
	\end{aligned} 
	\label{E:NLS-FT}
\end{equation}
In other words, $A$ has to solve the nonlinear Schr\"odinger equation
\begin{equation}
	\ie \partial_T A = - \frac{1}{2} \nu_2 \partial_{X_2}^2 A + \kappa |A|^2 A
	\label{E:NLS}
\end{equation}
with
\begin{equation*}
	\kappa := - \nu_0 \int_\R \epsilon_3 \left(3 m_1^4 - 2 m_1^2 m_2^2 + 3 m_2^4 \right)\,\mathrm{d}x_1.
\end{equation*}
Here \eqref{E:w-normaliz2} has been used. Equation \eqref{E:NLS} is the so-called effective amplitude equation. Note that for Theorem \ref{T:main} we need smooth solutions $A \in \bigcap_{k=0}^4 C^{4-k}([0,T_0],H^{3+k}(\R))$ of \eqref{E:NLS}. Such solutions are provided by Proposition 3.8 and Remark 3.9 in \cite{tao2006nonlinear}.

%----------------------------------------------------------------------------------------------------
\section{Estimation of the residual}\label{S:res}

In this section we estimate the residual in the $\mathcal{H}^3(\R^2)^3$-norm rigorously under the assumption that $A$ solves \eqref{E:NLS}. Here $\Res$ is obtained from $\Reshat$ by applying the inverse Fourier transformation. The modified wave-packet $\Umod$ 
from \eqref{eq:umod} has the residual $\Reshat(\Umodhat) =\eps^2(I-P_m)\Reshat^{(2,E_1)}E_1+\mathcal{O}(\eps^3)$ as shown in Section \ref{S:asymp}. 
Since $\Reshat^{(2,E_1)}$ only depends on $K=\eps^{-1}(k-k_0)$ (and not directly on $k$), see \eqref{E:resE1}, we have formally $\Res^{(2,E_1)}=\mathcal{O}(\eps)$ and therefore formally $\Res(\Umod) = \mathcal{O}(\varepsilon^3)$. The $L^2$-norm is bounded by 
\begin{equation*}
	\|\Res(\Umod)(\cdot,\cdot,t)\|_{L^2(\R^2)^3}\leq C\eps^{\frac{5}{2}}
\end{equation*}
due to the presence of functions depending on $X_2=\eps(x_2-\nu_1 t)$. The loss of the half power of $\eps$ is clear from the substitution $X_2=\eps(x_2-\nu_1 t)$ in the integral of the $L^2$-norm.

It turns out that for the error analysis the residual has to be bounded in the $\mathcal{H}^3(\R^2)^3$-norm by $C \varepsilon^{7/2}$ for all $t\in [0,T_0\eps^{-2}]$, see the estimates of the residual in Section \ref{S:pf-main} especially \eqref{E:estimate_integral_residual}. This requires a further extension of the ansatz. 
We introduce the final modification of the wave-packet by
\begin{align*}
	\Uexthat\left(x_1,k,t\right) &:= \widehat{A}\left(\frac{k-k_0}{\varepsilon},\varepsilon^2t \right) \left(\bs{m}\left(x_1\right) + \varepsilon K \partial_k \bs{w}\left(x_1,k_0\right)\right) \e^{-\ie\left(\nu_0 + \left(k-k_0\right)\nu_1 \right)t} \\
	&\quad \ +\varepsilon^2 (2\uppi)^{-1} \left(\widehat{A} * \widehat{A} * \widehat{A}\right)\left(\frac{k-3k_0}{\varepsilon},\varepsilon^2t \right)\bs{h}(x_1) \e^{-\ie(3 \nu_0 + (k-3k_0)\nu_1)t} \\
	&\quad \ + \varepsilon^2 \widehat{A}\left(\frac{k-k_0}{\varepsilon},\varepsilon^2t \right) \frac{1}{2} K^2 \partial_k^2 \bs{w}\left(x_1,k_0\right) \e^{-\ie\left(\nu_0 + \left(k-k_0\right)\nu_1 \right)t} \\	
	&\quad \ + \varepsilon^2 (2\uppi)^{-1} \left(\widehat{A} * \widehat{\overline{A}} * \widehat{A} \right)\left(\frac{k-k_0}{\varepsilon},\varepsilon^2 t \right) \bs{p}(x_1) \e^{-\ie \left(\nu_0 + (k-k_0)\nu_1 \right)t} + \widehat{\cc},
\end{align*}
where $\bs{p}$ will be chosen such that formally $\Reshat (\Uexthat) = \mathcal{O}(\varepsilon^3)$. 
To determine $\bs{p}$, we calculate
\begin{align*}
	\Reshat(\Uexthat) &= \eps^2E_1(I-P_{\bs{m}})\Reshat^{(2,E_1)} - \frac{1}{2} \ie \varepsilon^2 E_1 K^2(L_0+\nu_0 \Lambda)\partial_k^2 \bs{w}(x_1,k_0)\widehat{A} \\
	&\quad\ - \ie (2 \uppi)^{-1} \varepsilon^2 E_1 \left(L_0 + \nu_0 \Lambda \right) \bs{p} \left(\widehat{A} * \widehat{\overline{A}} * \widehat{A} \right) + \widehat{\cc} + \mathcal{O}(\eps^3).
\end{align*}
\newpage
Since $\widehat{A}$ solves \eqref{E:NLS-FT}, we know that 
\begin{align*}
	(I-P_{\bs{m}})\Reshat^{(2,E_1)}& = \frac{1}{2} \ie K^2(L_0+\nu_0 \Lambda)\partial_k^2 \bs{w}(x_1,k_0) \widehat{A}\\
	&\ - \tfrac{\ie }{\sqrt{2 \uppi}} \left(\kappa \begin{pmatrix}
		\epsilon_1 m_1 \\
		\epsilon_1 m_2 \\
		\upmu_0 m_3
	\end{pmatrix} + \epsilon_3 \nu_0 \begin{pmatrix}
		3 m_1^3 - m_1 m_2^2 \\
		- 3 m_2^3 + m_1^2 m_2 \\
		0
	\end{pmatrix} \right) \left(\widehat{A} * \widehat{\overline{A}} * \widehat{A} \right),
\end{align*}
where \eqref{E:ev-prob-Tayl3} was used again. Therefore the terms of order $\varepsilon^2$ in $\Reshat(\Uexthat)$ vanish if $\bs{p}$ solves
\begin{equation}
	\left(L_0 + \nu_0 \Lambda\right) \bs{p} = - \kappa \begin{pmatrix}
		\epsilon_1 m_1 \\
		\epsilon_1 m_2 \\
		\upmu_0 m_3
	\end{pmatrix} - \epsilon_3 \nu_0 \begin{pmatrix}
		3 m_1^3 - m_1 m_2^2 \\
		- 3 m_2^3 + m_1^2 m_2 \\
		0
	\end{pmatrix}.
	\label{E:system_p}
\end{equation}
Such a function $\bs{p}$ exists since the right-hand side in \eqref{E:system_p} is orthogonal to $\bs{m}$ by the choice of $\kappa$
and it therefore lies in the range of $L_0 + \nu_0 \Lambda$ due to the closed range theorem. 
Here we use Lemma \ref{L:inhom-prob} (case i) with $k=k_0$ and $\omega=\nu_0.$ Here Assumption \eqref{E:ass-esssp} is employed. We now apply the inverse Fourier transformation to obtain
\begin{equation}\label{E:Uext}
	\begin{aligned}
		\Uext\left(x_1,x_2, t\right) &= \varepsilon A\left(X_2,T \right) \bs{m}\left(x_1\right)\e^{\ie\left(k_0 x_2 - \nu_0 t \right)} \\
		&\quad\ - \varepsilon^2 \ie \partial_{X_2} A(X_2,T) \partial_k \bs{w}\left(x_1,k_0\right)\e^{\ie\left(k_0 x_2 - \nu_0 t \right)} \\
		&\quad\ - \varepsilon^3 \frac{1}{2} \partial_{X_2}^2 A(X_2,T) \partial_k^2 \bs{w}\left(x_1,k_0\right)\e^{\ie\left(k_0 x_2 - \nu_0 t \right)} \\
		&\quad\ + \varepsilon^3 |A(X_2,T)|^2 A(X_2,T) \bs{p}(x_1)\e^{\ie\left(k_0 x_2 - \nu_0 t\right)} \\
		&\quad\ +\varepsilon^3 A^3(X_2,T)\bs{h}(x_1) \e^{3 \ie\left(k_0 x_2 - \nu_0 t\right)} + \cc,
	\end{aligned}
\end{equation}
recalling that $X_2=\eps(x_2-\nu_1 t)$ and $T=\eps^2 t$. Since $\Reshat(\Uexthat)$ is of order $\varepsilon^3$ 
after transformation $\Res(\Uext)$ is of order $\varepsilon^4$ formally. The terms of order $\varepsilon^4$ of $\Res(\Uext)$ can be found in Appendix \ref{S:resiudal_order_4}.

\begin{remark}\label{R:Uext-div-IFC}
	Note that our residual incorporates neither the divergence condition on $\bs{\cD}$ nor the interface conditions. This is because these quantities do not directly appear in the $\cG^3$-norm which we use to estimate the error. 
	
	Nevertheless, for $\bs{\cE}_{\rm{ext}} := \left(U_{\rm{ext},1}, U_{\rm{ext},2},0 \right)^\top$ one can show that $\nabla\cdot\bs{\cD}(\bs{\cE}_{\rm{ext}})=\mathcal{O}(\eps^2)$.
	Indeed, for the divergence condition we have 
	$$\pa_{x_1} \cD_1(\bs{\cE}_{\rm{ext}})+\pa_{x_2} \cD_2(\bs{\cE}_{\rm{ext}}) = \eps F_1(\pa_{x_1}(\epsilon_1m_1) +\epsilon_1 \ie k_0 m_2)A+\cc + \mathcal{O}(\eps^2)= \mathcal{O}(\eps^2)$$
	because $\pa_{x_1}(\epsilon_1m_1) +\epsilon_1 \ie k_0 m_2=0$, see \eqref{E:linear_divergence} at $k=k_0$.
	
	Regarding the interface conditions, $\Uans$ and $\Uext$ fulfill \eqref{E:IFC-red-E2H3} exactly
	since the second and third components of $\bs{m},$ $\partial_k \bs{w}(k_0),$ $\partial_k^2 \bs{w}(k_0),$ $\bs{h}$ and $\bs{p}$ 
	belong to $H^1(\R)$ and are therefore continuous at the interface. Moreover, the jumps of 
	$\cD_1(\bs{\cE}_{\rm{ans}})$ and $\cD_1(\bs{\cE}_{\rm{ext}})$ at $x_1=0$ are of order $\mathcal{O}(\eps^3)$, respectively $\mathcal{O}(\eps^4)$.
	Indeed, at $\mathcal{O}(\eps)$ condition \eqref{E:IFC-red-D1} holds exactly for $\Uans$ and $\Uext$ because 
	$\bs{m}$ satisfies the interface conditions \eqref{E:IFC-linear}. At $\mathcal{O}(\eps^2)$ there are no 
	contributions for $\Uans$. For $\Uext$ only a linear term involving $\partial_k \bs{w}(\cdot,k_0)$
	appears. As \eqref{E:IFC-linear} holds for each $k$, we have $\llbracket \epsilon_1 \partial_k w_1(\cdot,k_0) \rrbracket_{\text{1D}}=0$
	and also $\llbracket \epsilon_1 \partial_k^2 w_1(\cdot,k_0) \rrbracket_{\text{1D}}=0$, and hence
	$\Uext$ satisfies \eqref{E:IFC-red-D1} at $\mathcal{O}(\eps^2)$. 
	Finally, \eqref{E:IFC-red-D1} holds for $\Uext$ also at $\mathcal{O}(\eps^3)$ since $\bs{h} \in D(L(3k_0,3 \nu_0))$ and $\bs{p} \in D(L(k_0,\nu_0))$ solve 
	\eqref{E:system_h} and \eqref{E:system_p}, respectively. This fact and the continuity of $p_1$ and $h_1$ imply the jump equations
	\begin{align*}
		\llbracket \epsilon_1 p_1 \rrbracket_{\text{1D}} = -\llbracket \epsilon_3 \left(3 m_1^3 - m_1 m_2^2\right) \rrbracket_{\text{1D}} \quad\text{ and }\quad
		\llbracket \epsilon_1 h_1 \rrbracket_{\text{1D}} = -\left \llbracket \epsilon_3 \left(m_1^3 + m_1 m_2^2\right) \right \rrbracket_{\text{1D}},
	\end{align*}
	implying that the first nonlinear contribution in \eqref{E:IFC-red-D1} for $\Uext$ is canceled.
	%The jump conditions \eqref{E:IFC-red-E2H3} are even satisfied by $\Uans, \Uext$ exactly since the second and third components of $\bs{m}, \partial_k \bs{w}(k_0), \partial_k^2 \bs{w}(k_0), \bs{h}, \bs{p}$ are in $H^1(\R)$ and therefore continuous at the interface.	
	
\end{remark}

As we explain next, $\Res(\Uext)(\cdot,\cdot,t)$ lies in $L^2(\R^2)^3$ for all $t\in [0,T_0\eps^{-2}]$ if, e.g., 
\beq\label{E:ass-Amwk}
\begin{aligned}
&A\in \bigcap_{k=0}^1 C^{1-k}([0,T_0], H^{2+k}(\R)) \quad \text{and} \quad {\bs m}, \pa_k{\bs w}(\cdot, k_0), \pa_k^2{\bs w}(\cdot, k_0),\bs{h}, \bs{p} \in L^2(\R)^3\cap L^\infty(\R)^3.
\end{aligned}
\eeq

So far we have used the $\eps$-orders in a formal sense, i.e., without specifying the norm. To determine the asymptotic order of the $L^2(\R^2)^3$-norm of the residual, 
we note that the summands of $\Res(\Uext)(\cdot,\cdot,t)$ of (the smallest) order $\eps^4$ have the form $g(\bs{x}):=f_1(x_1)f_2(\eps x_2)f_3(x_2)$ with $f_1, f_2\in L^2(\R)$ and $f_3\in L^\infty(\R)$.
Such products can be estimated by
\begin{equation*}
	\|g\|_{L^2(\R^2)}\leq\eps^{-\frac{1}{2}}\|f_1\|_{L^2(\R)}\|f_2\|_{L^2(\R)}\|f_3\|_{L^\infty(\R)}.
\end{equation*}
Terms of higher order in $\eps$ are of a similar form. The $x_1$-derivatives do not appear in the residual, 
since they only occur at low orders of $\eps$ and were canceled in the construction. Hence, in \eqref{E:ass-Amwk} we have to use 
Sobolev spaces only for $A$.
%To see that the assumptions in \eqref{E:ass-Amwk} are sufficient for an $L^2$-estimate, note that all terms in the residual are linear or cubic with the factors being the above terms in $\Uext$ or their first derivatives. However, $x_1$-derivatives do not appear in the residual anymore, since these terms appear at order $\varepsilon^3$ or lower and therefore vanish by the construction of $\Uext$. Hence, in \eqref{E:ass-Amwk} the $L^2(\R)$-property is sufficient for the $x_1$-dependent factors.
In the residual the derivatives of highest order are $\partial_{X_2}^3 A$ and $\partial_T\partial_{X_2}^2 A$. This can be seen from the form of $\Uext$ and the fact that the Maxwell equations are of first order. These terms are bounded in
$L^2(\R)^3$ uniformly in time according to \eqref{E:ass-Amwk}. Due to the embedding $H^1(\R)\hookrightarrow L^\infty(\R)$ the lower-order factors 
are bounded. The products appearing in nonlinear terms are estimated by (suppressing the time dependence)
%The $H^k$-spaces are chosen in order to take advantage of the embedding $H^1(\R)\hookrightarrow L^\infty(\R)$. 
%For $A(\cdot, T)$ we need the $H^3(\R)$-property because cubic terms involving $A$, $\pa_{X_2} A$ and $\partial_{X_2}^2 A$ (and their complex conjugates) appear in the residual at $\mathcal{O}(\eps^4)$ and higher. We use, for instance (suppressing the time dependence)
\begin{align*}
	\||A|^2\pa_{X_2} A\|_{L^2(\R)} &\leq \|A\|_{L^\infty(\R)}^2\| \pa_{X_2} A\|_{L^2(\R)}\leq C\|A\|_{H^1(\R)}^3,\\
	\||\pa_{X_2}^2 A|^2\pa_{X_2}^2 A\|_{L^2(\R)} &\leq \|\pa_{X_2}^2 A\|_{L^\infty(\R)}^2\|\pa_{X_2}^2 A\|_{L^2(\R)}\leq C\|A\|_{H^3(\R)}^3,\\
	\||\pa_{X_2}^2 A|^2 \pa_T \pa_{X_2}^2 A\|_{L^2(\R)}&\leq \|\pa_{X_2}^2 A\|_{L^\infty(\R)}^2\|\pa_T \pa_{X_2}^2 A\|_{L^2(\R)}\leq C\|A\|_{H^3(\R)}^2 \|\pa_T A\|_{H^2(\R)},
\end{align*}
for instance. Using these principles, we easily deduce
\begin{equation*}
\|\Res(\Uext)(\cdot,\cdot,t)\|_{L^2(\R^2)^3}=\|\Reshat(\Uexthat)(\cdot,\cdot,t)\|_{L^2(\R^2)^3}\leq C \eps^{\frac{7}{2}}
\end{equation*}
under condition \eqref{E:ass-Amwk}. The constant $C$ depends on the norms of $A,$ ${\bs m},$ $\pa_k{\bs w}(\cdot, k_0),$ $\pa_k^2{\bs w}(\cdot, k_0),$ $\bs{h}$, and 
$\bs{p}$ appearing in \eqref{E:ass-Amwk}.

For the error analysis in Section \ref{S:pf-main} we have to estimate the residual in $\mathcal{G}^3(\R^2 \times [0,T_0\varepsilon^{-2}])^3$ and not only in $L^2(\R^2)^3$ for all $t \in [0,T_0 \varepsilon^{-2}]$. To do this, we impose the stricter conditions
\beq\label{E:ass2-Amwk}
A \in \bigcap_{k=0}^4 C^{4-k}([0,T_0],H^{2+k}(\R)) \quad \text{and} \quad {\bs m}, \pa_k{\bs w}(\cdot, k_0), \pa_k^2{\bs w}(\cdot, k_0), \bs{h}, \bs{p} \in \mathcal{H}^3(\R)^3.
\eeq 
Since we want to estimate the derivatives up to order three of the residual, it is clear that the regularity of the envelope $A$ has also to increase by three orders in space and time. Since no $x_1$-derivative appears in the residual we can use the algebra property of $\mathcal{H}^3(\R)$ to control the appearing nonlinear terms.
With the same arguments as before it is now possible to bound the residual by
\beq\label{E:Res-est}
\|\Res(\Uext)\|_{\mathcal{G}^3(\R^2\times(0,T_0\eps^{-2}))^3}\leq C \eps^{\frac{7}{2}}
\eeq
under the above conditions \eqref{E:ass2-Amwk}. The constant $C$ depends on the norms of $A,$ ${\bs m},$ $\pa_k{\bs w}(\cdot, k_0),$ $\pa_k^2{\bs w}(\cdot, k_0),$ 
$\bs{h}$, and $\bs{p}$ appearing in \eqref{E:ass2-Amwk}. Furthermore, under condition \eqref{E:ass2-Amwk} we have
\begin{equation*}
	\Uext \in \bigcap_{k=0}^4 C^{4-k}([0,T_0\eps^{-2}],\mathcal{H}^{\min \{3;k\}}(\R^2))^3. 
\end{equation*}

In the bootstrapping argument of Section \ref{S:bootstrap} we need somewhat stronger regularity properties of $\Uext$. They follow
from the structure of $\Uext$, which is a sum of products of functions in $(x_2,t)$ and in $x_1$, where the latter only appear linearly.
So we can use the Sobolev embedding $\mathcal{H}^1(\R) \hookrightarrow L^\infty(\R)$ in both space dimensions separately, 
thus avoiding the less favorable embedding $\mathcal{H}^2(\R^2) \hookrightarrow L^\infty(\R^2)$. 
Take multi-indices $\bs{\alpha}=(\alpha_1,\alpha_2,\alpha_t)^\top$ with $|\bs{\alpha}|\le 3$ and $\alpha_1 \le 2$, as well as
$|\bs{\beta}|= 3$ with $\beta_1\in\{1,2\}$. For $\partial^{\bs{\alpha}} := \partial_{x_1}^{\alpha_1}\partial_{x_2}^{\alpha_2}\partial_t^{\alpha_t}$ condition \eqref{E:ass2-Amwk}
implies that 
\beq\label{E:regularity_Uext}
\|\partial^{\bs{\alpha}}\Uext\|_{L^\infty(\R^2\times (0,T_0\eps^{-2}))^3},\|\partial^{\bs{\beta}} \partial_t \Uext\|_{L^\infty(\R^2\times (0,T_0\eps^{-2}))^3}\le C\eps,
\eeq 
and
\beq\label{E:regularity_Uext2}
\int_{\R} \sup_{x_2\in\R} |\partial_{x_1}^3 \partial_t^k \Uext(x_1,x_2,t)|^2\,\mathrm{d} x_1 \le C\eps^2, \qquad k \in \{0,1\}, 
\eeq
for all $t\in [0,T_0\eps^{-2}]$. For $|\bs{\gamma}|= 3$ with $\gamma_1=0$ we have $\partial^{\bs{\gamma}} \partial_t \Uext = \mathcal{A} + \mathcal{B}$ with
\begin{align*}
	\mathcal{A}(x_1,x_2,t) &:= \varepsilon A\left(X_2,T \right) \bs{m}\left(x_1\right)\partial^{\bs{\gamma}} \partial_t \left(\e^{\ie\left(k_0 x_2 - \nu_0 t \right)}\right),\\
	\mathcal{B}(x_1,x_2,t) &:= \partial^{\bs{\gamma}} \partial_t \Uext(x_1,x_2,t) - \mathcal{A}(x_1,x_2,t),
\end{align*}
and one derives the estimates
\beq\label{E:regularity_Uext3}
\|\mathcal{A}\|_{L^\infty(\R^2\times (0,T_0\eps^{-2}))^3}\le C\eps, \quad \int_{\R} \sup_{x_1\in\R} |\mathcal{B}(x_1,x_2,t)|^2\,\mathrm{d} x_2 \le C\eps^2.
\eeq

An application of Lemma \ref{lemma_regularity_of_eigenfunction} gives us the necessary regularity of ${\bs m}$, $\pa_k{\bs w}(\cdot, k_0)$, $\pa_k^2{\bs w}(\cdot, k_0)$, $\bs{h}$, and $\bs{p}$ under our assumptions on $\epsilon_1$ and $\epsilon_3$.

\begin{lem}\label{L:regularity_ef}
	Let $\bs{m}, \partial_k \bs{w}\left(k_0\right), \partial_k^2 \bs{w}\left(k_0\right), \bs{h}, \bs{p} \in L^2(\R)^3$ be defined as before. Assume that $\epsilon_1, \epsilon_{3} \in \mathcal{W}^{3,\infty}(\R)$. 
	Then $\bs{m}, \partial_k \bs{w}\left(k_0\right), \partial_k^2 \bs{w}\left(k_0\right),\bs{p}, \bs{h} \in \mathcal{H}^3(\R)^3$.
\end{lem}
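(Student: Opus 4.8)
The idea is to realize each of the five profiles as an $L^2(\R)^3$-solution of an inhomogeneous system of the type \eqref{E:general_inhomo_system} and to upgrade its regularity with Lemma \ref{lemma_regularity_of_eigenfunction}, proceeding in the bootstrap order $\bs{m}$, $\partial_k\bs{w}(k_0)$, $\partial_k^2\bs{w}(k_0)$, $\bs{h}$, $\bs{p}$. Throughout one uses that $\mathcal{H}^3(\R)$ is a Banach algebra and that multiplication by an element of $\mathcal{W}^{3,\infty}(\R)$ maps $\mathcal{H}^3(\R)$ into itself (Appendix \ref{A:products}); combined with $\epsilon_1,\epsilon_3\in\mathcal{W}^{3,\infty}(\R)$ and the standing assumptions \eqref{E:ass-eps1}, \eqref{E:ass-esssp}, this lets us verify that every right-hand side lies in $\mathcal{H}^3(\R)^3$, hence in particular has first component in $\mathcal{H}^3(\R)$ and remaining components in $\mathcal{H}^2(\R)$, which is what Lemma \ref{lemma_regularity_of_eigenfunction} requires.

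First I would treat $\bs{m}$: it lies in $D(L(k_0))\subset L^2(\R)^3$ and, by \eqref{E:ev-prob-Tayl1}, solves \eqref{E:general_inhomo_system} with $k=k_0$, $\omega=\nu_0$ and vanishing right-hand side, so Lemma \ref{lemma_regularity_of_eigenfunction} yields $\bs{m}\in\mathcal{H}^3(\R)^3$. Next, $\partial_k\bs{w}(k_0)\in L^2(\R)^3$ (Corollary \ref{cor:bsw}) solves, by \eqref{E:ev-prob-Tayl2}, the equation $(L_0+\nu_0\Lambda)\partial_k\bs{w}(k_0)=-(L_1+\nu_1\Lambda)\bs{m}$, whose right-hand side equals $-\bigl(m_3+\nu_1\epsilon_1 m_1,\ \nu_1\epsilon_1 m_2,\ m_1+\nu_1\upmu_0 m_3\bigr)^\top$ and therefore lies in $\mathcal{H}^3(\R)^3$ by the preceding step; another application of Lemma \ref{lemma_regularity_of_eigenfunction} (again with $k=k_0$, $\omega=\nu_0$) gives $\partial_k\bs{w}(k_0)\in\mathcal{H}^3(\R)^3$. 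The same reasoning applied to \eqref{E:ev-prob-Tayl3}, whose right-hand side $-\nu_2\Lambda\bs{m}-2(L_1+\nu_1\Lambda)\partial_k\bs{w}(k_0)$ involves only functions already known to be in $\mathcal{H}^3(\R)^3$, yields $\partial_k^2\bs{w}(k_0)\in\mathcal{H}^3(\R)^3$.

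Finally, $\bs{h}\in L^2(\R)^3$ solves \eqref{E:system_h}, i.e.\ \eqref{E:general_inhomo_system} with $k=3k_0$, $\omega=3\nu_0$ and right-hand side $-3\nu_0\epsilon_3\,(m_1^3+m_1m_2^2,\ m_2^3+m_2m_1^2,\ 0)^\top$; since $\bs{m}\in\mathcal{H}^3(\R)^3$, since $\mathcal{H}^3(\R)$ is an algebra and $\epsilon_3\in\mathcal{W}^{3,\infty}(\R)$, this right-hand side belongs to $\mathcal{H}^3(\R)^3$, while $3\nu_0\neq 0$ follows from \eqref{E:ass-esssp} and the positivity of $\epsilon_1$ in \eqref{E:ass-eps1}, so Lemma \ref{lemma_regularity_of_eigenfunction} gives $\bs{h}\in\mathcal{H}^3(\R)^3$. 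Likewise $\bs{p}\in L^2(\R)^3$ solves \eqref{E:system_p} with right-hand side $-\kappa\,(\epsilon_1 m_1,\ \epsilon_1 m_2,\ \upmu_0 m_3)^\top-\epsilon_3\nu_0\,(3m_1^3-m_1m_2^2,\ -3m_2^3+m_1^2m_2,\ 0)^\top\in\mathcal{H}^3(\R)^3$, and Lemma \ref{lemma_regularity_of_eigenfunction} with $k=k_0$, $\omega=\nu_0$ yields $\bs{p}\in\mathcal{H}^3(\R)^3$. There is no genuine obstacle here; the only points needing attention are respecting the order of the bootstrap — each profile must be shown to be in $\mathcal{H}^3(\R)^3$ before it appears on the right-hand side of a later equation — and the (routine) verification, via the product estimates of Appendix \ref{A:products}, that each right-hand side indeed lies in $\mathcal{H}^3(\R)^3$ so that the hypotheses of Lemma \ref{lemma_regularity_of_eigenfunction} are satisfied.
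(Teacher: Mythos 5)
Your proposal is correct and follows essentially the same route as the paper's proof: establish $\bs{m}\in\mathcal{H}^3(\R)^3$ from the homogeneous equation via Lemma~\ref{lemma_regularity_of_eigenfunction}, then bootstrap in the order $\partial_k\bs{w}(k_0)$, $\partial_k^2\bs{w}(k_0)$, $\bs{h}$, $\bs{p}$ by writing each as a solution of an inhomogeneous system of the form \eqref{E:general_inhomo_system} whose right-hand side lies in $\mathcal{H}^3(\R)^3$ thanks to the previously established regularity and $\epsilon_1,\epsilon_3\in\mathcal{W}^{3,\infty}(\R)$. Your additional remarks on verifying the hypotheses of Lemma~\ref{lemma_regularity_of_eigenfunction} (in particular $\omega\epsilon_1\neq 0$ at $(3k_0,3\nu_0)$ via \eqref{E:ass-eps1} and \eqref{E:ass-esssp}) are correct and slightly more explicit than the paper, but do not constitute a different approach.
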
 
\begin{proof}
	Since $\left(L(k_0) +\nu_0 \Lambda\right) \bs{m} = \bs{0}$, Lemma \ref{lemma_regularity_of_eigenfunction} with $\bs{f} = \bs{0}$ 
	shows that $\bs{m} \in \mathcal{H}^3(\R)^3$. Next, differentiating $\left(L(k) + \omega \Lambda\right) \bs{w} = \bs{0}$ in $k$, 
	we see that $\partial_k \bs{w}\left(k_0\right)$ and $\partial_k^2 \bs{w}\left(k_0\right)$ solve
	\begin{align*}
		\left(L(k_0) +\nu_0 \Lambda\right) \partial_k \bs{w}\left(k_0\right) = &- \left(\partial_k L(k_0) +\partial_k\omega(k_0) \Lambda\right)\bs{m}, \\
		\left(L(k_0) +\nu_0 \Lambda\right) \partial_k^2 \bs{w}\left(k_0\right) = &- 2 \left( \partial_k L(k_0) +\partial_k\omega(k_0) \Lambda\right)\partial_k \bs{w}\left(k_0\right) - \left(\partial_k^2 L(k_0) +\partial_k^2\omega(k_0)\Lambda\right) \bs{m}.
	\end{align*}
	As $\bs{m} \in \mathcal{H}^3(\R)^3$, the functions
	\begin{equation*}
		\left(\partial_k L(k_0) +\partial_k\omega(k_0) \Lambda\right)\bs{m} = \begin{pmatrix}
			\epsilon_1 \nu_1 m_1 + m_3 \\
			\epsilon_1 \nu_1 m_2 \\
			m_1 + \upmu_0 \nu_1 m_3 
		\end{pmatrix}, 
		\end{equation*}
		\begin{equation*}
\left(\partial_k^2 L(k_0) +\partial_k^2\omega(k_0)\Lambda\right) \bs{m} = \begin{pmatrix}
			\epsilon_1 \nu_2 m_1 \\
			\epsilon_1 \nu_2 m_2 \\
			\upmu_0 \nu_2 m_3 
		\end{pmatrix}
	\end{equation*}
	belong to $\mathcal{H}^3(\R)^3$. Therefore from Lemma \ref{lemma_regularity_of_eigenfunction} we infer $\partial_k \bs{w}\left(k_0\right) \in \mathcal{H}^3(\R)^3$.
	This fact implies $\partial_k^2 \bs{w}\left(k_0\right) \in \mathcal{H}^3(\R)^3$ in the same way.
	To treat $\bs{h}$ and $\bs{p}$ from \eqref{E:system_h} and \eqref{E:system_p}, we note that the right-hand sides in
	\begin{align*}
		\left(L(k_0) +\nu_0 \Lambda\right) \bs{p} &= - \kappa \begin{pmatrix}
			\epsilon_1 m_1 \\
			\epsilon_1 m_2 \\
			\upmu_0 m_3
		\end{pmatrix} - \epsilon_3 \nu_0 \begin{pmatrix}
			3 m_1^3 - m_1 m_2^2 \\
			- 3 m_2^3 + m_1^2 m_2 \\
			0
		\end{pmatrix},\\
	(L(3k_0) +3\nu_0 \Lambda) \bs{h} &= - 3\nu_0\epsilon_3 \begin{pmatrix} m_1^3+m_1m_2^2\\ 
		m_2^3+m_2m_1^2\\
		0
	\end{pmatrix}.
	\end{align*}
	are also contained in $\mathcal{H}^3(\R)^3$ since $\bs{m}\! \in \!\mathcal{H}^3(\R)^3$. Hence, the statement follows as before.
\end{proof}

%------------------------------------------------------------------------------------------
%------------------------------------------------------------------------------------------
\newpage
\section{Local existence theory}\label{S:exist}

We employ local existence results of \cite{schnaubelt2018local} for linear and quasilinear hyperbolic problems. 
We first define some additional function spaces. For any open $\Omega \subset \R^2$ and $J\subset \R$ we use 
\begin{align*}
	F^{m,n}(\Omega \times J) &:= \left\{\! A \in W^{1,\infty}(\Omega \times J)^{n \times n} \mv \partial^{\bs{\alpha}} A \in L^\infty(J,H^{m-|\bs{\alpha}|}(\Omega))^{n \times n}\! \right.\\
	&\ \textcolor{white}{:=\Big\{} + W^{m-|\bs{\alpha}|,\infty}(\Omega \times J)^{n \times n}  \mathrm{~for~all~} \bs{\alpha} \in \N_0^3 \mathrm{~with~} 1 \leq |\bs{\alpha}| \leq m \Big\}, \\
	\norm{A}_{F^{m,n}(\Omega \times J)} &:= \max \!\left\{\!\norm{A}_{W^{1,\infty}(\Omega \times J)^{n \times n}};\! \max_{1 \leq |\bs{\alpha}| \leq m} \norm{\partial^{\bs{\alpha}} A}_{L^\infty(J,H^{m-|\bs{\alpha}|}(\Omega))^{n \times n} + W^{m-|\bs{\alpha}|,\infty}(\Omega \times J)^{n \times n}}\!\right\}\!,\\
	\mathcal{F}^{m,n}(\R^2\times J) &:= \left\{ A \in \mathcal{W}^{1,\infty}(\R^2 \times J)^{n \times n} \mv  A^\pm \in F^{m,n}(\R_\pm^2\times J) \right\},\\
	\norm{A}_{\mathcal{F}^{m,n}(\R^2\times J)} &:= \max \left\{\norm{A^-}_{F^{m,n}(\R_-^2 \times J)};\norm{A^+}_{F^{m,n}(\R_+^2 \times J)}\right\},
\end{align*}
with the usual definition
\begin{align*}
	&\norm{A}_{L^\infty(J,H^{m-|\bs{\alpha}|}(\Omega)) + W^{m-|\bs{\alpha}|,\infty}(\Omega \times J)} := \inf\! \left\{\!\norm{B}_{L^\infty(J,H^{m-|\bs{\alpha}|}(\Omega))} \!+\! \norm{C}_{W^{m-|\bs{\alpha}|,\infty}(\Omega \times J)} \mv \right. \\
	&\quad \textcolor{white}{:=\inf \{}  A = B + C, B \in L^\infty(J,H^{m-|\bs{\alpha}|}(\Omega)), C \in W^{m-|\bs{\alpha}|,\infty}(\Omega \times J) \Big\}.
\end{align*}
For a fixed time instant we use the spaces
\begin{align*}
	F^{m,n}_0(\Omega) &:= \left\{\!A \in L^\infty (\Omega)^{n \times n} \mv \partial^{\bs{\alpha}} A \in H^{m-|\bs{\alpha}|}(\Omega)^{n \times n} + W^{m-|\bs{\alpha}|,\infty}(\Omega)^{n\times n}\right.\\ 
	&\textcolor{white}{:=\big\{}\left.\mathrm{~for~all~} \bs{\alpha} \in \N_0^2 \mathrm{~with~} 1 \leq |\bs{\alpha}| \leq m \right\}, \\
	\norm{A}_{F_0^{m,n}(\Omega)} &:= \max\left\{\norm{A}_{L^{\infty}(\Omega)^{n \times n}}; \max_{1 \leq |\bs{\alpha}| \leq m} \norm{\partial^{\bs{\alpha}} A}_{H^{m-|\bs{\alpha}|}(\Omega)^{n \times n} + W^{m-|\bs{\alpha}|,\infty}(\Omega)^{n \times n}}\right\},\\
	\mathcal{F}_0^{m,n}(\R^2) &:= \left\{ A \in L^\infty(\R^2)^{n \times n} \mv A^- \in F_0^{m,n}(\R_-^2), A^+ \in F_0^{m,n}(\R_+^2) \right\},\\
	\norm{A}_{\mathcal{F}_0^{m,n}(\R^2)} &:= \max \left\{\norm{A^-}_{F_0^{m,n}(\R_-^2)}; \norm{A^+}_{F_0^{m,n}(\R_+^2)}\right\}.
\end{align*}
Finally, we define subspaces of $\mathcal{F}^{m,n}$ with the properties of positive definiteness, or convergence to a constant matrix for $|(\bs{x},t)|\to \infty$. For $\eta>0$ we set
\begin{align*}
	\mathcal{F}^{m,n}_\eta(\R^2\times J) &:= \left\{A \in \mathcal{F}^{m,n}(\R^2\times J) \mv A = A^\top, \bs{v}^\top A \bs{v} \geq \eta |\bs{v}|^2 \mathrm{~for~all~} \bs{v} \in \R^n \right\},\\
	%	\mathcal{F}^{m,n}_{{\rm cp}}(\R^2\times J) &:= \left\{A \in \mathcal{F}^{m,n}(\R^2\times J) \mv \exists \widetilde{A}\in \R^{n\times n} \mathrm{~and~a~compact~set~} K \subset \R^2 \times J \right.\\
	%	&\textcolor{white}{:=\big\{}\left. \mathrm{~with~} A(\bs{x},t) = \widetilde{A} \mathrm{~if~}(\bs{x},t) \notin K \right\},\\
	\mathcal{F}^{m,n}_{{\rm cv}}(\R^2\times J) &:= \left\{ A \in \mathcal{F}^{m,n}(\R^2\times J) \mv \exists \widetilde{A}\in \R^{n\times n} : \lim_{|(\bs{x},t)| \rightarrow \infty}A(\bs{x},t) =\widetilde{A}\right\}, \\
	\mathcal{F}^{m,n}_{\eta,{\rm cv}}(\R^2\times J) &:= \mathcal{F}^{m,n}_{\eta}(\R^2\times J) \cap \mathcal{F}^{m,n}_{{\rm cv}}(\R^2\times J).
\end{align*}

In the linear setting, as in (4.1) of \cite{schnaubelt2018local} we will use a system of the form
\begin{equation}
	\left\{
	\begin{aligned}
		A_t(\bs{x},t)\partial_t \bs{U}^\pm + \sum_{j=1}^2 A_j \partial_{x_j} \bs{U}^\pm +M(\bs{x},t) \bs{U}^\pm &= \bs{f}^\pm, & \bs{x} &\in \R_\pm^2, & t &\in J, \\
		B_\Gamma \begin{pmatrix} \bs{U}^+ \\ \bs{U}^- \end{pmatrix} &= \bs{0}, & \bs{x} &\in \Gamma, & t &\in J, \\
		\bs{U}(0) &= \bs{U}^{(0)}, & \bs{x} &\in \R^2
	\end{aligned}
	\right.
	\label{E:u-syst-lin}
\end{equation}
on the interval $J:=(0,T')$ with some $T'>0$, where $M:\R^2\times J\to \R^{3\times 3}$, $A_t :\R^2\times J\to \R^{3\times 3}$, 
$A_t(\bs{x},t)$ is symmetric for all $(\bs{x},t)$, and
\begin{align*}
	A_1 &:= \begin{pmatrix}
		0 & 0 & 0 \\
		0 & 0 & 1\\
		0 & 1 & 0
	\end{pmatrix}, \quad A_2 := \begin{pmatrix}
		0 & 0 & -1 \\
		0 & 0 & 0\\
		-1 & 0 & 0
	\end{pmatrix}, \qquad
	B_\Gamma := \begin{pmatrix}
		0 & 1 & 0 & 0 & -1 & 0 \\
		0 & 0 & 1 & 0 & 0 & -1
	\end{pmatrix},
\end{align*}
see \eqref{E:rbeta-syst} in Section \ref{S:bootstrap}. Clearly, $B_\Gamma ( \bs{U}^+, \bs{U}^-)^\top$ encodes the interface conditions $\llbracket U_2 \rrbracket = \llbracket U_3 \rrbracket = 0$ on $\Gamma$. Note that we are not going to use \eqref{E:u-syst-lin} in order to study the linear part of \eqref{E:Maxw-red} but rather to study a fixed point problem in the bootstrapping argument for the nonlinear system in Section \ref{S:bootstrap}. Hence, we need the inhomogeneous term $ \bs{f}$ as well as the linear term $M\bs{U}$ in \eqref{E:u-syst-lin}.

\begin{defi}[Weak Solution of the Linear Hyperbolic Problem]
    By a weak solution of \eqref{E:u-syst-lin} we mean a function $\bs{U} \in C(\overline{J}, L^2(\R^2))^3$ that satisfies
	\begin{align*}
		&\int_J \int_{\R^2} \bs{f} \cdot \bs{\varphi}\,\mathrm{d}\bs{x}\,\mathrm{d}t = -\int_J \int_{\R^2} \left(\bs{U} \cdot \partial_t(A_t\bs{\varphi}) + \bs{U} \cdot \partial_{x_1} (A_1 \bs{\varphi}) + \bs{U} \cdot \partial_{x_2}(A_2 \bs{\varphi}) - \bs{U} \cdot M^\top \bs{\varphi} \right)\,\mathrm{d}\bs{x}\,\mathrm{d}t
	\end{align*}
for all
	\begin{align*}
		\bs{\varphi} \in  \left\{\bs{\psi} \mv \bs{\psi}^+ \in H^1_0\left(\R_+^2\times J\right)^3,\,\bs{\psi}^- \in H^1_0\left(\R_-^2\times J\right)^3\right\},
	\end{align*}
	$\operatorname{Tr}_{\Gamma\times J} \left(B_\Gamma \left(\bs{U}^+,\bs{U}^-\right)^\top \right) = \bs{0}$, \ and \ $\bs{U}(0) = \bs{U}^{(0)}$. 
\end{defi}
For the trace in the above definition, note that for a weak solution $\bs{U}$ of \eqref{E:u-syst-lin} the space-time divergence of 
$(A_1\varphi_1, A_2\varphi_2,A_t\varphi_t)$ belongs to $L^2(\R^2_{\pm}\times J)^3$ and thus has traces in $H^{-1/2}(\partial (\R^2_{\pm}\times J))^3$. 
This fact and the properties of $A_j$ allow us to define the initial and interface conditions in $H^{-1/2}(\R^2\times \{0\})$ and 
$H^{-1/2}(\Gamma\times J)^3$, respectively. See Chapter 2.1 in \cite{spitz2017local} for an in depth discussion of this trace.

The following existence result is a consequence of Theorem 3.1 in \cite{schnaubelt2018local}.
\begin{them}[Existence result for the linear system \eqref{E:u-syst-lin}]\label{T:lin-exist}	
	Let $\eta,T',r > 0$, $m \in\{0,1,2,3\}$, and $J=(0,T')$. Take coefficients $A_t \in \mathcal{F}_{\eta,{\rm cv}}^{3,3}(\R^2\times J)$ and $M \in \mathcal{F}_{{\rm cv}}^{3,3}(\R^2\times J)$ with
	\begin{align*}
		\norm{A_t}_{\mathcal{F}^{3,3}(\R^2\times J)}, \norm{A_t(\cdot,0)}_{\mathcal{F}_0^{2,3}(\R^2)}, \norm{\partial_t^j A_t(\cdot,0)}_{\mathcal{H}^{2-j}(\R^2)^{3 \times 3}} &\leq r, \\
		\norm{M}_{\mathcal{F}^{3,3}(\R^2\times J)}, \norm{M(\cdot,0)}_{\mathcal{F}_0^{2,3}(\R^2)}, \norm{\partial_t^j M(\cdot,0)}_{\mathcal{H}^{2-j}(\R^2)^{3\times 3}} &\leq r,
	\end{align*}
	for all $j \in \{1,2\}$. Choose $\bs{f} \in \mathcal{H}^m(\R^2\times J)^3$ and $\bs{U}^{(0)} \in \mathcal{H}^m(\R^2)^3$ such that the linear compatibility conditions of order $m$ are satisfied, see Definition \ref{D:compat}.
	
	Then there is a unique weak solution $\bs{U}$ of \eqref{E:u-syst-lin} in $\mathcal{G}^m(\R^2\times J)^3$ and a constant $C_m=C_m(\eta,r,T')\geq 1$ such that
	\begin{equation}
		\norm{\bs{U}}_{\mathcal{G}^m(\R^2\times J)^3}^2 \leq C_m \Big(\norm{\bs{U}^{(0)}}_{\mathcal{H}^m(\R^2)^3}^2 + \norm{\bs{f}}_{\mathcal{H}^m(\R^2\times J)^3}^2 + \sum_{j=0}^{m-1} \norm{\partial_t^j \bs{f}(\cdot,0)}_{\mathcal{H}^{m-1-j}(\R^2)^3}^2\Big)
		\label{E:est-lin}
	\end{equation}
	where, as usual, the sum is empty if $m=0$.
\end{them}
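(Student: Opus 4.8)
The plan is to deduce the statement from the linear well-posedness theory of \cite{schnaubelt2018local} (Theorem~3.1 there), so the real work is to verify that \eqref{E:u-syst-lin} is an instance of the problems treated there. First I would record that \eqref{E:u-syst-lin} is a symmetric hyperbolic system on the two half-spaces $\R^2_\pm$: the matrices $A_1,A_2$ are constant and symmetric, and $A_t\in\mathcal{F}^{3,3}_{\eta,\mathrm{cv}}(\R^2\times J)$ is symmetric, uniformly positive definite with lower bound $\eta$, of the $\mathcal{F}^{3,3}$-regularity required by \cite{schnaubelt2018local}, and convergent to a constant matrix at infinity --- the last property being what makes the theory applicable on an unbounded domain. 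The zeroth-order coefficient $M\in\mathcal{F}^{3,3}_{\mathrm{cv}}(\R^2\times J)$ is likewise admissible; such terms are allowed in \cite{schnaubelt2018local} precisely because they occur when linearizing quasilinear problems. The quantitative bounds imposed on $A_t$, $A_t(\cdot,0)$, $\partial_t^jA_t(\cdot,0)$ and their analogues for $M$ are exactly the ones under which the constant in the energy estimate of \cite{schnaubelt2018local} is uniform, giving the asserted dependence $C_m=C_m(\eta,r,T')$.

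The only genuinely structural point is that $B_\Gamma$ encodes a \emph{conservative} interface condition in the sense of \cite{schnaubelt2018local}. I would check this via the standard energy identity for \eqref{E:u-syst-lin}: multiplying by $\bs{U}$ and integrating over $\R^2_\pm$, the interface term is
\[
\int_\Gamma\Big((\bs{U}^-)^\top A_1\bs{U}^- - (\bs{U}^+)^\top A_1\bs{U}^+\Big)\,\mathrm{d}x_2,
\]
the opposite signs coming from the outward normals $+e_1$ of $\R^2_-$ and $-e_1$ of $\R^2_+$ at $\Gamma$. Since $\bs{v}^\top A_1\bs{v}=2v_2v_3$ and $(\bs{U}^+,\bs{U}^-)^\top\in\ker B_\Gamma$ means precisely $U_2^+=U_2^-$, $U_3^+=U_3^-$ on $\Gamma$, this term vanishes identically, so the energy flux through the interface is zero on $\ker B_\Gamma$. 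I would also record the matching dimension count: after the reflection $(x_1,x_2)\mapsto(-x_1,x_2)$, which turns the coupled problem into a single hyperbolic system on $\R^2_+$ whose $\partial_{x_1}$-coefficient is $\mathrm{diag}(A_1,-A_1)$, the boundary matrix at $x_1=0$ has two positive, two negative and two zero eigenvalues, so a maximal subspace annihilating the associated flux form has dimension $2+\min\{2,2\}=4=\dim\ker B_\Gamma$; hence $B_\Gamma$ defines a maximal conservative interface condition of exactly the admissible type.

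With these checks the hypotheses of the linear theorem in \cite{schnaubelt2018local} are met for data $\bs{f}\in\mathcal{H}^m(\R^2\times J)^3$ and $\bs{U}^{(0)}\in\mathcal{H}^m(\R^2)^3$ satisfying the linear compatibility conditions of order $m$ of Definition~\ref{D:compat} --- which are just the compatibility conditions of \cite{schnaubelt2018local} specialized to our coefficients. Applying that theorem then yields the unique weak solution $\bs{U}\in\mathcal{G}^m(\R^2\times J)^3$ and the bound \eqref{E:est-lin}; the time-trace terms $\partial_t^j\bs{f}(\cdot,0)$ enter because the $\partial_t^j\bs{U}(\cdot,0)$ with $j\le m-1$ are determined through \eqref{E:u-syst-lin} by $\bs{U}^{(0)}$, the $\partial_t^\ell\bs{f}(\cdot,0)$ and the coefficients, and must be bounded to close the higher-order energy estimates.

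I expect the main obstacle to be bookkeeping rather than a new idea: one must line up the precise function-space classes ($\mathcal{F}^{3,3}$ with convergence at infinity, $\mathcal{G}^m$, $\mathcal{H}^m$), the exact form of the compatibility conditions, and the exact constant dependence in \eqref{E:est-lin} with those in \cite{schnaubelt2018local}, and in particular confirm that the reference covers the coupled half-space formulation (equivalently, that the reflection above lands in the boundary-value setting treated there, zeroth-order term $M\bs{U}$ and $\mathcal{H}^m$-inhomogeneity included). Once this identification is made explicit, no further argument is needed.
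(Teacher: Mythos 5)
Your proposal takes essentially the same route as the paper: Theorem~\ref{T:lin-exist} is stated as a consequence of Theorem~3.1 of \cite{schnaubelt2018local}, and Remark~\ref{rem:spitz} sketches the same reduction you invoke (reflect the interface problem to a boundary-value problem on $\R^2_+$, solve in $L^2$ via \cite{eller2012symmetric}, then obtain $\mathcal{G}^m$-solutions by a priori estimates and approximation). Your explicit verification that $B_\Gamma$ is a maximal conservative interface condition --- the vanishing of the flux $\bs{v}^\top A_1\bs{v}=2v_2v_3$ on $\ker B_\Gamma$ together with the eigenvalue count $2+\min\{2,2\}=4=\dim\ker B_\Gamma$ after reflection --- is a correct piece of detail the paper leaves implicit, and the rest (matching function classes, compatibility conditions, constant dependence) is bookkeeping exactly as you say.
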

\begin{remark}\label{rem:spitz}
	Theorem 3.1 of \cite{schnaubelt2018local} deals with spatial domains in $\R^3$ instead of $\R^2$ and the solution vector takes values in $\R^6$ instead of $\R^3$, 
	but the above case can be treated in an analogous and simpler way. We do not need the lengthy localization process discussed in
	\cite{schnaubelt2018local}. As in this paper, one reduces the interface problem on $\R^2\setminus \Gamma$
	to a boundary value problem on $\R^2_+$ and the latter can again be solved in $L^2$ by means of general results from 
	\cite{eller2012symmetric}. To obtain solutions in $\mathcal{G}^m$ for appropriate data, one first needs a priori estimates.
	These can be shown as in Section~\ref{S:bootstrap} below which uses ideas from \cite{schnaubelt2018local}. The regularity of 
	solutions can then be shown by approximation arguments which are simplified versions of those in \cite{schnaubelt2018local}.
\end{remark}

In the bootstrapping argument in Section \ref{S:bootstrap} we need the next approximation result, involving the space
\[\mathcal{D}(\R^2)^3 := \left\{\bs{\varphi} \mv \bs{\varphi}^+ \in C^\infty(\R_+^2)^3,\,\bs{\varphi}^- \in C^\infty (\R_-^2)^3,\, \operatorname{supp} \bs{\varphi} \subset \R^2 \mathrm{~compact}\right\}.\]
\begin{lem}
	\label{L:approx-arg}
	Let $T'>0$, $J=(0,T')$, $\bs{U}^{(0)} \in L^2(\R^2)^3$, $A_t \in \mathcal{F}^{3,3}_{\eta,{\rm cv}}(\R^2\times J)$, $M \in \mathcal{F}^{3,3}_{{\rm cv}}(\R^2\times J)$ and $\bs{f} \in \mathcal{G}^0(\R^2\times J)^3$. Take a weak solution $\bs{U} \in \mathcal{G}^0(\R^2\times J)^3$ of \eqref{E:u-syst-lin} for the data $\big(\bs{f},\bs{U}^{(0)}\big)$. Then the following statements are true.
	\begin{enumerate}[label=\roman*)]
		\item There are sequences $\big(\bs{U}^{(0)}_{n}\big)_n \subset \mathcal{D}(\R^2)^3$ and $(\bs{f}_n)_n \subset \mathcal{H}^1(\R^2\times J)^3$ such that $\bs{U}^{(0)}_{n} \rightarrow \bs{U}^{(0)}$ in $L^2(\R^2)^3$ and $\bs{f}_n \rightarrow \bs{f}$ in $L^2(\R^2\times J)^3$ for $n \rightarrow \infty$ and the linear compatibility conditions of order $1$ are satisfied, i.e., $B_\Gamma \left(\bs{U}^{(0),+}_{n},\bs{U}^{(0),-}_{n}\right)^\top = 0$.
		\item There exists a sequence $(\bs{U}_n)_n \subset \mathcal{G}^1(\R^2\times J)^3$ such that for all $n \in \N$ the function $\bs{U}_n$ solves \eqref{E:u-syst-lin} for the data $\big(\bs{f}_n,\bs{U}^{(0)}_{n}\big)$ and $\bs{U}_n \rightarrow \bs{U}$ in $\mathcal{G}^0(\R^2\times J)^3$ for $n \rightarrow \infty$.
	\end{enumerate}
\end{lem}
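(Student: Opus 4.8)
The plan is to build the approximating data first (part i)) and then obtain the regularised solutions from Theorem~\ref{T:lin-exist} and pass to the limit (part ii)). For \emph{part i)}, the sequence $(\bs{f}_n)_n$ is easy: since no compatibility requirement is imposed on $\bs{f}_n$ at order $1$, any $\bs{f}_n\in\mathcal{H}^1(\R^2\times J)^3$ with $\bs{f}_n\to\bs{f}$ in $L^2(\R^2\times J)^3$ works (e.g.\ mollify $\bs{f}^\pm$ on $\R^2_\pm\times J$ and truncate at infinity). The delicate point is producing $\bs{U}^{(0)}_n\in\mathcal{D}(\R^2)^3$ with $B_\Gamma(\bs{U}^{(0),+}_n,\bs{U}^{(0),-}_n)^\top=0$ out of a merely $L^2$ datum $\bs{U}^{(0)}$, which has no interface trace, so that a naive mollification cannot be corrected by subtracting its jump. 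I would instead first annihilate $\bs{U}^{(0)}$ near $\Gamma$: take cut-offs $\theta_n\in C^\infty(\R)$ with $\theta_n\equiv 0$ on $\{|s|\le 1/n\}$ and $\theta_n\equiv 1$ on $\{|s|\ge 2/n\}$, and set $\bs{V}_n(\bs{x}):=\theta_n(x_1)\,\bs{U}^{(0)}(\bs{x})$, so that $\|\bs{V}_n-\bs{U}^{(0)}\|_{L^2(\R^2)^3}^2\le\int_{\{|x_1|\le 2/n\}}|\bs{U}^{(0)}|^2\,\mathrm{d}\bs{x}\to 0$ by dominated convergence. Since $\bs{V}_n$ vanishes on $\{|x_1|<1/n\}$, mollifying on $\R^2$ with parameter below $1/(2n)$ and multiplying by a large-ball cut-off yields a function in $C^\infty_c(\R^2)^3\subset\mathcal{D}(\R^2)^3$ that still vanishes near $\Gamma$ and is $L^2$-close to $\bs{V}_n$; a diagonal choice produces $\bs{U}^{(0)}_n\in\mathcal{D}(\R^2)^3$ with $\bs{U}^{(0)}_n\to\bs{U}^{(0)}$ in $L^2(\R^2)^3$, and since $\bs{U}^{(0)}_n\equiv 0$ near $\Gamma$ the order-$1$ compatibility condition holds trivially.

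For \emph{part ii)}, note $\mathcal{D}(\R^2)^3\subset\mathcal{H}^1(\R^2)^3$, so for each $n$ the data $(\bs{f}_n,\bs{U}^{(0)}_n)$ satisfy the hypotheses of Theorem~\ref{T:lin-exist} with $m=1$ (taking $r$ to be the maximum of the relevant $\mathcal{F}^{3,3}$- and trace-norms of $A_t$ and $M$), hence there is a unique weak solution $\bs{U}_n\in\mathcal{G}^1(\R^2\times J)^3$ of \eqref{E:u-syst-lin}. By linearity of the weak formulation, $\bs{U}_n-\bs{U}_k$ is a weak solution for the data $(\bs{f}_n-\bs{f}_k,\bs{U}^{(0)}_n-\bs{U}^{(0)}_k)$, and the $m=0$ case of Theorem~\ref{T:lin-exist} (which needs no compatibility) yields
\[
\|\bs{U}_n-\bs{U}_k\|_{\mathcal{G}^0(\R^2\times J)^3}^2\le C_0\big(\|\bs{U}^{(0)}_n-\bs{U}^{(0)}_k\|_{L^2(\R^2)^3}^2+\|\bs{f}_n-\bs{f}_k\|_{L^2(\R^2\times J)^3}^2\big),
\]
which tends to $0$ as $n,k\to\infty$. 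Thus $(\bs{U}_n)_n$ is Cauchy in $\mathcal{G}^0(\R^2\times J)^3$ and converges to some $\bs{V}\in\mathcal{G}^0(\R^2\times J)^3$.

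It remains to identify $\bs{V}=\bs{U}$. Passing to the limit $n\to\infty$ in the weak formulation for $\bs{U}_n$ is routine: the identity is linear in $(\bs{U}_n,\bs{f}_n)$, the test functions are fixed, and $\partial_t(A_t\bs{\varphi}),\partial_{x_1}(A_1\bs{\varphi}),\partial_{x_2}(A_2\bs{\varphi}),M^\top\bs{\varphi}\in L^2(\R^2\times J)^3$ because $A_t,M\in\mathcal{F}^{3,3}$, while $\bs{f}_n\to\bs{f}$ and $\bs{U}_n\to\bs{V}$ in $L^2(\R^2\times J)^3$; moreover $\bs{U}_n(0)=\bs{U}^{(0)}_n\to\bs{U}^{(0)}$ gives $\bs{V}(0)=\bs{U}^{(0)}$. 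The interface-trace condition $\operatorname{Tr}_{\Gamma\times J}(B_\Gamma(\bs{U}_n^+,\bs{U}_n^-)^\top)=0$ also survives the limit, because for weak solutions the relevant space-time divergence lies in $L^2(\R^2_\pm\times J)^3$ and depends continuously on $(\bs{U}_n,\bs{f}_n)$, so the associated $H^{-1/2}$-traces converge. Hence $\bs{V}$ is a weak solution for $(\bs{f},\bs{U}^{(0)})$, and by uniqueness of weak solutions (Theorem~\ref{T:lin-exist} with $m=0$) we conclude $\bs{V}=\bs{U}$, i.e.\ $\bs{U}_n\to\bs{U}$ in $\mathcal{G}^0(\R^2\times J)^3$.

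The main obstacle is part i): constructing compactly supported, piecewise-smooth approximants of a merely $L^2$ initial datum that exactly satisfy the order-$1$ interface compatibility condition, which the ``vanish near $\Gamma$'' device resolves. Part ii) is then just linear $L^2$-stability of \eqref{E:u-syst-lin} combined with a density and limiting argument.
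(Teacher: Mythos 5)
Your proof is correct and follows essentially the same route as the paper: in part~i) you annihilate the datum near $\Gamma$ (a smooth cut-off $\theta_n(x_1)$ rather than the paper's characteristic function $\chi_{M_n}$, but the idea and the dominated-convergence argument are identical) and then mollify, and in part~ii) you rely on the $m=0$ estimate of Theorem~\ref{T:lin-exist}. The only minor difference is that the paper applies that estimate directly to $\bs{U}_n-\bs{U}$ (observing that this difference is itself a $\mathcal{G}^0$ weak solution for the difference data), which yields the convergence in one line and makes your Cauchy-then-identify-via-uniqueness detour unnecessary.
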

\begin{proof}
	For \textit{i)} we use that $C^\infty_c(\Omega)$ and $H^1(\Omega)$ are dense in $L^2(\Omega)$ for any domain $\Omega$. Therefore we can choose sequences $(\bs{w}_n)_n \subset \mathcal{D}(\R^2)^3$ and $(\bs{f}_n)_n \subset \mathcal{H}^1(\R^2 \times J)^3$ with $\bs{w}_n \rightarrow \bs{U}^{(0)}$ in $L^2(\R^2)^3$ and $\bs{f}_n \rightarrow \bs{f}$ in $L^2(\R^2\times J)^3$ for $n \rightarrow \infty$. 
	To guarantee the compatibility condition, we introduce the characteristic function $\chi_{M_n}$ with $M_n := \R^2 \setminus \left\{\bs{x} \in \R^2 \mv x_1 \in \left[-\tfrac{1}{n},\tfrac{1}{n}\right]\right\}$.
	By dominated convergence, one easily sees that $\bs{w}_n \chi_{M_n} \rightarrow \bs{U}^{(0)}$ in $L^2(\R^2)^3$. Since $\Gamma \cap M_n = \emptyset$ we also get that $\operatorname{Tr}_\Gamma \left(B_\Gamma \bs{w}_{n}\chi_{M_n}\right) = \bs{0}$. Now we mollify $\bs{w}_{n}\chi_{M_n}$ to produce functions $\bs{U}^{(0)}_{n} \in \mathcal{D}(\R^2)^3$ with the stated properties.
	
	The existence of $(\bs{U}_n)_n$ in assertion \textit{ii)} is a direct consequence of Theorem \ref{T:lin-exist}. To show the convergence we use that \eqref{E:u-syst-lin} is a linear problem, consequently $\bs{U}_n - \bs{U}$ is a weak solution of \eqref{E:u-syst-lin} for the data $\big(\bs{f}_n - \bs{f}, \bs{U}^{(0)}_{n} - \bs{U}^{(0)}\big)$. Estimate \eqref{E:est-lin} thus yields
	\begin{equation*}
		\norm{\bs{U}_n - \bs{U}}_{\mathcal{G}^0(\R^2\times J)^3} \leq C \Big( \norm{\bs{U}^{(0)}_{n} - \bs{U}^{(0)}}_{L^2(\R^2)^3} + \norm{\bs{f}_n - \bs{f}}_{L^2(\R^2 \times J)^3}\Big).
	\end{equation*}
	The convergence properties of $(\bs{f}_n)_n$ and $\big(\bs{U}^{(0)}_{n}\big)_n$ complete the proof. 
\end{proof}

For the quasilinear term we have to define spaces for functions whose domain of definition is not the full $\R^2 \times \R^3$, namely
\begin{align*}
	&\mathcal{ML}^{m,k}\left(\R^2,\Omega_\pm\right) := \Big\{S: \left(\R_+^2 \times \Omega_+\right) \cup \left(\R_-^2 \times \Omega_-\right) \rightarrow \R^{k \times k} \mv S^\pm \in C^m\left(\R_\pm^2 \times \Omega_\pm,\R^{k \times k}\right), \\
	& \qquad \qquad \qquad \sup_{(\bs{x},\bs{u}) \in \R_\pm^2 \times \mathcal{U}_\pm} |\partial^{\bs{\alpha}} S(\bs{x},\bs{u})| < \infty \mathrm{~for~all~compact~} \mathcal{U}_\pm \subset \Omega_\pm \mathrm{~and~} \bs{\alpha} \in \N_0^5 \mathrm{~with~} |\bs{\alpha}| \leq m \Big\},
\end{align*}
where $\Omega_\pm \subset \R^3$ are open and $S^+$ and $S^-$ are the restrictions of $S$ to $\R_+^2 \times \Omega_+$ and $\R_-^2 \times \Omega_-$, respectively. As for the spaces $\mathcal{F}$ we use the subscripts $\eta$ and ${\rm cv}$ to denote the additional conditions that the matrices in  $\mathcal{ML}^{m,k}$ are symmetric and positive definite respectively convergent. 

The reduced nonlinear Maxwell system \eqref{E:Maxw-red}, \eqref{E:IC}, \eqref{E:IFC-red-E2H3} is a special case of equation (1.7) in \cite{schnaubelt2018local} and can be written as
\begin{equation}
	\left\{
	\begin{aligned}
		\mathcal{S}(\bs{x},\bs{U}^\pm) \partial_t \bs{U}^\pm + \sum_{j=1}^2 A_j \partial_{x_j} \bs{U}^\pm &= \bs{0}, & \bs{x} &\in \R_\pm^2, t \in J, \\
		B_\Gamma \begin{pmatrix} \bs{U}^+ \\ \bs{U}^- \end{pmatrix} &= \bs{0}, & \bs{x} &\in \Gamma, t \in J, \\
		\bs{U}(0) &= \bs{U}^{(0)}, & \bs{x} &\in \R^2,
	\end{aligned}
	\right.
	\label{E:u-syst-NL}
\end{equation}
where for $\bs{v} \in \R^n$ we set
\begin{align}\label{eq:tildeS}
	\mathcal{S}(\bs{x},\bs{v})&:=\Lambda(x_1)+\epsilon_3(x_1)\theta(\bs{v}),\\
	\Lambda(x_1) &= \begin{pmatrix}
		\epsilon_1(x_1) & 0 & 0 \\
		0 & \epsilon_1(x_1) & 0\\
		0 & 0 & \upmu_0
	\end{pmatrix}, \quad \theta(\bs{v}) :=
	\begin{pmatrix}
		3v_1^2+v_2^2& 2v_1v_2 & 0 \\
		2v_1v_2 & v_1^2+3v_2^2 & 0\\
		0 & 0 & 0
	\end{pmatrix}.\notag
\end{align}

\begin{defi}[Solution of the Nonlinear Hyperbolic Problem]
	By a solution of \eqref{E:u-syst-NL} we mean a function $\bs{U} \in \mathcal{G}^1(\R^2\times J)^3 \cap L^\infty(\R^2 \times J)^3$ with $\overline{\operatorname{im} \bs{U}^\pm} \subset \Omega_\pm$ that satisfies
	\begin{align*}
		\mathcal{S}(\bs{x},\bs{U}) \partial_t \bs{U} + \sum_{j=1}^2 A_j \partial_{x_j} \bs{U} &= 0,
	\end{align*}
	for almost all $\bs{x} \in \R^2\setminus \Gamma$ and for all $t \in J$,
	$\operatorname{Tr}_{\Gamma\times J} \left(B_\Gamma \left(\bs{U}^+,\bs{U}^-\right)^\top\right) = \bs{0}$, and $\bs{U}(0) = \bs{U}^{(0)}$.
\end{defi}
\begin{remark}
	Note that a solution $\bs{U}$ of \eqref{E:u-syst-NL} in $\mathcal{G}^3(\R^2\times J)^3$ is a classical solution of \eqref{E:u-syst-NL} because of the Sobolev embedding $H^{3}(\R^2_{\pm}) \hookrightarrow C^{1}(\R^2_\pm)$. 
\end{remark}

The following local existence result for the general quasilinear system \eqref{E:u-syst-NL} follows from Proposition 7.1 and Theorem 7.1 of \cite{schnaubelt2018local}. It requires that the coefficient $\mathcal{S}$ of the quasilinear term lies in $\mathcal{ML}_{\eta,{\rm cv}}^{3,3}(\R^2,\Omega_\pm)$. This will be checked for the Maxwell system in Section \ref{S:pf-main}.
\begin{them}[Nonlinear Existence Result]
	\label{T:NL-exist} 
	Let $\eta>0$, $\Omega_\pm \subset \R^3$, and $\mathcal{S} \in \mathcal{ML}_{\eta,{\rm cv}}^{3,3}(\R^2,\Omega_\pm)$. Assume that $\bs{U}^{(0)} \in \mathcal{H}^3(\R^2)^3$ satisfies the nonlinear compatibility conditions of order $3$, see \eqref{nonlinear_compatibility_conditions}, and $\operatorname{im} \bs{U}^{(0),\pm} \subset \Omega_\pm$ with
	\begin{equation}\label{eq:kappa}
		\operatorname{dist} \left(\overline{\operatorname{im} \bs{U}^{(0),\pm}}, \partial \Omega_\pm \right) > \kappa
	\end{equation}
	for some $\kappa > 0$.	Then the following statements are true.
	\begin{enumerate}[label=\roman*)]
		\item There exists a unique solution $\bs{U} \in \mathcal{G}^3(\R^2 \times (0,t_M))^3$ of \eqref{E:u-syst-NL}, where $t_M > 0$ is the maximal existence time.
		\item If the maximal existence time is finite, then $\liminf_{t \nearrow t_M} \dist \left(\overline{\operatorname{im} \bs{U}^\pm(\cdot,t)}, \partial \Omega_\pm \right) = 0$ or $\lim_{t \nearrow t_M} \norm{\bs{U}(\cdot,t)}_{\mathcal{H}^3(\R^2)^3} = \infty$.
		\item Let $t^* \in (0,t_M)$. Then there is a constant $C > 0$ such that
		\begin{align*}
			\norm{\bs{U}}_{\mathcal{G}^3(\R^2 \times (0,t^*))^3} \leq C \norm{\bs{U}^{(0)}}_{\mathcal{H}^3(\R^2)^3}.
		\end{align*}
	\end{enumerate}
\end{them}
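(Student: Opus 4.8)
The plan is to recognize that the reduced nonlinear Maxwell system \eqref{E:u-syst-NL} is a special case of the quasilinear hyperbolic interface problems analyzed in \cite{schnaubelt2018local} and to invoke Proposition~7.1 and Theorem~7.1 there. The only formal differences are that the spatial dimension is $2$ rather than $3$ and the fields take values in $\R^3$ rather than $\R^6$; as observed in Remark~\ref{rem:spitz}, this restriction only simplifies the arguments (in particular no localization procedure is needed), so the substantive task is to check that the structural assumptions of that framework hold here.

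First I would record the algebraic structure. The coefficient of $\partial_t$ is $\mathcal{S}(\bs{x},\bs{U})$, which by assumption belongs to $\mathcal{ML}^{3,3}_{\eta,\mathrm{cv}}(\R^2,\Omega_\pm)$: it is symmetric, uniformly positive definite with constant $\eta$ on $\R^2_\pm\times\Omega_\pm$, of class $C^3$ with derivatives bounded on compact subsets, and convergent as $|\bs{x}|\to\infty$; the matrices $A_1,A_2$ are constant and symmetric. Hence the system is symmetric hyperbolic, and the convergence at infinity is exactly what makes the theory on the unbounded domain $\R^2$ applicable. The interface $\Gamma=\{x_1=0\}$ is characteristic, since the normal matrix $A_1$ has the eigenvalue $0$, but its rank is constant equal to $2$, which is the characteristic-of-constant-multiplicity setting of \cite{eller2012symmetric} on which \cite{schnaubelt2018local} is built.

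The genuinely system-specific point, and the main thing to verify, is that the interface operator $B_\Gamma$ encodes a conservative boundary condition for the problem obtained by reflecting the half-space $\R^2_-$ onto $\R^2_+$. After this reflection one gets a boundary value problem on $\R^2_+$ for $(\bs{U}^+,\bs{U}^-)$ whose normal coefficient is the block matrix $\operatorname{diag}(A_1,-A_1)$; its eigenvalues are $1,1,0,0,-1,-1$, so exactly two scalar boundary conditions are required, and $B_\Gamma$ supplies two. On $\ker B_\Gamma$ one has $w_2^+=w_2^-$ and $w_3^+=w_3^-$, whence $(\bs{w}^+)^\top A_1\bs{w}^+-(\bs{w}^-)^\top A_1\bs{w}^-=2w_2^+w_3^+-2w_2^-w_3^-=0$; thus the associated boundary quadratic form vanishes on the kernel and the condition is conservative in the sense of \cite{schnaubelt2018local}. (This is the same computation that makes the linear problem \eqref{E:u-syst-lin} solvable, cf.\ Theorem~\ref{T:lin-exist}.)

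Granting these facts, the hypotheses of \cite[Prop.~7.1 and Thm.~7.1]{schnaubelt2018local} are met: $\bs{U}^{(0)}\in\mathcal{H}^3(\R^2)^3$ obeys the nonlinear compatibility conditions of order $3$ and $\operatorname{dist}(\overline{\operatorname{im}\bs{U}^{(0),\pm}},\partial\Omega_\pm)>\kappa$. This yields assertion~\textit{i)}, the unique maximal solution $\bs{U}\in\mathcal{G}^3(\R^2\times(0,t_M))^3$, and assertion~\textit{ii)}, the continuation criterion. For~\textit{iii)}, fix $t^*\in(0,t_M)$; then $R:=\sup_{t\in[0,t^*]}\norm{\bs{U}(\cdot,t)}_{\mathcal{H}^3(\R^2)^3}<\infty$ and $\overline{\operatorname{im}\bs{U}^\pm}$ stays at a positive distance from $\partial\Omega_\pm$ on $[0,t^*]$. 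Freezing the coefficient $\mathcal{S}(\bs{x},\bs{U}(\bs{x},t))$, which then lies in $\mathcal{F}^{3,3}_{\eta,\mathrm{cv}}(\R^2\times(0,t^*))$ with norm controlled by $R$ and $\norm{\mathcal{S}}_{\mathcal{ML}^{3,3}}$, and applying the higher-order linear estimate \eqref{E:est-lin} (with zero right-hand side) followed by a Gronwall argument on $[0,t^*]$ produces $\norm{\bs{U}}_{\mathcal{G}^3(\R^2\times(0,t^*))^3}\le C\,\norm{\bs{U}^{(0)}}_{\mathcal{H}^3(\R^2)^3}$ with $C=C(\eta,\kappa,t^*,R,\norm{\mathcal{S}}_{\mathcal{ML}^{3,3}})$; this is \cite[Thm.~7.1]{schnaubelt2018local} specialized to the present system.
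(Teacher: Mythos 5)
Your proposal takes essentially the same route as the paper: invoke Proposition~7.1 and Theorem~7.1 of \cite{schnaubelt2018local} after noting the simplifications in Remark~\ref{rem:spitz} (dimension $2$, $\R^3$-valued fields, no localization) and verifying that $\mathcal{S}\in\mathcal{ML}^{3,3}_{\eta,\mathrm{cv}}(\R^2,\Omega_\pm)$ together with the structural hypotheses of that framework. The only substantive thing you add is the explicit check that $B_\Gamma$ is conservative and of constant multiplicity, namely that $(\bs{w}^+)^\top A_1\bs{w}^+-(\bs{w}^-)^\top A_1\bs{w}^-=2w_2^+w_3^+-2w_2^-w_3^-$ vanishes when $w_2^+=w_2^-,\ w_3^+=w_3^-$; this detail is left implicit in the paper's citation but is correct and consistent with the way the paper derives the result from Theorem~\ref{T:lin-exist}.
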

\begin{remark}
	As already explained in Remark~\ref{rem:spitz}, the results of \cite{schnaubelt2018local} treat a somewhat different but more
	difficult situation. Moreover, the above result does not contain the full local well-posedness and a refined blow-up condition 
	shown in \cite{schnaubelt2018local}. So Theorem~\ref{T:NL-exist} follows from Theorem~\ref{T:lin-exist} by rather standard 
	arguments, compare the proof of Theorem~6.1 of \cite{schnaubelt2018local}.
\end{remark}

\subsection*{Compatibility conditions}
For a smooth solution $\bs{U} \in \mathcal{G}^3(\R^2\times J)^3$ of \eqref{E:u-syst-NL} with $J:=(0,t_M)$ we can differentiate \eqref{E:u-syst-NL} two times in time and get new equations that are still satisfied for all $t \in J$. By continuity these new equations have to be satisfied at $t=0$. This gives us necessary conditions on the initial values for $\bs{U} \in \mathcal{G}^3(\R^2\times J)^3$.

If $\mathcal{S}(\bs{U})$ is positive definite, then $\mathcal{S}(\bs{U})$ is invertible and system \eqref{E:u-syst-NL} implies
\begin{align}
	\partial_t \bs{U} &= -\mathcal{S}(\bs{U})^{-1}\sum_{j=1}^2 A_j \partial_{x_j} \bs{U}=: \widetilde{\bs{V}}^{(1)}(\bs{U}),\label{E:function_first_cc}\\
	\llbracket U_2 \rrbracket &= \llbracket U_3 \rrbracket = 0.\notag
\end{align}
Differentiation in time gives us the following new equations:
\begin{equation}
	\partial_t^2 \bs{U} = -\mathcal{S}(\bs{U})^{-1}\bigg(\sum_{j=1}^2 A_j \partial_{x_j} \partial_t \bs{U} + \partial_t\mathcal{S}(\bs{U}) \partial_t \bs{U} \bigg) =: \widetilde{\bs{V}}^{(2)}(\bs{U},\partial_t \bs{U}), \label{E:function_second_cc}
\end{equation}	
\begin{equation*}
\llbracket \partial_t U_2 \rrbracket = \llbracket \partial_t U_3 \rrbracket = 0,
\end{equation*}
\begin{equation}
	\begin{aligned}
	\partial_t^3 \bs{U} &= -\mathcal{S}(\bs{U})^{-1}\bigg(\sum_{j=1}^2 A_j \partial_{x_j} \partial_t^2 \bs{U} + 2 \partial_t \mathcal{S}(\bs{U}) \partial_t^2 \bs{U} 
	+ \partial_t^2 \mathcal{S}(\bs{U})\partial_t \bs{U} \bigg) =: \widetilde{\bs{V}}^{(3)}(\bs{U},\partial_t \bs{U},\partial_t^2 \bs{U}), \label{E:function_third_cc}
	\end{aligned}
		\end{equation}
\begin{equation*}		
	\llbracket \partial_t^2 U_2 \rrbracket = \llbracket \partial_t^2 U_3 \rrbracket = 0.
\end{equation*}
We can now iteratively define $\bs{V}^{(1)}(\bs{U}) := \widetilde{\bs{V}}^{(1)}(\bs{U})$, $\bs{V}^{(2)}(\bs{U}) := \widetilde{\bs{V}}^{(2)}\left(\bs{U}, \bs{V}^{(1)}(\bs{U})\right)$ and $\bs{V}^{(3)}(\bs{U}) := \widetilde{\bs{V}}^{(3)}\left(\bs{U}, \bs{V}^{(1)}(\bs{U}),\bs{V}^{(2)}(\bs{U})\right)$ to get operators $\bs{V}^{(j)}$ that only contain space derivatives and no time derivatives. For the sake of completeness we also define $\bs{V}^{(0)}(\bs{U}) := \bs{U}$. 
The equations above imply that 
\begin{align*}
	\partial_t^j \bs{U}(\cdot,0) &= \bs{V}^{(j)}(\bs{U}(\cdot,0)) \quad\text{ and } \quad
	\left\llbracket V_2^{(j-1)}(\bs{U}(\cdot,0)) \right\rrbracket = \left\llbracket V_3^{(j-1)}(\bs{U}(\cdot,0)) \right\rrbracket = 0
\end{align*}
for $j \in \{1,2,3\}$. Hence, the initial values $\bs{U}^{(0)}$ have to satisfy the necessary conditions
\begin{equation}
	\left\llbracket V_2^{(j)}\left(\bs{U}^{(0)}\right) \right\rrbracket = \left\llbracket V_3^{(j)}\left(\bs{U}^{(0)}\right) \right\rrbracket = 0
	\label{nonlinear_compatibility_conditions}
\end{equation}
for $j \in \{0,1,2\}$. Note that for higher regularity additional compatibility conditions are necessary, but we will focus our analysis on solutions in $\mathcal{G}^3(\R^2\times J)^3$. 
\begin{defi}[Nonlinear Compatibility Conditions]\label{D:compat}
	Let $m \in \{1,2,3\}$. We say that an initial value $\bs{U}^{(0)} \in \mathcal{H}^m\left(\R^2\right)^3$ satisfies the nonlinear compatibility conditions of order $m$ for \eqref{E:u-syst-NL} if \eqref{nonlinear_compatibility_conditions} is true for $j \in \{0,\dots,m-1\}$. 
\end{defi}
\begin{remark}
	The compatibility conditions for the linear problem \eqref{E:u-syst-lin} can be derived analogously. In comparison to \eqref{E:function_first_cc}, \eqref{E:function_second_cc} and \eqref{E:function_third_cc} we have to replace $\mathcal{S}(\bs{U})$ by $A_t$ and include the additional terms $M(\bs{x},t)\bs{U}$, $\bs{f}$ and their time derivatives: 
	\begin{align*}
		\widetilde{\bs{V}}^{(1)}_{\mathrm{lin}}(\bs{U}) &= -A_t^{-1}\bigg( \sum_{j=1}^2 A_j \partial_{x_j} \bs{U} + M \bs{U} -\bs{f}\bigg),\\
		\widetilde{\bs{V}}^{(2)}_{\mathrm{lin}}(\bs{U},\partial_t \bs{U}) &= -A_t^{-1}\bigg( \sum_{j=1}^2 A_j \partial_{x_j} \partial_t \bs{U} + \partial_t A_t \partial_t \bs{U} + \partial_t\left(M \bs{U}\right) - \partial_t \bs{f}\bigg),\\
		\widetilde{\bs{V}}^{(3)}_{\mathrm{lin}}(\bs{U},\partial_t \bs{U},\partial_t^2 \bs{U}) &= -A_t^{-1}\bigg( \sum_{j=1}^2 A_j \partial_{x_j} \partial_t^2 \bs{U} + \partial_t^2 A_t \partial_t \bs{U} +2\partial_t A_t \partial_t^2 \bs{U} + \partial_t^2\left(M \bs{U}\right) - \partial_t^2 \bs{f}\bigg).
	\end{align*} 
\end{remark}

%---------------------------------------------------------------------------------------------------------
%---------------------------------------------------------------------------------------------------------

\section{Proof of Theorem \ref{T:main}}\label{S:pf-main}

Let $a>1$ and $\Uext$ as in \eqref{E:Uext}. We start by expressing the equation for the error 
$\eps^a\bs{R}:=\bs{U}-\Uext.$
Substituting 
\beq\label{E:Uform}
\bs{U}=\Uext+\eps^a\bs{R}
\eeq 
in \eqref{E:u-syst-NL}, one obtains
\begin{equation}
	\left\{
	\begin{aligned}
		S(\bs{x},t,\bs{R}^\pm) \partial_t \bs{R}^\pm + \sum_{j=1}^2 A_j \partial_{x_j} \bs{R}^\pm + W(\bs{x},t,\bs{R}^\pm)\bs{R}^\pm &= -\eps^{-a} \Res, & \bs{x} &\in \R_\pm^2, t \in J, \\
		B_\Gamma \begin{pmatrix} \bs{R}^+ \\ \bs{R}^- \end{pmatrix} &= \bs{0}, & \bs{x} &\in \Gamma, t \in J, \\
		\bs{R}(0) &= \bs{R}^{(0)}, & \bs{x} &\in \R^2,
	\end{aligned}
	\right.
	\label{E:R-syst}
\end{equation}
where we set $\bs{R}^{(0)}:=\eps^{-a}(\bs{U}^{(0)}-\Uext(\cdot,0))$ and, recalling \eqref{eq:tildeS},
\begin{align}
	\Res &:= \Res(\Uext) = \mathcal{S}(\cdot,\Uext)\pa_t\Uext + \sum_{j=1}^2 A_j \partial_{x_j}\Uext, \label{E: def_Res_Uext}\\
	S(\bs{x},t,\bs{R})&:=\mathcal{S}(\bs{x},\Uext(\bs{x},t)+\eps^a \bs{R})
	= \Lambda(\bs{x})+ \epsilon_3(x_1)\eps^{2a}\theta(\bs{R})+\varphi(\bs{R}), \label{E: def_S}\\
	\varphi(\bs{R}) &:= \epsilon_3(x_1)\eps^{a} 
	\begin{pmatrix} 
		6 U_{\text{ext},1}R_1 +2U_{\text{ext},2}R_2 & 2U_{\text{ext},1}R_2+2U_{\text{ext},2}R_1 & 0\\
		2U_{\text{ext},1}R_2 +2U_{\text{ext},2}R_1 & 2 U_{\text{ext},1}R_1 +6U_{\text{ext},2}R_2 &0 \\
		0 & 0&0
	\end{pmatrix} + \epsilon_3(x_1)\theta(\Uext), \nonumber\\
	W(\bs{x},t,\bs{R})\bs{R}&:=\eps^{-a}\big(S(\bs{x},t,\bs{R})-\mathcal{S}(x,\Uext(\bs{x},t))\big)\pa_t \Uext(\bs{x},t). \label{E: def_W}
\end{align}
One can check that the entries of the $3 \times 3$ matrix $W$ are
\begin{align*}
	W_{1,1}&=\epsilon_3 \left(\pa_t(3\Uextone^2+\Uexttwo^2)+3 \eps^a R_1\pa_t\Uextone+2 \eps^a R_2\pa_t\Uexttwo\right), \\ 
	W_{1,2}&= \epsilon_3 (\eps^a R_2\pa_t\Uextone+ 2 \pa_t(\Uextone\Uexttwo)), \\ 
	W_{2,1} &= \epsilon_3(\eps^a R_1\pa_t\Uexttwo+2 \pa_t(\Uextone\Uexttwo)),\\
	W_{2,2}&= \epsilon_3 \left(\pa_t(\Uextone^2+3\Uexttwo^2)+3 \eps^a R_2\pa_t\Uexttwo+2 \eps^a R_1\pa_t\Uextone\right),\\
	W_{1,3}&= W_{2,3}=W_{3,1}=W_{3,2}=W_{3,3}=0.
\end{align*}
The interface condition of \eqref{E:R-syst} is a consequence of $B_\Gamma(\Uext^+,\Uext^-)^\top=\bs{0}$, as explained in Remark \ref{R:Uext-div-IFC}.

For a fixed $\Uext$, systems \eqref{E:u-syst-NL} and \eqref{E:R-syst} are, of course, equivalent provided \eqref{E:Uform} holds. Our rough strategy is to use the local existence Theorem \ref{T:NL-exist} for \eqref{E:u-syst-NL} in order to get the existence of $\bs{R}$ on the time interval $(0,t_M)$ and then apply a bootstrapping argument on \eqref{E:R-syst} to show that $t_M=\mathcal{O}(\eps^{-2})$ and that the desired estimate
\begin{equation*}
    \norm{\bs{R}}_{\cG^3(\R^2\times (0,T_0\eps^{-2}))^3} = \varepsilon^{-a} \norm{\bs{U} - \Uext}_{\cG^3(\R^2\times (0,T_0\eps^{-2}))^3} \leq C
\end{equation*}
holds for all small enough $\eps>0$.

For the application of Theorem \ref{T:NL-exist} to \eqref{E:u-syst-NL} we need to find $\Omega_\pm \subset \R^3$ such that $\mathcal{S} \in \mathcal{ML}_{\eta,{\rm cv}}^{3,3}(\R^2,\Omega_\pm)$. For this, firstly, $\epsilon_1^\pm, \epsilon_3^\pm \in C^3(\R_\pm) \cap W^{3,\infty}(\R_\pm)$ is needed and $\epsilon_1, \epsilon_3$ have to converge for $|x_1| \rightarrow \infty$, which we have assumed in \eqref{E:ass-eps1}--\eqref{E:ass-eps3-conv}. 
Secondly, the symmetric matrix $\mathcal{S}(\bs{x},\bs{v})$ has to be positive definite for all $\bs{x}\in \R^2_\pm$ and $\bs{v}\in\Omega_\pm$, respectively.
It is easy to verify that $\mathcal{S}(\bs{v})$ has the three eigenvalues $\lambda_1 = \upmu_0$, $\lambda_2 = \epsilon_1 + \epsilon_3 \left( v_1^2 + v_2^2\right)$, and $\lambda_3 = \epsilon_1 + 3\epsilon_3 \left( v_1^2 + v_2^2\right)$. We now have to check when $\lambda_{1,2,3}\geq \eta > 0$. 

Recall the bounds on $\epsilon_1$ and $\epsilon_3$ in \eqref{E:ass-eps1} and \eqref{E:ass-eps3}. If $\epsilon_{3,m}^\pm \geq 0$, then clearly $\lambda_{2,3}>\epsilon_{1,m}^\pm$ and the choice $\eta:=\min\{\upmu_0;\epsilon_{1,m}^+;\epsilon_{1,m}^-\}$ and $\Omega_\pm :=\R^3$ is possible (and \eqref{eq:kappa} trivially holds). If $\epsilon_{3,m}^\pm < 0$, we impose
$$\epsilon_{1,m}^\pm+3\epsilon_{3,m}^\pm (v_1^2+v_2^2)>\eta>0 \ \text{ for all } \bs{v}\in \Omega_\pm.$$
Choosing $\eta\in \left(0,\min\{\upmu_0;\epsilon_{1,m}^+;\epsilon_{1,m}^-\}\right)$ and
\[%\label{E:Omega_pm}
\Omega_\pm:=\begin{cases}
	\left\{\bs{v}\in\R^3 \mv v_1^2+v_2^2 < \frac{\eta-\epsilon_{1,m}^\pm}{3\epsilon_{3,m}^\pm} \right\}, & \epsilon_{3,m}^\pm < 0, \\
	\R^3, & \epsilon_{3,m}^\pm\geq 0,
\end{cases}
\]
we infer $\mathcal{S} \in \mathcal{ML}_{\eta,{\rm cv}}^{3,3}(\R^2,\Omega_\pm)$.

We now take a solution $A \in \bigcap_{k=0}^4 C^{4-k}([0,T_0],H^{2+k}(\R))$ of the effective nonlinear Schr\"odinger equation \eqref{E:NLS} for some $T_0>0$. Choose $\bs{R}^{(0)}\in \cH^3(\R^2)^3$ and $\eps_*>0$ small enough such that $\bs{U}^{(0)}:=\Uext(\cdot,0)+\eps_*^a\bs{R}^{(0)}$ satisfies $\overline{\operatorname{im} \bs{U}^{(0),\pm}} \subset \Omega_\pm$ (which implies \eqref{eq:kappa}) and the nonlinear compatibility conditions of order $3$, see Definition \ref{D:compat}. Then the local existence Theorem~\ref{T:NL-exist} yields a maximal existence time $t_M>0$ and a solution $\bs{U}\in \cG^3(\R^2\times (0,t_M))^3$ of \eqref{E:u-syst-NL}. For $t\in [0,t_M)$ we set 
$$z(t):=\sum_{k=0}^3\|\pa_t^k\bs{R}(\cdot,t)\|^2_{\cH^{3-k}(\R^2)^3}.$$
We have $\|\bs{R}(\cdot,t)\|_{L^\infty(\R^2)^3}\le c_Sz(t)^{1/2}$ for a constant $c_S\ge1$. For the application of Theorem \ref{T:NL-exist} we need that the values of the second argument of $\mathcal{S}$, i.e., $\Uext + \eps^a \bs{R}$, remain in $\Omega_\pm$. For this we choose $\varpi>0$ with
	\beq\label{def:varpi} 
	\varpi^2< \frac1{c_S^2}\min\left\{\frac{\eta-\epsilon_{1,m}^-}{3\min\{\epsilon_{3,m}^-;0\}}; \frac{\eta-\epsilon_{1,m}^+}{3\min\{\epsilon_{3,m}^+;0\}}\right\},
	\eeq
	where $\frac{c}{0}:=+\infty$.
The major part of the rest of the proof of Theorem \ref{T:main} is a bootstrapping argument to prove the statement
\begin{equation}
	\left\{
	\begin{aligned}
		&\exists \, 1\ge \rho>\rho_0>0 \ \exists \, \eps_0 = \eps_0(\rho) \in (0, \eps_*) \ \exists \, t^* \in (0,t_M) 
		\text{ such that for all } \eps\in (0,\eps_0) \\
		&\text{we have } \eps^a\rho + \|\Uext\|_{L^\infty(\R^2\times [0,t^*])} \le \varpi
		%\min\left\{\frac{\eta-\epsilon_{1,m}^-}{3\min\{\epsilon_{3,m}^-,0\}}, \frac{\eta-\epsilon_{1,m}^+}{3\min\{\epsilon_{3,m}^+,0\}}\right\}, \text{ where } \frac{c}{0}:=+\infty; \text{ and}\\
		\text{ \ and if \ } z(0)\leq \rho_0^2 \text{ and } t^* \leq T_0\eps^{-2},\\
		&\text{then \ } z(t)\leq \rho^2 \ \text{ for all } t\in [0,t^*].
	\end{aligned}
	\right.
	\label{E:bootstrap-aim}
\end{equation}

Note that the first condition in \eqref{E:bootstrap-aim}, i.e., the smallness of $\eps^a\rho +\|\Uext\|_{L^\infty(\R^2\times [0,T_0 \eps^{-2}))^3} $, can be achieved by simply choosing $\eps_0=\eps_0(\rho)$ small enough. Together with the second condition, i.e., $z(t) \leq \rho^2$, these inequalities guarantee $\Uext(\cdot,t) + \eps^a \bs{R}(\cdot,t) \in \Omega_\pm$ for all $t\in [0,t^*]$.

To establish \eqref{E:bootstrap-aim}, we define for $1\ge\rho>\rho_0>0$
\begin{align}\label{def:Trho}
	&\begin{aligned}
	\Trho :=\sup \Big\{&t^* \ge0 : \eps^a\rho + \|\Uext\|_{L^\infty(\R^2\times [0,t^*])} \le \varpi, z(t)\leq \rho^2 \text{ for all } t\in [0,t^*),\, z(0)=\rho_0^2 \Big\}, \\
	\end{aligned}\\
	&\Jr :=[0, \Trho).\notag
\end{align}
On the time interval $\Jr$ the conditions
\begin{equation*}
	\forall\, t\in \Jr: \quad \operatorname{dist} \left(\overline{\operatorname{im} \bs{U}^{\pm}(\cdot,t)}, \partial \Omega_\pm \right) > \kappa> 0, \quad \|\bs{U}(\cdot,t)\|_{\cH^3(\R^2)^3}\le C< \infty 
\end{equation*}
are guaranteed.

We will prove in Section~\ref{S:bootstrap} that $z(t)\le \rho^2/2$ for $t\in \Jr$ and suitable $\eps_0$ and $\rho_0$ and hence $\Trho \geq T_0\eps^{-2}$
and \eqref{E:bootstrap-aim} is true. This together with the definition of $\bs{R}$ yields the estimate 
\beq\label{E:U-Uext-est}
\norm{\bs{U} - \Uext}_{\cG^3(\R^2\times (0,T_0\eps^{-2}))^3} \leq \rho \eps^{a}.
\eeq
Finally, to obtain \eqref{E:main-est} from Theorem \ref{T:main}
, it will only remain to show 
$$\norm{\Uans - \Uext}_{\cG^3(\R^2\times (0,T_0\eps^{-2}))^3}\leq C \eps^{a},$$ which is straightforward, see Section~\ref{sec:final}.

\subsection{Bootstrapping argument (proof of \eqref{E:bootstrap-aim})}\label{S:bootstrap}
We use the multi-index $\bs{\beta} := (\beta_1,\beta_2,\beta_t)^\top \in \N_0^3$, $|\bs{\beta}|\leq 3,$ and the abbreviation $\bs{r}_{\bs{\beta}} := \partial^{\bs{\beta}} \bs{R}$. Applying $\partial^{\bs{\beta}} = \partial_{x_1}^{\beta_1}\partial_{x_2}^{\beta_2}\partial_t^{\beta_t}$ to \eqref{E:R-syst} yields
\begin{equation}
	\left\{
	\begin{aligned}
		S(\bs{x},t,&\bs{R}) \partial_t \bs{r}_{\bs{\beta}} + \sum_{j=1}^2 A_j \partial_{x_j} \bs{r}_{\bs{\beta}} \\
		&= \bs{s}_{\bs{\beta}}(\bs{x},t,\bs{R}) + \bs{w}_{\bs{\beta}}(\bs{x},t,\bs{R}) - \varepsilon^{-a} \partial^{\bs{\beta}} \Res(\bs{x},t), & \bs{x} &\in \R^2\setminus\Gamma,t \in \Jr, \\
		\bs{r}_{\bs{\beta}}(\bs{x},0) &= \bs{r}^{(0)}_{\bs{\beta}}(\bs{x}) := \partial^{\bs{\beta}} \bs{R}(\bs{x},0), & \bs{x} &\in \R^2
	\end{aligned}\right.
	\label{E:rbeta-syst}
\end{equation}
with 
\begin{align*}
	\bs{s}_{\bs{\beta}}(\bs{x},t,\bs{R}) &:= - \sum_{\bs{0} < \bs{\gamma} \leq \bs{\beta}} \binom{\bs{\beta}}{\bs{\gamma}} \partial^{\bs{\gamma}} S(\bs{x},t,\bs{R}) \partial^{\bs{\beta} - \bs{\gamma}} \partial_t \bs{R},\\
	\bs{w}_{\bs{\beta}}(\bs{x},t,\bs{R}) &:= -\partial^{\bs{\beta}} \left( W(\bs{x},t,\bs{R}) \bs{R}\right).
\end{align*}
The time derivatives $\partial_t^k \bs{R}(\cdot,0)$ have to be interpreted as right-sided derivatives that satisfy
\begin{equation}
	\partial_t^j \bs{R}(\cdot,0) = \bs{V}^{(j)}(\bs{R}(\cdot,0)) = \bs{V}^{(j)}\left(\bs{R}^{(0)}\right),
	\label{E:R0-tderiv}
\end{equation}
with $\bs{V}^{(j)}$ as defined in Section \ref{S:exist}. Testing \eqref{E:rbeta-syst} with $\bs{r}_{\bs{\beta}}$ produces
\begin{equation}
	\begin{aligned}
		&\int_0^t \int_{\R^2} \Big(S(\bs{R}) \partial_t \bs{r}_{\bs{\beta}} \cdot \bs{r}_{\bs{\beta}} + \sum_{j=1}^2 A_j \partial_{x_j} \bs{r}_{\bs{\beta}} \cdot \bs{r}_{\bs{\beta}}\Big)\,\mathrm{d}\bs{x}\,\mathrm{d}s\\ 
		&= \int_0^t \int_{\R^2}\left( \bs{w}_{\bs{\beta}}(\bs{R}) \cdot \bs{r}_{\bs{\beta}} + \bs{s}_{\bs{\beta}}(\bs{R})\cdot \bs{r}_{\bs{\beta}}\right)\,\mathrm{d}\bs{x}\,\mathrm{d}s
		- \varepsilon^{-a} \int_0^t \int_{\R^2} \partial^{\bs{\beta}} \Res\cdot \bs{r}_{\bs{\beta}} \,\mathrm{d}\bs{x}\,\mathrm{d}s.
	\end{aligned}
	\label{E:rbeta-tested}
\end{equation}
The main steps of our bootstrapping argument are:
\begin{enumerate}[label=\Roman*.]
	\item Employ \eqref{E:R0-tderiv} to estimate $\norm{\bs{r}_{\bs{\beta}}^{(0)}}_{L^2(\R^2)^3}$ for all $\bs{\beta} \in \N_0^3$ with $|\bs{\beta}| \leq 3$.
	\item Based on \eqref{E:rbeta-tested}, estimate $\sum_{|\bs{\gamma}|\leq 3, \gamma_1=0}\|\pa^{\bs{\gamma}} \bs{R}(\cdot,t)\|_{L^2(\R^2)^3}^2$ using that $\int_{\R^2} \sum_{j=1}^2 A_j \partial_{x_j} \bs{r}_{\bs{\beta}} \cdot \bs{r}_{\bs{\beta}}\,\mathrm{d}\bs{x}=0$ if $\beta_1=0$.
	\item Rewrite \eqref{E:rbeta-syst} to analyze $\partial^{\bs{\beta}} R_2$ and $\partial^{\bs{\beta}} R_3$ for $\beta_1 = 1$, and then iterate the process for $\beta_1 = 2$ and $\beta_1 = 3$.
	\item Use $\nabla \cdot \partial_t \bs{\mathcal{D}}(\bs{U}_E) = 0$ to estimate $\partial^{\bs{\beta}} R_1$ for $\beta_1 = 1$, where we start with $\bs{\beta} = (1,0,0)^\top$ and then iterate to increase $\beta_t$ and $\beta_2$. Finally we have to iterate the process again for $\beta_1 = 2$ and $\beta_1 = 3$.
\end{enumerate}
Note that due to the interface at $x_1 = 0$ the interface conditions cannot be simply differentiated for all $\beta_1\neq0$, therefore the method of Step II cannot be used to estimate $x_1$-derivatives of $\bs{R}$ and Step III and IV are necessary.

Our basic strategy follows the proof of the local a priori estimates in \cite{schnaubelt2018local}. The main difference is that, using the structure of our ansatz,
we can derive the estimates on a large time interval $[0, T_0\eps^{-2})$ with the desired dependence on $\eps$. We let $t \in \Jr$.

\subsubsection*{Step I: Estimates of the initial values}
\label{S:est-IC}
In this section we estimate $\norm{\bs{r}_{\bs{\beta}}(\cdot,0)}_{L^2(\R^2)^3}$ for all $\bs{\beta} \in \N_0^3$ with $|\bs{\beta}|\leq 3$. For $\beta_t= 0$ we already have by the assumption $z(0)\leq \rho_0^2$ in \eqref{E:bootstrap-aim} that 
$$\norm{\bs{R}(\cdot,0)}_{\mathcal{H}^3(\R^2)^3} = \norm{\bs{R}^{(0)}}_{\mathcal{H}^3(\R^2)^3} < \rho_0.$$
If $\beta_t \neq 0$ we will use \eqref{E:R0-tderiv} to estimate $\partial_t^p \bs{R}(\cdot,0)$ in $\mathcal{H}^{3-p}(\R^2)^3$ for $p \in \{1,2,3\}$. 

Since $\bs{U}^{(0)}$ satisfies the nonlinear compatibility conditions of order $3$ we know from Section \ref{S:exist} that (suppressing the $\bs{x}$-dependence)
\begin{equation*}
	\partial_t^j \bs{U}(t) = \bs{V}^{(j)}\left( \bs{U}(t) \right)
\end{equation*}
for all $t \in [0,t_M)$ and $j \in \{0,1,2\}$. With $\bs{U} = \varepsilon^a \bs{R} + \Uext$, as in \eqref{E:R-syst} we rewrite these three equations as 
\begin{equation}
	\begin{aligned}
		\partial_t \bs{R} &= -\mathcal{S}(\bs{U})^{-1}\left(\sum_{j=1}^2 A_j \partial_{x_j} \bs{R} + \varepsilon^{-a} \Res+ \varepsilon^{-a} \left(\mathcal{S}(\bs{U}) - \mathcal{S}(\bs{U}_{{\rm ext}})\right)\partial_t \bs{U}_{{\rm ext}}\right) , \\
		\partial_t^2 \bs{R} &= -\mathcal{S}(\bs{U})^{-1}\left(\sum_{j=1}^2 A_j \partial_{x_j} \partial_t \bs{R} + \partial_t\mathcal{S}(\bs{U}) \partial_t \bs{R} + \varepsilon^{-a} \partial_t \Res\right) \\
		&\quad \ -\varepsilon^{-a} \mathcal{S}(\bs{U})^{-1}\left(\partial_t \left(\mathcal{S}(\bs{U}) - \mathcal{S}(\bs{U}_{{\rm ext}})\right)\partial_t \bs{U}_{{\rm ext}} + \left(\mathcal{S}(\bs{U}) - \mathcal{S}(\bs{U}_{{\rm ext}})\right)\partial_t^2 \bs{U}_{{\rm ext}} \right), \\
		\partial_t^3 \bs{R} &= -\mathcal{S}(\bs{U})^{-1}\!\left(\!\sum_{j=1}^2 A_j \partial_{x_j} \partial_t^2 \bs{R} + 2 \partial_t \left(\mathcal{S}(\bs{U})\right) \partial_t^2 \bs{R} + \partial_t^2 \left(\mathcal{S}(\bs{U})\right)\partial_t \bs{R} + \varepsilon^{-a} \partial_t^2 \Res \!\right)\\
		&\begin{aligned}\quad \ -\varepsilon^{-a} \mathcal{S}(\bs{U})^{-1} &\left(\partial_t^2 \left(\mathcal{S}(\bs{U}) - \mathcal{S}(\bs{U}_{{\rm ext}})\right)\partial_t \bs{U}_{{\rm ext}} + 2 \partial_t \left(\mathcal{S}(\bs{U}) - \mathcal{S}(\bs{U}_{{\rm ext}})\right)\partial_t^2 \bs{U}_{{\rm ext}}\right. \\
			& \left.\ + \left(\mathcal{S}(\bs{U}) - \mathcal{S}(\bs{U}_{{\rm ext}})\right)\partial_t^3 \bs{U}_{{\rm ext}}\right). 
		\end{aligned}	
	\end{aligned}
	\label{E:est-IC1}
\end{equation}
The following lemma collects some properties of the matrix function $\mathcal{S}$.
Using basic properties of Sobolev spaces and the mean value theorem, the lemma can be shown as Lemma~2.23, Lemma~7.1 and Corollary~7.2 in \cite{spitz2017local}. 
\begin{lem}\label{L:T-est}~
	Let $T', \eta_0, R >0$, $\Omega_\pm \subset \R^3$ and $\mathcal{S} \in \mathcal{ML}_{\eta_0,{\rm cv}}^{3,3}(\R^2,\Omega_\pm)$. Then for all $\bs{U}, \bs{V} \in B_R(0) \subset \mathcal{G}^3(\R^2\times [0,T'])^3$ with $\overline{\operatorname{im} \bs{U}^{\pm}}, \overline{\operatorname{im} \bs{V}^{\pm}} \subset \Omega_\pm$ there exists $C>0$ such that
	\begin{enumerate}[label=\roman*)]
		\item $\norm{\mathcal{S}(\bs{U}(t))^{-1}}_{\mathcal{W}^{2,\infty}(\R^2)^{3 \times 3}+\mathcal{H}^2(\R^2)^{3 \times 3}}\le C,$
		\item $\norm{\partial_t^k \mathcal{S}(\bs{U}(t))}_{ \mathcal{W}^{3-k,\infty}(\R^2)^{3 \times 3}+\mathcal{H}^{3-k}(\R^2)^{3 \times 3}}\le C$,
		\item $\norm{\partial_t^k \left(\mathcal{S}(\bs{U}(t)) - \mathcal{S}(\bs{V}(t)) \right)}_{\mathcal{H}^{2-k}(\R^2)^{3 \times 3}} \leq C \sum\limits_{j=0}^k \norm{\partial_t^j \bs{U}(t) - \partial_t^j \bs{V}(t)}_{\mathcal{H}^{2-k}(\R^2)^3}$ 
	\end{enumerate}
	for all $k \in \{0,1,2\}$ and $t \in [0,T']$.
\end{lem}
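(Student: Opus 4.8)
The plan is to reduce all three assertions to the product (Moser-type) estimates of Appendix~\ref{A:products}, the Sobolev embeddings $\mathcal{H}^2(\R^2)\hookrightarrow L^\infty(\R^2)$ and $\mathcal{H}^3(\R^2)\hookrightarrow W^{1,\infty}(\R^2)$, and one composition estimate, which is essentially Lemma~2.23 of~\cite{spitz2017local}: if $g^\pm\in C^m(\R^2_\pm\times\Omega_\pm)$ (with $m\le3$) has all derivatives of order $\le m$ bounded on $\R^2_\pm\times\mathcal{K}_\pm$ for every compact $\mathcal{K}_\pm\subset\Omega_\pm$, and $g(\bs{x},\bs{v})$ converges as $|x_1|\to\infty$ to a matrix $g_\infty^\pm(\bs{v})$ depending only on $\bs{v}$, then for every $\bs{W}\in\mathcal{H}^m(\R^2)^3$ with $\operatorname{im}\bs{W}^\pm$ in a fixed compact subset of $\Omega_\pm$ one has $g(\cdot,\bs{W}(\cdot))\in\mathcal{W}^{m,\infty}(\R^2)+\mathcal{H}^m(\R^2)$, with norm bounded by a continuous increasing function of $\|\bs{W}\|_{\mathcal{H}^m(\R^2)^3}$ and of the distance of $\operatorname{im}\bs{W}^\pm$ to $\partial\Omega_\pm$. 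This is proved by splitting $g(\bs{x},\bs{v})=g_\infty^\pm(\bs{v})+\big(g(\bs{x},\bs{v})-g_\infty^\pm(\bs{v})\big)$: the remainder has bounded derivatives that decay as $|x_1|\to\infty$, so evaluated at $\bs{W}$ it lies in $\mathcal{W}^{m,\infty}$; and since functions in $\mathcal{H}^m(\R^2)$ vanish at spatial infinity we have $\bs{0}\in\overline{\operatorname{im}\bs{W}^\pm}\subset\Omega_\pm$, so $g_\infty^\pm(\bs{W})-g_\infty^\pm(\bs{0})\in\mathcal{H}^m$ by a standard composition estimate (induction on $m$ using the chain rule and the product estimates), while $g_\infty^\pm(\bs{0})$ is constant.

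Granting this, I would argue as follows. Since $\bs{U},\bs{V}\in B_R(0)\subset\mathcal{G}^3(\R^2\times[0,T'])^3$, the traces $\partial_t^j\bs{U}(\cdot,t)$ and $\partial_t^j\bs{V}(\cdot,t)$ are bounded in $\mathcal{H}^{3-j}(\R^2)^3$ uniformly in $t\in[0,T']$ for $j\le3$, and by the embeddings their images lie in a compact $\mathcal{K}_\pm\subset\Omega_\pm$ (made uniform over $B_R(0)$ via $\overline{\operatorname{im}\bs{U}^\pm}\subset\Omega_\pm$; in the bootstrap of Section~\ref{S:pf-main} uniformity is anyway guaranteed by the $\kappa$-distance bound). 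Assertion ii) with $k=0$ is the composition estimate applied to $g=\mathcal{S}$, $\bs{W}=\bs{U}(\cdot,t)$, $m=3$. For $k\in\{1,2\}$ the chain rule writes $\partial_t^k\big(\mathcal{S}(\bs{x},\bs{U}(\bs{x},t))\big)$ as a finite sum of terms $\big((\partial_{\bs{v}}^{\ell}\mathcal{S})(\bs{x},\bs{U}(\bs{x},t))\big)\,\partial_t^{j_1}\bs{U}\cdots\partial_t^{j_\ell}\bs{U}$ with $1\le\ell\le k$, $j_i\ge1$ and $j_1+\dots+j_\ell=k$; each composed factor lies in $\mathcal{W}^{3,\infty}+\mathcal{H}^3$ by the composition estimate applied to $\partial_{\bs{v}}^\ell\mathcal{S}$ (which is $C^{3-\ell}$ with the required bounds), and each $\partial_t^{j_i}\bs{U}(\cdot,t)\in\mathcal{H}^{3-j_i}(\R^2)^3\subset\mathcal{H}^{3-k}(\R^2)^3$, so the product estimates of Appendix~\ref{A:products} place the whole term in $\mathcal{H}^{3-k}(\R^2)^{3\times3}$, hence in $\mathcal{W}^{3-k,\infty}+\mathcal{H}^{3-k}$, with a uniform bound (this actually yields $\mathcal{H}^{3-k}$-regularity, which is stronger than stated).

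For assertion i) set $\Phi(\bs{x},\bs{v}):=\mathcal{S}(\bs{x},\bs{v})^{-1}$. Since $\mathcal{S}$ is symmetric with all eigenvalues $\ge\eta_0$, we have $\det\mathcal{S}\ge\eta_0^3>0$, so by Cramer's rule and the smoothness of matrix inversion away from $\det=0$ the map $\Phi^\pm$ is $C^3$ on $\R^2_\pm\times\Omega_\pm$ with derivatives up to order three bounded on $\R^2_\pm\times\mathcal{K}_\pm$ and convergent as $|x_1|\to\infty$; hence $\Phi$ satisfies the hypotheses of the composition estimate, whose $k=0$ case (with $m=2$, which is what is claimed) gives $\mathcal{S}(\bs{U}(t))^{-1}\in\mathcal{W}^{2,\infty}+\mathcal{H}^2$ with the required bound. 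For assertion iii) I use the mean value theorem,
\[
\mathcal{S}(\bs{x},\bs{U})-\mathcal{S}(\bs{x},\bs{V})=\int_0^1(\partial_{\bs{v}}\mathcal{S})\big(\bs{x},\bs{V}+s(\bs{U}-\bs{V})\big)\,\mathrm{d}s\;(\bs{U}-\bs{V}),
\]
noting that the segment between $\bs{V}(\bs{x},t)$ and $\bs{U}(\bs{x},t)$ stays in a fixed compact subset of $\Omega_\pm$ (after slightly shrinking $\Omega_\pm$, or by the $\kappa$-bound in the application). Applying $\partial_t^k$ and the Leibniz rule expands the left-hand side into a sum of products of a composed factor of type $(\partial_{\bs{v}}^{\ell+1}\mathcal{S})(\cdot,\bs{V}+s(\bs{U}-\bs{V}))$, uniformly in $s\in[0,1]$ bounded in $\mathcal{W}^{2,\infty}+\mathcal{H}^2$ by the composition estimate, with time derivatives of order $\le k$ of $\bs{U}$, of $\bs{V}$ and of $\bs{U}-\bs{V}$, at least one factor being a time derivative of $\bs{U}-\bs{V}$ of order $\le k$; the product estimates of Appendix~\ref{A:products} then bound each term in $\mathcal{H}^{2-k}(\R^2)^{3\times3}$ by $C\sum_{j=0}^k\|\partial_t^j\bs{U}(t)-\partial_t^j\bs{V}(t)\|_{\mathcal{H}^{2-k}(\R^2)^3}$, which is assertion iii).

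The main obstacle is bookkeeping rather than anything conceptual: in the $k=2$ expansions one must verify that every product of composed derivatives of $\mathcal{S}$ with low-order time derivatives of $\bs{U}$ (resp.\ of $\bs{U}-\bs{V}$) lands in the correct low-regularity half-space space, using $\mathcal{H}^1(\R^2)\hookrightarrow L^q(\R^2)$ for all $q<\infty$ in two dimensions together with $\mathcal{H}^2\hookrightarrow L^\infty$, and that the $\mathcal{W}^{m,\infty}$- and $\mathcal{H}^m$-summands of the decomposition $\mathcal{W}^{m,\infty}+\mathcal{H}^m$ are propagated consistently through products; this is exactly what Lemma~7.1 and Corollary~7.2 of~\cite{spitz2017local} carry out, and I would invoke them for the routine details.
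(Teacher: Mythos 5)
Your proof is correct and takes essentially the same route as the paper, which disposes of this lemma in one line by citing Lemma~2.23, Lemma~7.1, and Corollary~7.2 of \cite{spitz2017local} together with ``basic properties of Sobolev spaces and the mean value theorem''; you have reconstructed precisely the arguments behind those references. Regarding your parenthetical worry in part~iii): the segment from $\bs{V}(\bs{x},t)$ to $\bs{U}(\bs{x},t)$ does stay inside $\Omega_\pm$ because the domains used in this paper (either $\R^3$ or the solid cylinder $\{v_1^2+v_2^2<c\}$) are convex, so no shrinking of $\Omega_\pm$ is needed.
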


We can now go back to equations \eqref{E:est-IC1} and use Lemma \ref{L:products}, Lemma \ref{L:T-est} and $\varepsilon^a \bs{R} = \bs{U} - \bs{U}_{{\rm ext}}$ to show
\begin{align*}
	&\norm{\partial_t \bs{R}(\cdot,0)}_{\mathcal{H}^2(\R^2)^3} \\
	&\ \leq C\norm{\mathcal{S}(\bs{U}(0))^{-1}}_{\mathcal{W}^{2,\infty}(\R^2)^{3 \times 3} + \mathcal{H}^2(\R^2)^{3 \times 3}} \left( \norm{\bs{R}(\cdot,0)}_{\mathcal{H}^3(\R^2)^3} + \varepsilon^{-a} \norm{\Res(\cdot,0)}_{\mathcal{H}^2(\R^2)^3}\right)\\
	&\qquad \ + C \varepsilon^{-a} \norm{\varepsilon^a \bs{R}(\cdot,0)}_{\mathcal{H}^2(\R^2)^3}\norm{\partial_t \bs{U}_{{\rm ext}}(\cdot,0)}_{\mathcal{H}^2(\R^2)^3}\\
	&\ \leq C \left(\norm{\bs{R}^{(0)}}_{\mathcal{H}^3(\R^2)^3} + \varepsilon^{-a} \norm{\Res(\cdot,0)}_{\mathcal{H}^{2}(\R^2)^3}\right).
\end{align*}
The remaining two estimates follow analogously: 
\begin{align*}
	\norm{\partial_t^2 \bs{R}(\cdot,0)}_{\mathcal{H}^1(\R^2)^3} &\leq C \left(\norm{\bs{R}(\cdot,0)}_{\mathcal{H}^3(\R^2)^3} + \norm{\partial_t \bs{R}(\cdot,0)}_{\mathcal{H}^2(\R^2)^3} + \varepsilon^{-a} \norm{\partial_t \Res(\cdot,0)}_{\mathcal{H}^1(\R^2)^3}\right), \\
	\norm{\partial_t^3 \bs{R}(\cdot,0)}_{L^2(\R^2)^3} &\leq C \left(\norm{\bs{R}(\cdot,0)}_{\mathcal{H}^3(\R^2)^3} + \norm{\partial_t \bs{R}(\cdot,0)}_{\mathcal{H}^2(\R^2)^3}+ \norm{\partial_t^2 \bs{R}(\cdot,0)}_{\mathcal{H}^1(\R^2)^3} \right.\\
	&\qquad \left. + \varepsilon^{-a} \norm{\partial_t^2 \Res(\cdot,0)}_{L^2(\R^2)^3}\right). 
\end{align*}

Finally we use the recursive structure of the estimates and obtain
\begin{align*}
	\norm{\partial_t^p \bs{R}(\cdot,0)}_{\mathcal{H}^{3-p}(\R^2)^3} &\leq C \bigg(\norm{\bs{R}^{(0)}}_{\mathcal{H}^3(\R^2)^3} + \varepsilon^{-a} \sum_{j=0}^{p-1}\norm{\partial_t^j \Res(\cdot,0)}_{\mathcal{H}^{2-j}(\R^2)^3}\bigg)
\end{align*}
for all $p \in \{1,2,3\}$. With our estimate for the residual, see \eqref{E:Res-est}, we infer
\begin{equation}
	\norm{\bs{r}^{(0)}_{\bs{\beta}}}_{L^2(\R^2)^3} \leq C \left(\rho_0 + \varepsilon^{\frac{7}{2} -a}\right)
	\label{E:est-IC2}
\end{equation}
for all $\bs{\beta} \in \N_0^3$ with $|\bs{\beta}|\leq 3$.

\subsubsection*{Step II: Analysis of $\bs{\partial^{\beta} R}$ for $\bs{|\beta|\leq 3, \beta_1 = 0}$}
We first show an energy estimate for the $t$- and $x_2$-derivatives of $\bs{R}$.
\begin{lem}
	Let $\bs{R} \in \mathcal{G}^3(\R^2\times \Jr)^3$ be a solution of \eqref{E:R-syst}, let $\bs{\beta} \in \N_0^3, |\bs{\beta}|\leq 3, \beta_1=0$ and recall that $S$ as defined in \eqref{E: def_S} is positive definite with ellipticity constant $\eta$. Then $\bs{r}_{\bs{\beta}} = \partial^{\bs{\beta}}\bs{R}$ satisfies 
	\begin{equation}
		\begin{aligned}
			\frac{\eta}{2} \norm{\bs{r}_{\bs{\beta}}(\cdot,t)}_{L^2(\R^2)^3}^2 &\leq C \norm{\bs{r}^{(0)}_{\bs{\beta}}}_{L^2(\R^2)^3}^2+ \int_0^t \int_{\R^2} \Big(\bs{w}_{\bs{\beta}}(\bs{R}) \cdot \bs{r}_{\bs{\beta}} + \bs{s}_{\bs{\beta}}(\bs{R}) \cdot \bs{r}_{\bs{\beta}}  \\
			& \quad \ + \frac{1}{2} \partial_t S(\bs{R}) \bs{r}_{\bs{\beta}} \cdot \bs{r}_{\bs{\beta}} - \varepsilon^{-a} \partial^{\bs{\beta}} \Res \cdot \bs{r}_{\bs{\beta}}\Big)\, \mathrm{d}\bs{x}\,\mathrm{d}s
		\end{aligned} 
		\label{E:est_deriv_error}
	\end{equation}
	for every $t \in \Jr$. 
\end{lem}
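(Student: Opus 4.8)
The plan is to start from the tested identity \eqref{E:rbeta-tested} and to recast its two left-hand side terms into the shape of \eqref{E:est_deriv_error}. First I would show that the flux term $\int_0^t\int_{\R^2}\sum_{j=1}^2 A_j\partial_{x_j}\bs{r}_{\bs{\beta}}\cdot\bs{r}_{\bs{\beta}}\,\mathrm{d}\bs{x}\,\mathrm{d}s$ vanishes. Since $\beta_1=0$, the operator $\partial^{\bs{\beta}}=\partial_{x_2}^{\beta_2}\partial_t^{\beta_t}$ is tangential to $\Gamma$, so differentiating the interface conditions $\llbracket R_2\rrbracket=\llbracket R_3\rrbracket=0$ gives $\llbracket (\bs{r}_{\bs{\beta}})_2\rrbracket=\llbracket (\bs{r}_{\bs{\beta}})_3\rrbracket=0$ on $\Gamma\times\Jr$. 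Using that $A_1,A_2$ are constant and symmetric, I would write $A_j\partial_{x_j}\bs{v}\cdot\bs{v}=\tfrac12\partial_{x_j}(A_j\bs{v}\cdot\bs{v})$, integrate over each half-space $\R^2_\pm$ separately, discard the contributions at spatial infinity (by the $L^2$-decay of $\bs{r}_{\bs{\beta}}(\cdot,s)$), observe that the $x_2$-integration leaves no boundary term, and collect the two interface contributions into $-\tfrac12\int_\Gamma \llbracket A_1\bs{r}_{\bs{\beta}}\cdot\bs{r}_{\bs{\beta}}\rrbracket\,\mathrm{d}x_2$. Because $A_1\bs{v}\cdot\bs{v}=2v_2v_3$ and $(\bs{r}_{\bs{\beta}})_2,(\bs{r}_{\bs{\beta}})_3$ are continuous across $\Gamma$, this jump is zero, so the whole flux term drops out.

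Next I would handle the term $\int_0^t\int_{\R^2}S(\bs{R})\partial_t\bs{r}_{\bs{\beta}}\cdot\bs{r}_{\bs{\beta}}\,\mathrm{d}\bs{x}\,\mathrm{d}s$. By symmetry of $S$ one has the pointwise identity $S(\bs{R})\partial_t\bs{r}_{\bs{\beta}}\cdot\bs{r}_{\bs{\beta}}=\tfrac12\partial_t\bigl(S(\bs{R})\bs{r}_{\bs{\beta}}\cdot\bs{r}_{\bs{\beta}}\bigr)-\tfrac12\,\partial_t S(\bs{R})\,\bs{r}_{\bs{\beta}}\cdot\bs{r}_{\bs{\beta}}$. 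Integrating in $t$ produces a terminal term $\tfrac12\int_{\R^2}S(\bs{R})(\cdot,t)\bs{r}_{\bs{\beta}}(\cdot,t)\cdot\bs{r}_{\bs{\beta}}(\cdot,t)\,\mathrm{d}\bs{x}$, bounded below by $\tfrac{\eta}{2}\norm{\bs{r}_{\bs{\beta}}(\cdot,t)}_{L^2(\R^2)^3}^2$ since $S$ is positive definite with ellipticity constant $\eta$ on the range attained on $\Jr$ (guaranteed by the choice of $\Omega_\pm$ and $\varpi$); an initial term $-\tfrac12\int_{\R^2}S(\bs{R})(\cdot,0)\bs{r}_{\bs{\beta}}^{(0)}\cdot\bs{r}_{\bs{\beta}}^{(0)}\,\mathrm{d}\bs{x}$, whose modulus is at most $C\norm{\bs{r}_{\bs{\beta}}^{(0)}}_{L^2(\R^2)^3}^2$ by the $L^\infty$-bound on $S(\cdot,0)$ from Lemma~\ref{L:T-est}; and a remainder $-\tfrac12\int_0^t\int_{\R^2}\partial_t S(\bs{R})\,\bs{r}_{\bs{\beta}}\cdot\bs{r}_{\bs{\beta}}\,\mathrm{d}\bs{x}\,\mathrm{d}s$ which I would move to the right-hand side of \eqref{E:rbeta-tested}. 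Substituting the vanishing flux term and this decomposition into \eqref{E:rbeta-tested} and rearranging produces \eqref{E:est_deriv_error} immediately.

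The main obstacle, and essentially the only nontrivial point, is the rigorous justification of the integration by parts in time (and of the very identity \eqref{E:rbeta-tested}) when $|\bs{\beta}|=3$: then $\partial_t\bs{r}_{\bs{\beta}}$ involves a fourth-order derivative of $\bs{R}$ which need not lie in $\mathcal{G}^3(\R^2\times\Jr)^3$. I would handle this as in the local theory underlying Theorem~\ref{T:NL-exist}: invertibility of $\mathcal{S}(\bs{R})$ lets one trade one time derivative for spatial derivatives via \eqref{E:R-syst}, so that $t\mapsto\int_{\R^2}S(\bs{R})(\cdot,t)\bs{r}_{\bs{\beta}}(\cdot,t)\cdot\bs{r}_{\bs{\beta}}(\cdot,t)\,\mathrm{d}\bs{x}$ is absolutely continuous (note $\bs{r}_{\bs{\beta}}\in C(\overline{\Jr},L^2(\R^2)^3)$ for every admissible $\bs{\beta}$ because $\bs{R}\in\mathcal{G}^3$), and then one approximates $\bs{R}$ by the smoother solutions furnished by Lemma~\ref{L:approx-arg}, for which all manipulations are legitimate, and passes to the limit in $\mathcal{G}^0(\R^2\times\Jr)^3$. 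The remaining ingredients — vanishing of the flux term, the product rule, and the ellipticity bound — are routine and stable under this limit.
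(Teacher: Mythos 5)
Your proof is correct and follows essentially the same route as the paper: you start from the tested identity, kill the flux term by the differentiated tangential interface conditions, split the $S\partial_t\bs{r}_{\bs{\beta}}\cdot\bs{r}_{\bs{\beta}}$ term by symmetry of $S$, invoke positive definiteness for the terminal term and an $L^\infty$-bound for the initial term, and handle the regularity obstruction at $|\bs{\beta}|=3$ by approximation via Lemma~\ref{L:approx-arg} followed by a $\mathcal{G}^0$-limit. The two cosmetic differences are: the paper evaluates $\sum_j A_j\partial_{x_j}\bs{r}_{\bs{\beta}}\cdot\bs{r}_{\bs{\beta}}$ explicitly and integrates to zero componentwise, whereas you phrase it more abstractly as $\tfrac12\partial_{x_j}(A_j\bs{v}\cdot\bs{v})$ plus a vanishing interface jump of $2r_{\bs{\beta},2}r_{\bs{\beta},3}$ --- both are fine. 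Also, for $|\bs{\beta}|=3$ the object that is actually approximated is $\bs{r}_{\bs{\beta}}$, viewed as a weak solution of the linear problem $S(\bs{R})\partial_t\bs{v}+\sum_j A_j\partial_{x_j}\bs{v}=\bs{f}$ with frozen coefficient $A_t=S(\bs{R})$ and $M=0$, not $\bs{R}$ itself; your phrase ``one approximates $\bs{R}$'' should be ``one approximates $\bs{r}_{\bs{\beta}}$'', and the parenthetical appeal to trading time for space derivatives via the equation is unnecessary once you run the approximation --- but the plan you describe is the paper's plan.
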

\begin{proof}
	Step 1. Let us first study the case $|\bs{\beta}|<3.$ Since $\bs{R}\in \mathcal{G}^3(\R^2\times \Jr)^3$, we have $\bs{r}_{\bs{\beta}} = \partial^{\bs{\beta}} \bs{R} \in \mathcal{G}^1(\R^2\times \Jr)^3$.
	To employ \eqref{E:rbeta-tested}, we use that $S$ is symmetric and compute
	\begin{align*}
		\int_0^t \int_{\R^2} S(\bs{R}) \partial_t \bs{r}_{\bs{\beta}} \cdot \bs{r}_{\bs{\beta}}\,\mathrm{d}\bs{x}\,\mathrm{d}s &= \frac{1}{2}\int_0^t \partial_t \left(\int_{\R^2} S(\bs{R}) \bs{r}_{\bs{\beta}} \cdot \bs{r}_{\bs{\beta}}\,\mathrm{d}\bs{x}\right)\,\mathrm{d}s - \frac{1}{2} \int_0^t \int_{\R^2} \partial_t S(\bs{R}) \bs{r}_{\bs{\beta}} \cdot \bs{r}_{\bs{\beta}}\,\mathrm{d}\bs{x}\,\mathrm{d}s.
	\end{align*}
	Using that $S(\bs{R})$ is positive definite, we estimate $\int_{\R^2} S(\bs{R})(t) \bs{r}_{\bs{\beta}}(t) \cdot \bs{r}_{\bs{\beta}}(t)\,\mathrm{d}\bs{x}\geq \eta \norm{\bs{r}_{\bs{\beta}}(\cdot,t)}_{L^2(\R^2)^3}^2$. Moreover, we have $$\int_{\R^2} S(\bs{R})(0) \bs{r}_{\bs{\beta}}(0) \cdot \bs{r}_{\bs{\beta}}(0)\,\mathrm{d}\bs{x}\leq\norm{S\left(\bs{R}^{(0)}\right)}_{L^\infty(\R^2)^{3\times 3}} \norm{\bs{r}^{(0)}_{\bs{\beta}}}_{L^2(\R^2)^3}^2.$$ Since $\bs{R}^{(0)}, \bs{U}_{{\rm ext}}^{(0)} \in L^\infty(\R^2)^3$, this leads to
	\begin{align*}
		\frac{\eta}{2} \norm{\bs{r}_{\bs{\beta}}(\cdot,t)}_{L^2(\R^2)^3}^2 &\leq C 	\norm{\bs{r}^{(0)}_{\bs{\beta}}}_{L^2(\R^2)^3}^2 + \int_0^t \int_{\R^2} S(\bs{R}) \partial_t \bs{r}_{\bs{\beta}} \cdot \bs{r}_{\bs{\beta}}\,\mathrm{d}\bs{x}\,\mathrm{d}s \\
		& \quad +\frac{1}{2} \int_0^t \int_{\R^2} \partial_t S(\bs{R})\bs{r}_{\bs{\beta}} \cdot 	\bs{r}_{\bs{\beta}}\,\mathrm{d}\bs{x}\,\mathrm{d}s .
		%\label{estimate_r_beta_with_definiteness}
	\end{align*}
	An integration by parts yields
	\begin{align*}
		&\int_{\R^2} \sum_{j=1}^2 A_j \partial_{x_j} \bs{r}_{\bs{\beta}}\cdot \bs{r}_{\bs{\beta}}\,\mathrm{d}\bs{x} = \int_{\R^2} \left(-\partial_{x_2} r_{\bs{\beta},3} r_{\bs{\beta},1} +\partial_{x_1} r_{\bs{\beta},3} r_{\bs{\beta},2} - \partial_{x_2} r_{\bs{\beta},1} r_{\bs{\beta},3} + \partial_{x_1} r_{\bs{\beta},2} r_{\bs{\beta},3}\right) \, \mathrm{d}\bs{x} = 0,
	\end{align*}
	employing differentiated interface conditions
	\begin{equation}\label{E:interface_cond_rbeta}
		\llbracket r_{\bs{\beta},2} \rrbracket = \llbracket r_{\bs{\beta},3} \rrbracket = 0
	\end{equation}
	in the $x_1$-integral. The interface conditions can be differentiated since $\beta_1 = 0$. Now \eqref{E:est_deriv_error}
	is a consequence of \eqref{E:rbeta-tested} and the above formulas.
	
	Step 2. Next, we consider the remaining case $|\bs{\beta}| = 3, \beta_1=0$. Let $\bs{f} := \bs{s}_{\bs{\beta}}(\bs{R}) + \bs{w}_{\bs{\beta}}(\bs{R}) - \varepsilon^{-a} \partial^{\bs{\beta}} \Res$. The differential equation in \eqref{E:rbeta-syst} becomes
	\begin{equation}
		S(\bs{R}) \partial_t \bs{r}_{\bs{\beta}} + \sum_{j=1}^2 A_j \partial_{x_j} \bs{r}_{\bs{\beta}} = \bs{f}, \quad \bs{x} \in \R^2\setminus\Gamma, \ t \in \Jr.
		\label{E:appr-prob}
	\end{equation}
	Since $S(\bs{R}) \in \mathcal{F}^{3,3}_{\eta, {\rm cv}}(\R^2 \times \Jr)$ and $\bs{f} \in \mathcal{G}^0(\R^2 \times \Jr)^3$ (which can easily be shown), we can apply Lemma \ref{L:approx-arg} to \eqref{E:appr-prob} (setting $A_t := S(\bs{R})$ and $M:=0$). Because $\bs{r}_{\bs{\beta}}$ is a weak solution of \eqref{E:appr-prob} with the initial conditions $\bs{r}_{\bs{\beta}}(\bs{x},0) = \partial^{\bs{\beta}} \bs{R}(\bs{x},0)$ and the interface conditions \eqref{E:interface_cond_rbeta}, the lemma provides sequences $\left(\bs{r}_{\bs{\beta},n}^{(0)}\right)_n \subset \mathcal{D}(\R^2)^3$, $(\bs{f}_n)_n \subset \mathcal{H}^1(\R^2 \times \Jr)^3$ and $(\bs{r}_{\bs{\beta},n})_n \subset \mathcal{G}^1(\R^2\times \Jr)^3$ with $\bs{r}^{(0)}_{\bs{\beta},n} \rightarrow \bs{r}^{(0)}_{\bs{\beta}}$ in $L^2(\R^2)^3$, $\bs{f}_n \rightarrow \bs{f}$ in $L^2(\R^2 \times \Jr)^3$ and $\bs{r}_{\bs{\beta},n} \rightarrow \bs{r}_{\bs{\beta}}$ in $\mathcal{G}^0(\R^2\times \Jr)^3$ for $n \rightarrow \infty$, and $\bs{r}_{\bs{\beta},n}$ is a weak solution of \eqref{E:appr-prob} with data $\left(\bs{f}_n,\bs{r}^{(0)}_{\bs{\beta},n}\right)$ for all $n \in \N$.
	The same calculation as in Step 1 shows that 
	\begin{align*}
		&\frac{\eta}{2} \norm{\bs{r}_{\bs{\beta},n}(\cdot,t)}_{L^2(\R^2)^3}^2 \leq C \norm{\bs{r}^{(0)}_{\bs{\beta},n}}_{L^2(\R^2)^3}^2 + \int_0^t \int_{\R^2} \left(\bs{f}_n \cdot \bs{r}_{\bs{\beta},n} + \frac{1}{2} \partial_t S(\bs{R}) \bs{r}_{\bs{\beta},n} \cdot \bs{r}_{\bs{\beta},n}\, \mathrm{d}\bs{x}\right)\,\mathrm{d}s . 
		%\label{E:est_approx_arg}
	\end{align*}
	The Cauchy-Schwarz inequality then implies the statement. 
\end{proof}

We now have to estimate each part of the right-hand side in \eqref{E:est_deriv_error}. Let
$$\widetilde{z}(t):=\sum_{\stackrel{\bs{\gamma}\in \N_0^3}{|\bs{\gamma}|\leq 3, \gamma_1=0}}\|\pa^{\bs{\gamma}}\bs{R}(\cdot,t)\|^2_{L^2(\R^2)^3}.$$
From Step I we know that
\begin{equation*}
	\norm{\bs{r}^{(0)}_{\bs{\beta}}}_{L^2(\R^2)^3} \leq C \left(\rho + \varepsilon^{\frac{7}{2} -a}\right).
\end{equation*}
For $\int_0^t \int_{\R^2} \frac{1}{2} \partial_t S(\bs{R}) \bs{r}_{\bs{\beta}} \cdot \bs{r}_{\bs{\beta}}\,\mathrm{d}\bs{x}\,\mathrm{d}s$, recalling \eqref{E: def_S}, we first have
\begin{equation*}
	\varepsilon^{2a} \epsilon_3 \int_0^t \int_{\R^2} \frac{1}{2} \partial_t \theta(\bs{R}) \bs{r}_{\bs{\beta}} \cdot \bs{r}_{\bs{\beta}}\, \mathrm{d}\bs{x}\,\mathrm{d}s \leq C \varepsilon^{2a} \int_0^t \norm{\bs{r}_{\bs{\beta}}(\cdot,s)}_{L^2(\R^2)^3}^2\,\mathrm{d}s \leq C \varepsilon^{2a} \int_0^t \widetilde{z}(s)\,\mathrm{d}s
\end{equation*}
since $\bs{R},\partial_t \bs{R} \in L^\infty(\R^2 \times \Jr)^3$. Similarly, using also that 
\[\|\Uext\|_{ L^\infty(\R^2\times\Jr)^3}, \|\pa_t \Uext\|_{L^\infty(\R^2 \times \Jr)^3} \leq C\varepsilon,\] 
cf.\ \eqref {E:regularity_Uext}, we derive
\begin{align*}
	\int_0^t \int_{\R^2} \frac{1}{2} \partial_t \varphi(\bs{R}) \bs{r}_{\bs{\beta}} \cdot \bs{r}_{\bs{\beta}}\, \mathrm{d}\bs{x}\,\mathrm{d}s 
	&\leq C \left(\varepsilon^{2} + \varepsilon^{1+a}\right) \int_0^t \norm{\bs{r}_{\bs{\beta}}(\cdot,s)}_{L^2(\R^2)^3}^2\,\mathrm{d}s \\
	&\leq C \varepsilon^{2}\int_0^t \widetilde{z}(s)\,\mathrm{d}s.
\end{align*}
For the residual term, \eqref{E:Res-est} yields $\norm{\partial^{\bs{\beta}} \Res(\cdot,t)}_{L^2(\R^2)^3} \leq C \varepsilon^{7/2}$. The Cauchy-Schwarz inequality, $t \leq T_0 \varepsilon^{-2}$ and $\norm{\bs{r}_{\bs{\beta}}(\cdot,t)}_{L^2(\R^2)^3}^2 \leq z(t) \leq \rho\le 1$ for $t \in \Jr$ then give us
\begin{equation}
\begin{aligned}
	\int_0^t \int_{\R^2} \varepsilon^{-a} \partial^{\bs{\beta}} \Res \cdot \bs{r}_{\bs{\beta}}\, \mathrm{d}\bs{x}\,\mathrm{d}s 
	&\leq \int_0^t \varepsilon^{-a} \norm{\partial^\beta \Res(\cdot,s)}_{L^2(\R^2)^3} \norm{\bs{r}_{\bs{\beta}}(\cdot,s)}_{L^2(\R^2)^3}\,\mathrm{d}s\\
	& \leq C \varepsilon^{\frac{3}{2}-a}.
\end{aligned} 
\label{E:estimate_integral_residual}
\end{equation}

The remaining terms $\int_0^t \int_{\R^2}\left(\bs{w}_{\bs{\beta}} \cdot \bs{r}_{\bs{\beta}} + \bs{s}_{\bs{\beta}} \cdot \bs{r}_{\bs{\beta}}\right) \, \mathrm{d}\bs{x}\,\mathrm{d}s$ mainly consist of integrals of the type
\begin{equation}
	I_1 := \int_0^t \int_{\R^2} \partial^{\bs{a}} f(\bs{x},s) \partial^{\bs{b}} g(\bs{x},s) \partial^{\bs{c}}h(\bs{x},s)k(\bs{x},s)\,\mathrm{d}\bs{x}\,\mathrm{d}s,
	\label{general_quartic_term}
\end{equation} 
where $f,g,h \in \mathcal{G}^3(\R^2\times J)$, $k \in \mathcal{G}^0(\R^2\times J)$ and $\bs{a},\bs{b}, \bs{c} \in \N_0^3$ with $|\bs{a}|,|\bs{b}|,|\bs{c}| < 4$ and $s := |\bs{a}| + |\bs{b}| + |\bs{c}| \leq 4$. For $s = 4$ we only have integrals
where at least one time-derivative is present, i.e., $a_t = b_t = c_t = 0$ is not possible. The case where four derivatives fall an a component of $\Uext$ also appears and will be discussed separately. The following two classes of estimates are needed.
\begin{enumerate}[label=\roman*)]
	\item \underline{$|\bs{a}| \leq 3$, $|\bs{b}| \leq 1$, $|\bs{c}| \leq 1$:} Here $\partial^{\bs{a}} f(\cdot,t), k(\cdot,t)\in L^2(\R^2)$ and $\partial^{\bs{b}} g(\cdot,t), \partial^{\bs{c}} h(\cdot,t) \in L^\infty(\R^2)$. With the Cauchy-Schwarz inequality we obtain
	\begin{equation*}
		\hspace{-0.6cm}I_1 \leq C \int_0^t \norm{\partial^{\bs{a}}f(\cdot,s)}_{L^2(\R^2)} \norm{\partial^{\bs{b}}g(\cdot,s)}_{L^\infty(\R^2)} \norm{\partial^{\bs{c}}h(\cdot,s)}_{L^\infty(\R^2)} \norm{k(\cdot,s)}_{L^2(\R^2)}\,\mathrm{d}s.
	\end{equation*}
	\item \underline{$|\bs{a}| \leq 2$, $|\bs{b}| \leq 2$, $|\bs{c}| = 0$:} Now $k(\cdot,t)\in L^2(\R^2)$, $\partial^{\bs{c}} h(\cdot,t)\in L^\infty(\R^2)$, and $\partial^{\bs{a}} f(\cdot,t)$,   $\partial^{\bs{b}} g(\cdot,t)\in L^p(\R^2)$ for all $p \in [1,\infty)$. This follows from the Sobolev embedding $H^1(\R_\pm^2) \hookrightarrow L^p(\R_\pm^2)$ for all $1 \leq p<\infty$. 
	H\"older's inequality then yields
	\begin{equation*}
	\hspace{-0.8cm}	I_1 \leq C \int_0^t \norm{\partial^{\bs{a}}f(\cdot,s)}_{L^3(\R^2)} \norm{\partial^{\bs{b}}g(\cdot,s)}_{L^6(\R^2)} \norm{\partial^{\bs{c}}h(\cdot,s)}_{L^\infty(\R^2)} \norm{k(\cdot,s)}_{L^2(\R^2)}\,\mathrm{d}s.
	\end{equation*}
\end{enumerate}
The role of the function $k$ in \eqref{general_quartic_term} will always be played by a component of $\bs{r}_{\bs{\beta}} = \partial^{\bs{\beta}} \bs{R}$.

Recall that $S(\bs{R})=\Lambda + \epsilon_3\eps^{2a}\theta(\bs{R})+\varphi(\bs{R})$. Hence, to estimate $\int_0^t \int_{\R^2}\bs{s}_{\bs{\beta}} \cdot \bs{r}_{\bs{\beta}}\, \mathrm{d}\bs{x}\,\mathrm{d}s$, we first analyze $\eps^{2a}\epsilon_3 \partial^{\bs{\gamma}} \theta(\bs{R}) \partial^{\bs{\beta} - \bs{\gamma}} \partial_t \bs{R} \cdot \bs{r}_{\bs{\beta}}$ where $\beta_1=\gamma_1=0$. 
This sum consists of terms of the form
\begin{equation*}
	C \varepsilon^{2a} \partial^{\bs{\gamma}'}R_i \partial^{\bs{\gamma}''}R_j \partial^{\bs{\beta} - \bs{\gamma}} \partial_t R_k \partial^{\bs{\beta}} R_l
\end{equation*}
with $\bs{\gamma} = \bs{\gamma}' + \bs{\gamma}''$, $\gamma'_1=\gamma''_1=0$, and $i,j,k,l \in \{1,2\}$. The case i) above
applies if $|\bs{\beta}-\bs{\gamma}|=0$, where we may take $|\bs{\gamma}''| \leq 1$. We then estimate
\begin{align*}
	C \varepsilon^{2a} \int_0^t \int_{\R^2} \partial^{\bs{\gamma}'}R_i \partial^{\bs{\gamma}''}R_j \partial^{\bs{\beta} - \bs{\gamma}} \partial_t R_k \partial^{\bs{\beta}} R_l \,\mathrm{d}\bs{x}\mathrm{d}s &\leq C \varepsilon^{2a} \int_0^t \norm{\partial^{\bs{\beta}}R_l(\cdot,s)}_{L^2(\R^2)}\norm{\partial^{\bs{\gamma}'}R_i(\cdot,s)}_{L^2(\R^2)}\,\mathrm{d}s \\
	&\leq C \varepsilon^{2a}\int_0^t \widetilde{z}(s)\,\mathrm{d}s.
\end{align*}% \label{estimate_factos_csi}
A representative of type ii) is any term with $|\bs{\beta}-\bs{\gamma}| = 1$, $|\bs{\gamma}'|= 2$ and $|\bs{\gamma}''|= 0$, which is estimated via 
\begin{align*}%\label{estimate_factors_hi}
	C \varepsilon^{2a} &\int_0^t \int_{\R^2} \partial^{\bs{\gamma}'}R_i \partial^{\bs{\gamma}''}R_j \partial^{\bs{\beta} - \bs{\gamma}} \partial_t R_k \partial^{\bs{\beta}} R_l \,\mathrm{d}\bs{x}\,\mathrm{d}s \\
	&\leq C \varepsilon^{2a} \int_0^t \norm{\partial^{\bs{\beta}}R_l(\cdot,s)}_{L^2(\R^2)}\norm{\partial^{\bs{\gamma}'}R_i(\cdot,s)}_{L^6(\R^2)} \norm{\partial^{\bs{\beta} - \bs{\gamma}} \partial_t R_k(\cdot,s)}_{L^3(\R^2)}\,\mathrm{d}s \\
	&\leq C \varepsilon^{2a} \int_0^t \! \Big(\norm{\partial^{\bs{\beta}}R_l(\cdot,s)}_{L^2(\R^2)}^2 + \norm{\partial^{\bs{\gamma}'}R_i(\cdot,s)}_{L^6(\R^2)}^2 \norm{\partial^{\bs{\beta} - \bs{\gamma}} \partial_t R_k(\cdot,s)}_{L^3(\R^2)}^2\Big)\mathrm{d}s \\
	&\leq C \varepsilon^{2a}\int_0^t \left(\widetilde{z}(s) + \left(z(s)\right)^2\right)\,\mathrm{d}s \leq C \varepsilon^{2a} \int_0^t \widetilde{z}(s)\,\mathrm{d}s + C \rho^2\varepsilon^{2a}t,
\end{align*}
using again $H^1(\R_\pm^2) \hookrightarrow L^p(\R_\pm^2)$ for $1 \leq p<\infty$. The remaining cases can be treated similarly.

Next, we study $\partial^{\bs{\gamma}} \varphi(\bs{R}) \partial^{\bs{\beta}-\bs{\gamma}} \partial_t \bs{R} \cdot \partial^{\bs{\beta}}\bs{R}$ with $|\bs{\beta}-\bs{\gamma}|\leq 2$ and $\beta_1=\gamma_1=0$. We again use \eqref {E:regularity_Uext}
which provides the inequality $\norm{\pa^{\bs{\alpha}} \bs{U}_{{\rm ext}}}_{L^\infty(\R^2\times \Jr)^3} \leq C \varepsilon$ for all $|\bs{\alpha}|\leq 3$ with 
$\alpha_1\le2$. For terms quadratic in $\bs{U}_{{\rm ext}}$ case~i) applies:
\newpage
\begin{align*}
	\int_0^t \int_{\R^2} \!\partial^{\bs{\gamma}} (U_{{\rm ext},i}U_{{\rm ext},j}) \partial^{\bs{\beta} - \bs{\gamma}} \partial_t R_k \partial^{\bs{\beta}}R_l\,\mathrm{d}\bs{x}\,\mathrm{d}s &\leq C \varepsilon^2 \!\int_0^t \!\norm{\partial^{\bs{\beta}}R_l(\cdot,s)}_{L^2(\R^2)}\norm{\partial^{\bs{\beta} - \bs{\gamma}} \partial_t R_k(\cdot,s)}_{L^2(\R^2)}\mathrm{d}s \\
	&\leq C \varepsilon^2 \int_0^t \widetilde{z}(s)\,\mathrm{d}s.
\end{align*}
For terms linear in $\bs{U}_{{\rm ext}}$, i.e., $I_2:=\varepsilon^{a} \int_0^t \int_{\R^2} \partial^{\bs{\gamma}'}U_{{\rm ext},i} \partial^{\bs{\gamma}''}R_j \partial^{\bs{\beta} - \bs{\gamma}} \partial_t R_k \partial^{\bs{\beta}} R_l \,\mathrm{d}\bs{x}\,\mathrm{d}s$, we distinguish the three cases $|\bs{\beta} - \bs{\gamma}| = 0,1,$ and $2$. 
For $|\bs{\beta} - \bs{\gamma}| = 0$ we compute
\begin{align*}
	I_2 &\leq C \varepsilon^{1 + a} \int_0^t \norm{\partial^{\bs{\beta}}R_l(\cdot,s)}_{L^2(\R^2)}\norm{\partial^{\bs{\gamma}''}R_j(\cdot,s)}_{L^2(\R^2)}\,\mathrm{d}s
	\leq C \varepsilon^{2a}\int_0^t \widetilde{z}(s)\,\mathrm{d}s
\end{align*}
by means of the estimate type i) and the fact that $\pa_tR_k \in L^\infty(\R^2\times \Jr)$. For $|\bs{\beta} - \bs{\gamma}| = 1$ the estimate of type ii) applies and we have 
\begin{align*}
	\begin{aligned}
		I_2&\leq C \varepsilon^{1+a} \int_0^t \norm{\partial^{\bs{\beta}}R_l(\cdot,s)}_{L^2(\R^2)}\norm{\partial^{\bs{\gamma}''}R_j(\cdot,s)}_{L^6(\R^2)} \norm{\partial^{\bs{\beta} - \bs{\gamma}} \partial_t R_k(\cdot,s)}_{L^3(\R^2)}\,\mathrm{d}s \\
		&\leq C \varepsilon^{1+a}\int_0^t \widetilde{z}(s)\,\mathrm{d}s + C \varepsilon^{1+a} t
	\end{aligned}
\end{align*}
as $\partial^{\bs{\gamma}''}R_j, \partial^{\bs{\beta} - \bs{\gamma}} \partial_t R_k \in L^\infty(\Jr,\mathcal{H}^1(\R^2))$. Finally, for $|\bs{\beta} - \bs{\gamma}| = 2$ case i) again yields
\begin{align*}
	I_2&\leq C \varepsilon^{1+a} \int_0^t \norm{\partial^{\bs{\beta}-\bs{\gamma}}\partial_t R_k(\cdot,s)}_{L^2(\R^2)}\norm{\partial^{\bs{\beta}}R_l(\cdot,s)}_{L^2(\R^2)} \,\mathrm{d}s 
	\leq C \varepsilon^{1+a}\int_0^t \widetilde{z}(s)\,\mathrm{d}s
\end{align*}
where we have used $\pa^{\bs{\gamma}''}R_j\in L^\infty(\R^2\times \Jr)$ because $|\bs{\gamma}''|\leq 1$.	

At last, we treat $\partial^{\bs{\beta}} \left(W(\bs{R})\bs{R}\right) \cdot \partial^{\bs{\beta}}\bs{R}$. 
Terms quadratic in $\bs{U}_{{\rm ext}}$ are estimated as follows, where $\bs{\beta} = \bs{\beta}' + \bs{\beta}''$.
If $|\bs{\beta}'|<3$ or if not all three derivatives fall on $\partial_t U_{{\rm ext},j}$, we obtain 
\begin{align*}
	I_3 &:=\int_0^t \!\!\int_{\R^2}\!\! \partial^{\bs{\beta}'} (U_{{\rm ext},i} \partial_t U_{{\rm ext},j})\partial^{\bs{\beta}''} R_k \partial^{\bs{\beta}}R_l\,\mathrm{d}\bs{x}\,\mathrm{d}s\\
 &\leq C \varepsilon^2 \!\!\int_0^t \norm{\partial^{\bs{\beta}}R_l(\cdot,s)}_{L^2(\R^2)}\norm{\partial^{\bs{\beta}''} R_k(\cdot,s)}_{L^2(\R^2)} \mathrm{d}s\\
	&\leq C \varepsilon^2 \!\!\int_0^t \widetilde{z}(s)\,\mathrm{d}s
\end{align*}
as $\|\pa^{\bs{\beta}'} \left(U_{{\rm ext},i} \partial_t U_{{\rm ext},j}\right)\|_{L^\infty(\R^2 \times \Jr)}\leq C\eps^2$ by \eqref {E:regularity_Uext}. If $|\bs{\beta}'|=3$ and $\partial^{\bs{\beta}'}$ is only applied to $\partial_t U_{{\rm ext},j}$, we use \eqref {E:regularity_Uext3} with  $\partial^{\bs{\beta}'}\partial_t U_{{\rm ext},j}=\mathcal{A}_j+\mathcal{B}_j$.
Sobolev's embedding for $x_2\mapsto R_k(x_1,x_2,s)$ implies that
\begin{align}\label{eq:I3}
	I_3&\le \left|\int_0^t \!\!\int_{\R^2} U_{{\rm ext},i} \mathcal{A}_j\partial^{\bs{\beta}''} R_k \partial^{\bs{\beta}}R_l\,\mathrm{d}\bs{x}\,\mathrm{d}s\right| + \left|\int_0^t \!\!\int_{\R^2} U_{{\rm ext},i} \mathcal{B}_j \partial^{\bs{\beta}''} R_k \partial^{\bs{\beta}}R_l\,\mathrm{d}\bs{x}\,\mathrm{d}s\right|\\
	&\le C \varepsilon^2 \!\!\int_0^t \widetilde{z}(s)\,\mathrm{d}s + \int_0^t\|\partial^{\bs{\beta}}R_l(\cdot,s)\|_{L^2(\R^2)}
	\left[\int_{\R^2} |U_{{\rm ext},i} \mathcal{B}_j R_k|^2 \,\mathrm{d}\bs{x} \right]^\frac12 \,\mathrm{d} s\notag\\
	&\leq C \varepsilon^2 \!\!\int_0^t \widetilde{z}(s)\,\mathrm{d}s + C\eps^2\int_0^t\|\partial^{\bs{\beta}}R_l(\cdot,s)\|_{L^2(\R^2)}
	\left[\int_{\R} \sup_{x_2\in\R} |R_k(x_1,x_2,s)|^2\,\mathrm{d} x_1\right]^\frac12 \,\mathrm{d} s\notag\\
	&\le C \varepsilon^2 \!\int_0^t \widetilde{z}(s)\,\mathrm{d}s + C\eps^2\!\int_0^t\!\|\partial^{\bs{\beta}}R_l(\cdot,s)\|_{L^2(\R^2)} 
	\left[\int_{\R^2}\! \big(|R_k(x_1,x_2,s)|^2 + |\partial_{x_2}R_k(x_1,x_2,s)|^2 \big)\,\mathrm{d}\bs{x}\right]^\frac12 \!\mathrm{d} s\notag\\
	&\le C\eps^2\int_0^t \widetilde{z}(s)\,\mathrm{d}s \notag.
\end{align}
In the same way, we treat terms linear in $\bs{U}_{{\rm ext}}$. Let $\bs{\beta} = \bs{\beta}' + \bs{\beta}'' + \bs{\beta}'''$ and w.l.o.g.\ $|\bs{\beta}'''|\leq 1$.
If $|\bs{\beta'}|\le 2$, it follows
\begin{align*}
	I_4&:=\varepsilon^{a}\int_0^t \int_{\R^2} \partial^{\bs{\beta}'}\pa_t U_{{\rm ext},i} \partial^{\bs{\beta}''}R_j \partial^{\bs{\beta}'''}R_k \partial^{\bs{\beta}}R_l \,\mathrm{d}\bs{x}\,\mathrm{d}s \\
	&\leq C \varepsilon^{1 + a} \int_0^t \norm{\partial^{\bs{\beta}}R_l(\cdot,s)}_{L^2(\R^2)}\norm{\partial^{\bs{\beta}''}R_j(\cdot,s)}_{L^2(\R^2)}\,\mathrm{d}s\\
	&\leq C \varepsilon^{1 + a}\int_0^t \widetilde{z}(s)\,\mathrm{d}s,
\end{align*}
using $\pa^{\bs{\beta}'''}R_k \in L^\infty(\R^2\times \Jr)$ and \eqref {E:regularity_Uext}. If $|\bs{\beta'}|=3$, as above we estimate
\begin{align*}
	I_4 &\leq \eps^a \left|\int_0^t \!\!\int_{\R^2} \mathcal{A}_i R_j R_k \partial^{\bs{\beta}}R_l\,\mathrm{d}\bs{x}\,\mathrm{d}s\right| + \eps^a \left|\int_0^t \!\!\int_{\R^2} \mathcal{B}_i R_j R_k \partial^{\bs{\beta}}R_l\,\mathrm{d}\bs{x}\,\mathrm{d}s\right|\\
	&\le C \varepsilon^{1 + a}\int_0^t \widetilde{z}(s)\,\mathrm{d}s + C\varepsilon^{a}\int_0^t \|\partial^{\bs{\beta}}R_l(\cdot,s)\|_{L^2(\R^2)}
	\left(\int_{\R^2} | \mathcal{B}_i R_j R_k|^2 \,\mathrm{d}\bs{x}\right)^\frac12\,\mathrm{d}s\\
	&\le C \varepsilon^{1 + a}\int_0^t \widetilde{z}(s)\,\mathrm{d}s + C\eps^{1+a}\int_0^t\|\partial^{\bs{\beta}}R_l(\cdot,s)\|_{L^2(\R^2)}
	\left(\int_{\R} \sup_{x_2\in\R} |R_k(x_1,x_2,s)|^2\,\mathrm{d} x_1\right)^\frac12\,\mathrm{d} s\\ 
	&\leq C \varepsilon^{1 + a}\int_0^t \widetilde{z}(s)\,\mathrm{d}s.
\end{align*}
Note that these are the only cases where four derivatives can fall on one function in this step.

Collecting the above partial estimates, we finally get in \eqref{E:est_deriv_error} 
\begin{equation*}
	\widetilde{z}(t) \leq C \left(\rho_0^2 + \varepsilon^2 \int_0^t \widetilde{z}(s) \,\mathrm{d}s + \varepsilon^{1+a} t+ \varepsilon^{\frac{3}{2} -a}+\varepsilon^{7-2a}\right).
\end{equation*}
If $a\in (1,\tfrac{11}{2})$, the Gronwall's inequality yields
\begin{align*}
	\widetilde{z}(t) &\leq C \left(\rho_0^2 + \varepsilon^{\frac{3}{2} -a} + \varepsilon^{1+a}t \right)\e^{C \varepsilon^2 t}
	\leq C \left(\rho_0^2 + \varepsilon^{\frac{3}{2} -a} + \varepsilon^{a-1}\right)
	%\label{estimate_beta_1_0}
\end{align*}
for all $t \in \Jr$ if $\|\bs{R}^{(0)}\|_{\mathcal{H}^3(\R^2)^3}\leq \rho_0$.

\subsubsection*{Step III: Analysis of $\bs{\partial^{\beta} R_{2,3}}$ for $\bs{|\beta|\leq 3, \beta_1\neq 0}$}

We first consider $\beta_1=1$. Setting $\bs{\alpha}:=(0,\beta_2,\beta_t)^\top$, we have $\bs{\beta} = (1,0,0)^\top+\bs{\alpha}$ and \eqref{E:rbeta-syst} implies 
\begin{equation}
	\left\{
	\begin{aligned}
		\partial^{\bs{\beta}} R_2 &= \partial_{x_2}\partial^{\bs{\alpha}} R_1 + \left(-S(\bs{R}) \partial_t \partial^{\bs{\alpha}}\bs{R} + \bs{s}_{\bs{\alpha}}(\bs{R}) + \bs{w}_{\bs{\alpha}}(\bs{R}) - \varepsilon^{-a} \partial^{\bs{\alpha}} \Res \right)_3,\\
		\partial^{\bs{\beta}} R_3 &= \left(-S(\bs{R}) \partial_t \partial^{\bs{\alpha}}\bs{R} + \bs{s}_{\bs{\alpha}}(\bs{R}) + \bs{w}_{\bs{\alpha}}(\bs{R}) - \varepsilon^{-a} \partial^{\bs{\alpha}} \Res \right)_2.
	\end{aligned}
	\right.
	\label{equations_second_third_component}
\end{equation}
Each term on the right-hand side has derivatives $\pa^{\bs{\gamma}}$ with $|\bs{\gamma}|\leq 3$ and $\gamma_1=0$ and hence can be bounded by Step II, for instance, 
%The products are estimated using Lemma \ref{L:products}, namely 
$$
\begin{aligned}
	\|(S(\bs{R}) \partial_t \partial^{\bs{\alpha}}\bs{R})(\cdot,t)\|_{L^2(\R^2)^3}
&\leq \|S(\bs{R})(\cdot,t)\|_{L^\infty(\R^2)}\norm{(\partial_t \partial^{\bs{\alpha}}\bs{R})(\cdot,t)}_{L^2(\R^2)^3}\\
&\leq C \!\norm{\partial^{\bs{\gamma}}\bs{R}(\cdot,t)}_{L^2(\R^2)^3}.
\end{aligned}
$$
In summary, we get
$$\norm{\partial^{\bs{\beta}}R_{2,3}(\cdot,t)}_{L^2(\R^2)^3}^2\le C\widetilde{z}(t) \leq C \left(\rho_0^2 + \varepsilon^{\frac{3}{2} -a} + \varepsilon^{a-1}\right)$$
for all $|\bs{\beta}|\leq 3,$ $\beta_1=1$ and all $t \in \Jr$ if $a\in (1,\tfrac{11}{2})$ and $\|\bs{R}^{(0)}\|_{\mathcal{H}^3(\R^2)^3}\leq \rho_0$.

For larger values of $\beta_1$ we iterate the process. For $\beta_1=2$ we have \eqref{equations_second_third_component} with $\bs{\beta} = (1,0,0)^\top + \bs{\alpha}$ and $\bs{\alpha}:=(1,\beta_2,\beta_t)^\top$ and using the previous step, all terms in the right-hand side can be estimated in $L^2(\R^2)$. For $\beta_1=3$ the same process applies, with $\bs{\beta} = (1,0,0)^\top +\bs{\alpha}$ and $\bs{\alpha}=(2,0,0)^\top$. 
Altogether, we arrive at
\begin{equation}
\begin{aligned}
	&\sum_{\substack{|\bs{\beta}| \leq 3, \\ \beta_1=0}} \norm{\partial^{\bs{\beta}} \bs{R}(\cdot,t)}_{L^2(\R^2)^3}^2 + \sum_{|\bs{\beta}| \leq 3} \left(\norm{\partial^{\bs{\beta}} R_2(\cdot,t)}_{L^2(\R^2)}^2 + \norm{\partial^{\bs{\beta}} R_3(\cdot,t)}_{L^2(\R^2)}^2\right) \\
	& \qquad \leq C \left( \rho_0^2 + \varepsilon^{\frac{3}{2} -a} + \varepsilon^{a-1}\right)
	\label{estimate_all_easy_norms}
	\end{aligned}
\end{equation}
for all $t \in \Jr$ if $a\in (1,\tfrac{11}{2})$ and $\|\bs{R}^{(0)}\|_{\mathcal{H}^3(\R^2)^3}\leq \rho_0$.

\subsubsection*{Step IV: Analysis of $\bs{\partial^{\beta} R_1, |\beta|\leq 3, \beta_1\neq 0}$}

In this final step we exploit the divergence equation $\nabla \cdot \bs{\mathcal{D}}(\bs{U}_E) = \nabla \cdot \bs{\mathcal{D}}(\bs{U}^{(0)}_E)$ to estimate $\partial^{\bs{\beta}} R_1$. First, from the definitions \eqref{E: def_S} and \eqref{E: def_W} it follows that
\begin{align*}
    \varepsilon^{-a} \partial_t \widetilde{\bs{\mathcal{D}}}(\varepsilon^a \bs{R}_E &+ \bs{U}_{{\rm ext},E}) = \widetilde{S}(\bs{R})\partial_t \widetilde{\bs{R}} + \widetilde{W}(\bs{R}) \widetilde{\bs{R}} + \varepsilon^{-a} \partial_t \widetilde{\bs{\mathcal{D}}}(\bs{U}_{{\rm ext},E})\\
    &= \partial_t\left( \left(\epsilon_1 + \varepsilon^{2a} \epsilon_3 |\widetilde{\bs{R}}|^2\right) \widetilde{\bs{R}}\right) + \widetilde{\varphi}(\bs{R})\partial_t \widetilde{\bs{R}} + \widetilde{W}(\bs{R}) \widetilde{\bs{R}} + \varepsilon^{-a} \partial_t \widetilde{\bs{\mathcal{D}}}(\bs{U}_{{\rm ext},E}).
\end{align*}
With definition \eqref{E: def_Res_Uext} it follows now that for $\bs{\alpha}\in \N_0^3, |\bs{\alpha}|\leq 2$ and $\bs{r}_{\bs{\alpha}} = \partial^{\bs{\alpha}}\bs{R}$ we have 
\begin{equation}\label{E:Dtil-diff}
	\begin{aligned}
		\varepsilon^{-a}\partial^{\bs{\alpha}}&\partial_t \widetilde{\bs{\mathcal{D}}}(\varepsilon^a \bs{R}_E + \bs{U}_{{\rm ext},E}) = \partial_t\left( \left(\epsilon_1 + \varepsilon^{2a} \epsilon_3 |\widetilde{\bs{R}}|^2\right) \widetilde{\bs{r}}_{\bs{\alpha}}\right) + \partial^{\bs{\alpha}}\left(\widetilde{\varphi}(\bs{R})\partial_t \widetilde{\bs{R}}\right) \\
		&\quad + \partial^{\bs{\alpha}} \left( \widetilde{W}(\bs{R}) \widetilde{\bs{R}}\right)
		\ + \partial_t \left(\sum_{0 < \bs{\gamma} \leq \bs{\alpha}} \binom{\bs{\alpha}}{\bs{\gamma}} \partial^{\bs{\gamma}} \left(\epsilon_1 + \varepsilon^{2a}\epsilon_3 |\widetilde{\bs{R}}|^2\right) \partial^{\bs{\alpha} - \bs{\gamma}} \widetilde{\bs{R}}\right)\\
		&\quad  + \varepsilon^{-a} \partial^{\bs{\alpha}} \widetilde{\Res} + \varepsilon^{-a} \partial^{\bs{\alpha}} \begin{pmatrix}
			\partial_{x_2}U_{{\rm ext},3} \\ -\partial_{x_1}U_{{\rm ext},3}
		\end{pmatrix}
	\end{aligned}
\end{equation}
on $\R^2_\pm\times\Jr$, where $\widetilde{\cdot}$ of a $(3\times 3)$-matrix denotes the restriction to the upper left $(2 \times 2)$-submatrix and $\widetilde{\cdot}$ of a vector in $\R^3$ denotes the first two components of this vector. The calculation to obtain \eqref{E:Dtil-diff} uses that $\varphi(\bs{R})$ and $W(\bs{R})$ have a block structure.

An integration by parts yields
\begin{align*}
	&\int_0^t \partial^{\bs{\alpha}}\left(\widetilde{\varphi}(\bs{R})\partial_t \widetilde{\bs{R}}\right)\,\mathrm{d}s = \int_0^t \bigg(\widetilde{\varphi}(\bs{R}) \partial^{\bs{\alpha}} \partial_t \widetilde{\bs{R}} + \sum_{0 < \bs{\gamma} \leq \bs{\alpha}} \binom{\bs{\alpha}}{\bs{\gamma}} \partial^{\bs{\gamma}} \widetilde{\varphi}(\bs{R}) \partial^{\bs{\alpha} - \bs{\gamma}}\partial_t \widetilde{\bs{R}} \bigg)\,\mathrm{d}s \\
	&\quad =\int_0^t \bigg({-}\partial_t \widetilde{\varphi}(\bs{R}) \partial^{\bs{\alpha}} \widetilde{\bs{R}} + \sum_{0 < \bs{\gamma} \leq \bs{\alpha}} \binom{\bs{\alpha}}{\bs{\gamma}} \partial^{\bs{\gamma}} \widetilde{\varphi}(\bs{R}) \partial^{\bs{\alpha} - \bs{\gamma}}\partial_t \widetilde{\bs{R}} \bigg)\,\mathrm{d}s + \left[\widetilde{\varphi}(\bs{R}) \partial^{\bs{\alpha}}\widetilde{\bs{R}}\right]_0^t.
\end{align*} 
Integrating \eqref{E:Dtil-diff} in time, we then deduce
\begin{equation}\label{E:Dtil-diff-int}
	\begin{aligned}
		\big[\varepsilon^{-a}&\partial^{\bs{\alpha}} \widetilde{\bs{\mathcal{D}}}(\varepsilon^a \bs{R}_E + \bs{U}_{{\rm ext},E})\big]_0^t\\
		&= \bigg[\left(\epsilon_1 + \varepsilon^{2a} \epsilon_3 |\widetilde{\bs{R}}|^2\right) \widetilde{\bs{r}}_{\bs{\alpha}}+ \widetilde{\varphi}(\bs{R})\widetilde{\bs{r}}_{\bs{\alpha}} + \sum_{0 < \bs{\gamma} \leq \bs{\alpha}} \binom{\bs{\alpha}}{\bs{\gamma}} \partial^{\bs{\gamma}} \left(\epsilon_1 + \varepsilon^{2a}\epsilon_3 |\widetilde{\bs{R}}|^2\right) \partial^{\bs{\alpha} - \bs{\gamma}} \widetilde{\bs{R}}\bigg]_0^t\\
		&\quad \ +\int_0^t \bigg(- \partial_t \widetilde{\varphi}(\bs{R})\widetilde{\bs{r}}_{\bs{\alpha}} + \sum_{0 < \bs{\gamma} \leq \bs{\alpha}} \binom{\bs{\alpha}}{\bs{\gamma}} \partial^{\bs{\gamma}} \widetilde{\varphi}(\bs{R}) \partial^{\bs{\alpha} - \bs{\gamma}}\partial_t \widetilde{\bs{R}} + \partial^{\bs{\alpha}} \bigg( \widetilde{W}(\bs{R}) \widetilde{\bs{R}}\bigg) \bigg)\,\mathrm{d}s\\
		&\quad \ + \varepsilon^{-a} \int_0^t \left(\partial^{\bs{\alpha}} \widetilde{\Res} + \partial^{\bs{\alpha}} \begin{pmatrix}
			\partial_{x_2}U_{{\rm ext},3} \\ -\partial_{x_1}U_{{\rm ext},3}
		\end{pmatrix}\right)\,\mathrm{d}s
		.
	\end{aligned}
\end{equation}
Note that the divergence of the last term vanishes.

\smallskip

\noindent \textbf{Substep 1}: $\beta_1=1$.
We write $\bs{\beta} = (1,0,0)^\top + \bs{\alpha}$, where $\bs{\alpha}=(0,\beta_2,\beta_t)^\top$. We have that $\nabla \cdot \pa^{\bs{\alpha}}\bs{\widetilde{\mathcal{D}}}(\bs{U}_E) $ is constant in time because
$$\nabla \cdot \pa^{\bs{\alpha}}\bs{\widetilde{\mathcal{D}}}(\bs{U}_E) = \pa^{\bs{\alpha}}(\nabla \cdot \bs{\widetilde{\mathcal{D}}}(\bs{U}_E))= \pa^{\bs{\alpha}}\widetilde{\varrho}_0, \quad \widetilde{\varrho}_0:=\nabla \cdot \bs{\widetilde{\mathcal{D}}}(\bs{U}^{(0)}_E).$$
Note that  $\widetilde{\varrho}_0\in \cH^2(\R^2)$ because of the algebra property of $\cH^2(\R^2)$ and $\bs{U}^{(0)}\in \cH^3(\R^2)^3$. Hence, taking the divergence of \eqref{E:Dtil-diff-int}, the first term vanishes and we have
\begin{align*}
	&\left[ \left(\epsilon_1 + \varepsilon^{2a} \epsilon_3 |\widetilde{\bs{R}}|^2\right) (\partial_{x_1} r_{\bs{\alpha},1} + \partial_{x_2} r_{\bs{\alpha},2}) + \nabla \left(\epsilon_1 + \varepsilon^{2a} \epsilon_3 |\widetilde{\bs{R}}|^2\right) \cdot \widetilde{\bs{r}}_{\bs{\alpha}}\right]_0^t \\
	&= {-}\left[\nabla \cdot \left(\widetilde{\varphi}(\bs{R}) \partial^{\bs{\alpha}}\widetilde{\bs{R}} + \sum_{0 < \bs{\gamma} \leq \bs{\alpha}} \binom{\bs{\alpha}}{\bs{\gamma}} \partial^{\bs{\gamma}} (\epsilon_1 + \varepsilon^{2a}\epsilon_3 |\widetilde{\bs{R}}|^2) \partial^{\bs{\alpha} - \bs{\gamma}} \widetilde{\bs{R}} \right)\right]_0^t \\
	&\quad \ {-}\int_0^t \nabla \cdot \left( - \partial_t \widetilde{\varphi}(\bs{R}) \partial^{\bs{\alpha}} \widetilde{\bs{R}} + \partial^{\bs{\alpha}} \left( \widetilde{W}(\bs{R}) \widetilde{\bs{R}}\right) + \varepsilon^{-a} \partial^{\bs{\alpha}} \widetilde{\Res}\right)\,\mathrm{d}s \\
	&\quad \ {-} \int_0^t \nabla \cdot \left( \sum_{0 < \bs{\gamma} \leq \bs{\alpha}} \binom{\bs{\alpha}}{\bs{\gamma}} \partial^{\bs{\gamma}} \widetilde{\varphi}(\bs{R}) \partial^{\bs{\alpha} - \bs{\gamma}}\partial_t \widetilde{\bs{R}}\right)\,\mathrm{d}s .
\end{align*}

Because of $\widetilde{\bs{R}}\in C(\Jr,L^\infty(\R^2))^2$, there exists a number $\vartheta >0$ with 
$\vartheta \leq (\epsilon_1 + \varepsilon^{2a} \epsilon_3 |\widetilde{\bs{R}}|^2)(\bs{x},t)$ for small enough $\varepsilon$, all $t\in \Jr$ and almost all $\bs{x}\in\R^2$. Since $\partial_{x_1}\widetilde{\bs{R}}, \partial_{x_2} \widetilde{\bs{R}}\in C(\Jr,L^\infty(\R^2))^2$,
we can also estimate $$\norm{\nabla \left(\epsilon_1 + \varepsilon^{2a} \epsilon_3 |\widetilde{\bs{R}}|^2\right)(\cdot,t)}_{L^\infty(\R^2)^3} \le C, \quad \forall\, t \in \Jr.$$

These facts yield the central inequality of this step:
\begin{align}\label{E:DxR1-est}
	&\vartheta \norm{\partial_{x_1}r_{\bs{\alpha},1}(\cdot,t)}_{L^2(\R^2)} \\
	&\leq C \big(\!\norm{\partial_{x_2} r_{\bs{\alpha},2}(\cdot,t)}_{L^2(\R^2)}\!+\! \norm{\nabla \cdot \widetilde{\bs{r}}_{\bs{\alpha}}(\cdot,0)}_{L^2(\R^2)}\!+\! \norm{\widetilde{\bs{r}}_{\bs{\alpha}}(\cdot,t)}_{L^2(\R^2)^2}\!+\! \norm{\widetilde{\bs{r}}_{\bs{\alpha}}(\cdot,0)}_{L^2(\R^2)^2}\!\big) \notag\\
	&\quad \ + \norm{\left[\nabla \cdot \left(\widetilde{\varphi}(\bs{R}) \partial^{\bs{\alpha}}\widetilde{\bs{R}} + \sum_{0 < \bs{\gamma} \leq \bs{\alpha}} \binom{\bs{\alpha}}{\bs{\gamma}} \partial^{\bs{\gamma}} (\epsilon_1 + \varepsilon^{2a}\epsilon_3 |\widetilde{\bs{R}}|^2) \partial^{\bs{\alpha} - \bs{\gamma}} \widetilde{\bs{R}}\right)(\cdot,s)\right]_0^t }_{L^2(\R^2)} \notag\\
	&\quad \ + \norm{\int_0^t\nabla \cdot \left(- \partial_t \widetilde{\varphi}(\bs{R}) \partial^{\bs{\alpha}} \widetilde{\bs{R}} + \partial^{\bs{\alpha}} \left( \widetilde{W}(\bs{R}) \widetilde{\bs{R}}\right) + \varepsilon^{-a} \partial^{\bs{\alpha}} \widetilde{\Res}\right)(\cdot,s)\,\mathrm{d}s}_{L^2(\R^2)} \notag\\
	&\quad \ +\norm{\int_0^t\nabla \cdot \left( \sum_{0 < \bs{\gamma} \leq \bs{\alpha}} \binom{\bs{\alpha}}{\bs{\gamma}} \partial^{\bs{\gamma}} \widetilde{\varphi}(\bs{R}) \partial^{\bs{\alpha} - \bs{\gamma}}\partial_t \widetilde{\bs{R}}\right)(\cdot,s)\,\mathrm{d}s}_{L^2(\R^2)}\notag.
\end{align}
We next iterate over $\beta_t$ and $\beta_2$.

(i) $\bs{\alpha} = (0,0,0)^\top$. Here \eqref{E:DxR1-est} simplifies to
\begin{equation}
	\begin{aligned}
		&\vartheta \norm{\partial_{x_1}r_{\bs{\alpha},1}(\cdot,t)}_{L^2(\R^2)} \leq \varepsilon^{-a}\norm{\int_0^t \nabla \cdot \widetilde{\Res}(\cdot,s)\,\mathrm{d}s}_{L^2(\R^2)} \\
		&+ C \big(\!\norm{\partial_{x_2} r_{\bs{\alpha},2}(\cdot,t)}_{L^2(\R^2)} \!+\! \norm{\nabla \cdot \widetilde{\bs{r}}_{\bs{\alpha}}(\cdot,0)}_{L^2(\R^2)} \!+\! \norm{\widetilde{\bs{r}}_{\bs{\alpha}}(\cdot,t)}_{L^2(\R^2)^2} \!+\! \norm{\widetilde{\bs{r}}_{\bs{\alpha}}(\cdot,0)}_{L^2(\R^2)^2}\!\big) \\
		&+ \norm{\left[\nabla \cdot \left(\widetilde{\varphi}(\bs{R}) \widetilde{\bs{R}} \right)(\cdot,s)\right]_0^t }_{L^2(\R^2)} + \norm{\int_0^t \nabla \cdot \left(- \partial_t \widetilde{\varphi}(\bs{R}) \widetilde{\bs{R}} + \widetilde{W}(\bs{R}) \widetilde{\bs{R}}\right)(\cdot,s)\,\mathrm{d}s}_{L^2(\R^2)}.
	\end{aligned}
	\label{E:DxR-al-zero}
\end{equation}
The residual term on the right-hand side is bounded by $C\eps^{\frac{3}{2}-a}$ due to \eqref{E:Res-est}. The second and fourth term on the right-hand side of \eqref{E:DxR-al-zero} are estimated by \eqref{estimate_all_easy_norms} and the third and fifth term by \eqref{E:est-IC2}. In the first norm on the last line of \eqref{E:DxR-al-zero} all terms have been treated in Steps I, II or III except for those of the type
$\varepsilon^a \epsilon_3\partial_{x_1}r_{\bs{\alpha},1} R_j U_{{\rm ext},k}$ and $\epsilon_3\partial_{x_1}r_{\bs{\alpha},1} U_{{\rm ext},j} U_{{\rm ext},k}$. Using $\bs{R} \in L^\infty(\R^2\times \Jr)^3$ and $\norm{\bs{U}_{{\rm ext}}}_{L^\infty(\R^2\times \Jr)^3} \leq C \varepsilon$, we have
\begin{align*}
	\norm{\varepsilon^a \epsilon_3\left(\partial_{x_1}r_{\bs{\alpha},1} R_j U_{{\rm ext},k}\right)(\cdot,t)}_{L^2(\R^2)} &\leq C \varepsilon^{1+a}\norm{\partial_{x_1}r_{\bs{\alpha},1}(\cdot,t)}_{L^2(\R^2)}, \\
	\norm{\epsilon_3\left(\partial_{x_1}r_{\bs{\alpha},1} U_{{\rm ext},j} U_{{\rm ext},k}\right)(\cdot,t)}_{L^2(\R^2)} &\leq C \varepsilon^{2}\norm{\partial_{x_1}r_{\bs{\alpha},1}(\cdot,t)}_{L^2(\R^2)}.
\end{align*}
In the last norm of the right-hand side of \eqref{E:DxR-al-zero}, the terms which have not been estimated so far are of the type $\partial_{x_1}r_{\bs{\alpha},1} \pa_t U_{{\rm ext},j} U_{{\rm ext},k}$, $\varepsilon^a \partial_t(U_{{\rm ext},j} R_k) \partial_{x_1} r_{\bs{\alpha},1}$, and $\varepsilon^a U_{{\rm ext},j} R_k \partial_t \partial_{x_1} r_{\bs{\alpha},1}$ for $j,k\in \{1,2,3\}$. Using $\bs{R},\pa_t\bs{R} \in L^\infty(\R^2\times \Jr)^3$ and $\norm{\pa_t\bs{U}_{{\rm ext}}}_{L^\infty(\R^2\times \Jr)^3} \leq C \varepsilon$, we obtain
\begin{align*}
	\int_0^t \norm{\epsilon_3\left(\partial_{x_1}r_{\bs{\alpha},1} \pa_t U_{{\rm ext},j} U_{{\rm ext},k}\right)(\cdot,s)}_{L^2(\R^2)} \,\mathrm{d}s &\leq C \varepsilon^{2} \int_0^t \norm{\partial_{x_1}r_{\bs{\alpha},1}(\cdot,s)}_{L^2(\R^2)}\,\mathrm{d}s,\\
	\int_0^t \norm{\varepsilon^a \epsilon_3\left(\partial_{x_1}r_{\bs{\alpha},1} \pa_t (R_k U_{{\rm ext},j})\right)(\cdot,s)}_{L^2(\R^2)}\,\mathrm{d}s &\leq C \varepsilon^{1+a} \int_0^t \norm{\partial_{x_1}r_{\bs{\alpha},1}(\cdot,s)}_{L^2(\R^2)}\,\mathrm{d}s,
\end{align*}
and, integrating by parts in time, 
\begin{align*}
	&\Big\|\int_0^t \varepsilon^a\epsilon_3\left(U_{{\rm ext},j} R_k \partial_t \partial_{x_1} r_{\bs{\alpha},1} \right)(\cdot,s)\,\mathrm{d}s\Big\|_{L^2(\R^2)} \\
	&\leq \norm{\left[ \varepsilon^a\left(U_{{\rm ext},j} R_k \partial_{x_1} r_{\bs{\alpha},1} \right)(\cdot,s)\right]_0^t
		- \int_0^t \varepsilon^a\left(\partial_t(U_{{\rm ext},j} R_k) \partial_{x_1}r_{\bs{\alpha},1} \right)(\cdot,s)\,\mathrm{d}s}_{L^2(\R^2)}\\
	&\leq C\varepsilon^{1+a}\left(\norm{\partial_{x_1}r_{\bs{\alpha},1}(\cdot,t)}_{L^2(\R^2)} + \norm{\partial_{x_1}r_{\bs{\alpha},1}(\cdot,0)}_{L^2(\R^2)} + \int_0^t \norm{\partial_{x_1}r_{\bs{\alpha},1}(\cdot,s)}_{L^2(\R^2)}\,\mathrm{d}s \right) \\
	&\leq C \varepsilon^{1+a} \left( \rho_0 + \norm{\partial_{x_1}r_{\bs{\alpha},1}(\cdot,t)}_{L^2(\R^2)} + \int_0^t \norm{\partial_{x_1}r_{\bs{\alpha},1}(\cdot,s)}_{L^2(\R^2)}\,\mathrm{d}s \right).
\end{align*}

Combining the above inequalities, for $a \in (1,\tfrac{11}{2})$ and $0\leq t\leq \Trho \leq T_0 \varepsilon^{-2}$ we infer 
\begin{align*}
	\vartheta &\norm{\partial_{x_1}r_{\bs{\alpha},1}(\cdot,t)}_{L^2(\R^2)} \leq C \left(\rho_0 + \varepsilon^{\frac{1}{2}(\frac{3}{2}- a)} + \varepsilon^{\frac{1}{2}(a-1)} + \varepsilon^{2} \norm{\partial_{x_1} r_{\bs{\alpha},1}(\cdot,t)}_{L^2(\R^2)} + \varepsilon^{\frac{3}{2} -a}\right)\\
	&\quad \ + C \varepsilon^2\! \int_0^t \left(\rho_0 + \varepsilon^{\frac{1}{2}(\frac{3}{2}- a)}+ \varepsilon^{\frac{1}{2}(a-1)}\right)\,\mathrm{d}s + C\varepsilon^2 \!\int_0^t \norm{\partial_{x_1} r_{\bs{\alpha},1}(\cdot,s)}_{L^2(\R^2)}\,\mathrm{d}s.
\end{align*}
For $\varepsilon$ small enough and $a \in [\tfrac{5}{4},\tfrac{11}{2})$ (so that $\tfrac{3}{2}- a\leq a-1$) it follows
\begin{align*}
	\norm{\partial_{x_1}r_{\bs{\alpha},1}(\cdot,t)}_{L^2(\R^2)} &\leq C \left(\rho_0 + \varepsilon^{\frac{1}{2}(\frac{3}{2}- a)} + \varepsilon^2 \int_0^t \norm{\partial_{x_1} r_{\bs{\alpha},1}(\cdot,s)}_{L^2(\R^2)}\,\mathrm{d}s\right).
\end{align*}		
Finally, Gronwall's inequality yields
\begin{align*}
	\norm{\partial_{x_1}r_{\bs{\alpha},1}(\cdot,t)}_{L^2(\R^2)} 	& \leq C \left( \rho_0 + \varepsilon^{\frac{1}{2}(\frac{3}{2}- a)} \right) \e^{C\varepsilon^2 t}\leq C \left(\rho_0 + \varepsilon^{\frac{1}{2}(\frac{3}{2}- a)} \right).
\end{align*}

(ii) We iterate the process from (i) for higher $\alpha_2=\beta_2$ and $\alpha_t = \beta_t$ (keeping $\beta_1=1$). For instance, the following sequence of $\bs{\alpha}$'s can be chosen: $\bs{\alpha}=(0,1,0)^\top$, $(0,0,1)^\top$, $(0,2,0)^\top$, $(0,0,2)^\top$, $(0,1,1)^\top$.
Note that $|\bs{\alpha}| = \beta_t + \beta_2 \leq 2$ therefore we can always use integration by parts and Lemma \ref{L:products}. 
In the terms with $\widetilde{W}$ again three derivatives can fall on $\partial_t U_{\mathrm{ext},k}$. If $\partial_{x_1}$
is included, then one can proceed as above by means of \eqref{E:regularity_Uext}. Otherwise, one uses \eqref{E:regularity_Uext3} and argues as in \eqref{eq:I3}.

\smallskip

\noindent \textbf{Substep 2}: $\beta_1>1$.
In this last step we have to iterate over $\beta_1$ and increase it to $3$. For $\beta_1=2$ we set $\bs{\beta}=(1,0,0)^\top+\bs{\alpha}$ with $\bs{\alpha}=(1,\beta_2,\beta_t)^\top$. The estimates work like in Substep 1(i) since we have $|\bs{\alpha}| \leq 2$. 
Finally, for $\beta_1=3$ we have $\bs{\beta}=(3,0,0)^\top = (1,0,0)^\top + \bs{\alpha}$ with $\bs{\alpha} = (2,0,0)^\top$ and apply Substep 1(i) again. 
Here, factors $\partial_{x_1}^3\partial_t U_{\mathrm{ext},k}$ occur in the terms with $\widetilde{W}$, which are treated with \eqref{E:regularity_Uext2}.

We conclude that
\begin{equation}\label{estimate_R1_derivatives}
    \norm{\partial^{\bs{\beta}}R_{1}(\cdot,t)}_{L^2(\R^2)} \leq C \left(\rho_0 + \varepsilon^{\frac{1}{2}(\frac{3}{2}- a)} \right)
\end{equation}
for all $|\bs{\beta}|\leq 3, \beta_1\neq 0$.

In summary, combining \eqref{estimate_R1_derivatives} with \eqref{estimate_all_easy_norms}, one concludes 
\begin{equation*}
	z(t) \leq C \left(\rho_0^2 + \varepsilon^{\frac{3}{2}- a}\right)
\end{equation*}
for every $t \in\Jr$ and $\eps\in (0,\eps_0)$ if $a \in [\tfrac{5}{4},\tfrac{11}{2})$ and $\eps_0$ is small enough.

Next, we keep $\rho$ fixed, choose $a \in [\tfrac{5}{4},\tfrac{3}{2})$ and $\rho_0,\eps_0$ so small that $ C \left(\rho_0^2 + \eps_0^{3/2- a}\right)<\tfrac{1}{2}\rho^2$ and 
\[%\label{E:exist-est-final}
\eps_0^a\rho + \|\Uext\|_{L^\infty(\R^2\times [0,T_0 \eps^{-2}))^3} \le \varpi % \min\left\{\frac{\eta-\epsilon_{1,m}^+}{3\min\{\epsilon_{3,m}^+,0\}},\frac{\eta-\epsilon_{1,m}^-}{3\min\{\epsilon_{3,m}^-,0\}}\right\},
\]
where we recall \eqref{def:varpi} and that $ \|\Uext\|_{L^\infty(\R^2\times [0,T_0 \eps^{-2}))^3} \leq C\eps \leq C\eps_0$. With this choice we have
\[%\label{E:z-est-final}
z(t)<\frac{1}{2}\rho^2 
\]
for every $t \in\Jr$ and $\eps\in (0,\eps_0)$ if $a \in [\tfrac{5}{4},\tfrac{3}{2})$. The definition \eqref{def:Trho} of $\Trho$
now implies that $\Trho=T_0\eps^{-2}<t_M$ and that \eqref{E:bootstrap-aim} holds with $t_*=T_0\eps^{-2}$.

%---------------------------------------------------------------------------------------
\subsection{Final error estimate}\label{sec:final}

To finalize the proof of Theorem~\ref{T:main}, we first compare $\bs{U}_{{\rm ext}}$ from \eqref{E:Uext} and $\Uans$ from \eqref{E:Uans}. Similar as deducing 
$\Res(\Uext) \in \mathcal{G}^3(\R^2\times \Jr)^3$ from \eqref{E:ass2-Amwk}, 
one can show that the same condition yields 
\begin{align*}
	&\norm{\bs{U}_{{\rm ext}} - \Uans}_{\mathcal{G}^3(\R^2\times \Jr)^3}\\
	&\leq \norm{\left(- \varepsilon^2 \ie \partial_{X_2} A \partial_k \bs{w}(k_0) - \varepsilon^3 \frac{1}{2} \partial_{X_2}^2 A \partial_k^2 \bs{w}(k_0) + \varepsilon^3 |A|^2 A \bs{p}\right) F_1 + \cc}_{\mathcal{G}^3(\R^2\times \Jr)^3}\\
	& \quad + \norm{\varepsilon^3 A^3 \bs{h} F_1^3 + \cc}_{\mathcal{G}^3(\R^2\times \Jr)^3}\\
	&\leq C \varepsilon^{\frac{3}{2}}.
\end{align*}
Note that due to the fact that $A$ depends on  $X_2 = \varepsilon (x_2 - \nu_1 t)$, we again lose one half of the power of $\eps$ due to the substitution in the $L^2$-integral.
Second, we use \eqref{E:U-Uext-est} and the triangle inequality to conclude
\begin{align*}
	\norm{\bs{U} - \Uans}_{\mathcal{G}^3(\R^2\times \Jr)^3} &\leq \norm{\bs{U} - \bs{U}_{{\rm ext}}}_{\mathcal{G}^3(\R^2\times \Jr)^3} + \norm{\bs{U}_{{\rm ext}} - \Uans}_{\mathcal{G}^3(\R^2\times \Jr)^3} \\
	&\leq C \left(\varepsilon^a +\varepsilon^{\frac{3}{2}}\right) \leq C_\delta \varepsilon^{\frac{3}{2} - \delta}
\end{align*} 
for all $\delta >0$. \hfill $\Box$

\appendix
\section{Numerical method for the eigenvalue problem}
\label{S:numerical_eigenvalue}
The method described in this section can be found in \cite{dohnal2022quasilinear}. To solve \eqref{E:ev-prob} numerically we rewrite the problem as a second-order ordinary differential equation 
\begin{align*}
	\partial_{x_1}^2 w_3 &= \ie \partial_{x_1}\epsilon_1(x_1) \omega w_2 + \ie \epsilon_1(x_1) \omega \partial_{x_1} w_2 \\
	&= \frac{\partial_{x_1} \epsilon_1(x_1)}{\epsilon_1(x_1)} \partial_{x_1} w_3 - \epsilon_1(x_1) \omega \left( \upmu_0 \omega w_3 + k w_1\right) \\
	&= \frac{\partial_{x_1} \epsilon_1(x_1)}{\epsilon_1(x_1)} \partial_{x_1} w_3 - \epsilon_1(x_1) \upmu_0 \omega^2 w_3 + k^2 w_3
\end{align*}
on $\R\setminus\{0\}$.
The interface condition $\left\llbracket w_2 \right\rrbracket_{\text{1D}} = 0$ implies that $\left\llbracket \frac{\partial_{x_1} w_3}{\epsilon_1 } \right\rrbracket_{\text{1D}} = 0$. Now we have to solve the eigenvalue problem
\begin{equation}
	\left\{
	\begin{aligned}
		-\partial_{x_1}^2 w_3 + \frac{\partial_{x_1} \epsilon_1\left(x_1\right)}{\epsilon_1\left(x_1\right)} \partial_{x_1} w_3 + k^2 w_3 &= \epsilon_1\left(x_1\right) \upmu_0 \omega^2 w_3, & x_1 &\in \R\setminus \{0\},\\
		\left\llbracket w_3 \right\rrbracket_{\text{1D}} = \left\llbracket \frac{\partial_{x_1} w_3}{\epsilon_1 } \right\rrbracket_{\text{1D}} &= 0.
	\end{aligned} \right.
	\label{eigenvalue_problem_numerics}
\end{equation}
Note that we can use $w_1 = -\frac{k}{\epsilon_1 \omega} w_3$ and $w_2 = - \frac{\ie}{\epsilon _1 \omega} \partial_{x_1} w_3$ to calculate the remaining components of $\bs{w}$. We also see that the interface conditions $\llbracket \epsilon_1 w_1 \rrbracket_{\text{1D}} = \llbracket w_2 \rrbracket_{\text{1D}} = 0$ are satisfied if $w_3$ solves \eqref{eigenvalue_problem_numerics}. 

To simplify the numerics we write $w_3 = w_{3,\mathrm{r}} + w_{3,\mathrm{s}}$, with a smooth function $w_{3,\mathrm{r}}$ and a function $w_{3,\mathrm{s}}$ that has a discontinuous first derivative at $x_1 = 0$. For instance, we take
\begin{equation*}
	w_{3,\mathrm{s}}\left(x_1\right) = \begin{cases}
		w_{3,\mathrm{s}}^- = \mathrm{const.,} & x_1 \leq 0,\\
		w_{3,\mathrm{s}}^+\left(x_1\right), & x_1 >0,
	\end{cases}
\end{equation*}
and choose $w_{3,\mathrm{s}}^+(0) = w_{3,\mathrm{s}}^-$ so that $w_{3,\mathrm{s}}$ is continuous. Note that with this choice $w_3$ satisfies the first interface condition.
For the second interface condition we calculate $\partial_{x_1} w_3$ and get that
\begin{align*}
	\left\llbracket \frac{\partial_{x_1} w_3}{\epsilon_1 \omega} \right\rrbracket_{\text{1D}} = 0 \Longleftrightarrow&\quad \epsilon_1^-(0)\left(\partial_{x_1} w_{3,\mathrm{r}}(0) + \partial_{x_1}w_{3,\mathrm{s}}^+(0)\right) = \epsilon_1^+(0) \partial_{x_1} w_{3,\mathrm{r}}(0)\\
	\Longleftrightarrow&\quad \partial_{x_1} w_{3,\mathrm{s}}^+(0) = \frac{\epsilon_1^+(0) - \epsilon_1^-(0)}{\epsilon_1^-(0)} \partial_{x_1} w_{3,\mathrm{r}}(0) =: \widetilde{\epsilon} \partial_{x_1}w_{3,\mathrm{r}}(0).
\end{align*}
We now set
\begin{equation*}
	w_{3,\mathrm{s}}(x_1) = \left(\mathcal{L}w_{3,\mathrm{r}}\right)(x_1) := \begin{cases}
		- \sgn{\widetilde{\epsilon}} \partial_{x_1} w_{3,\mathrm{r}}(0), & x_1 < 0,\\
		- \sgn{\widetilde{\epsilon}} \partial_{x_1} w_{3,\mathrm{r}}(0) \e^{-|\widetilde{\epsilon}| x_1} , & x_1 \geq 0,
	\end{cases}
\end{equation*}
to satisfy the second interface condition. Thus, $w_{3,\mathrm{r}}$ has to solve
\begin{equation}
	\left\{
	\begin{aligned}
		\left(-\partial_{x_1}^2 + \frac{\partial_{x_1} \epsilon_1\left(x_1\right)}{\epsilon_1\left(x_1\right)} \partial_{x_1} + k^2 \right) (I + \mathcal{L})w_{3,\mathrm{r}}(x_1)
		= \epsilon_1(x_1) \upmu_0 \omega^2 &(I + \mathcal{L})w_{3,\mathrm{r}}\left(x_1\right), & x_1 &\in \R\setminus \{0\},\\
		\left\llbracket w_{3,\mathrm{r}} \right\rrbracket_{\text{1D}} = \left\llbracket \partial_{x_1} w_{3,\mathrm{r}}\right\rrbracket_{\text{1D}} &= 0.
	\end{aligned} \right.
	\label{eigenvalue_problem_numerics_2}
\end{equation}
We are interested in $H^1(\R)$-solutions, therefore we have at least the boundary conditions 
\begin{equation*}
	\lim_{x_0 \rightarrow -\infty} w_{3,\mathrm{r}}\left(x_1\right) = \sgn{\widetilde{\epsilon}} \partial_{x_1} w_{3,\mathrm{r}}(0), \qquad \lim_{x_1 \rightarrow \infty} w_{3,\mathrm{r}}\left(x_1\right) = 0.
\end{equation*}

To solve \eqref{eigenvalue_problem_numerics_2} numerically for a fixed $k \in \R$ we discretize the problem over a finite interval $[-d,d] \subset \R$ and apply a solver for a generalized eigenvalue problem, e.g. a solver based on a Krylov-Schur algorithm, see e.g. \cite{stewart2001krylov}.

To be more precise, we used $d$ ranging from $10^{2}$ to $10^{4}$ with the step size $h = 0.01$ in space. We used the second-order difference quotients with zero Dirichlet boundary conditions to discretize the derivatives. The generalized eigenvalue problem was then solved with the Matlab functions ``eigs'', where we calculated the first $10$ eigenvalues closest to $\nu_0$ with a convergence tolerance of $10^{-10}$. We then only selected solutions where the corresponding eigenfunctions were almost zero in a small neighborhood of the boundary of $[-d,d]$, i.e., the norm of $w_3$ on $[-d,-d + 100h] \cup [d- 100h,d]$ is smaller than $10^{-6}$.

\section{Residual of order $\bs{\eps^4}$}
\label{S:resiudal_order_4}
For $\Res := \Res(\Uext)$ and $F_1 = \e^{\ie\left(k_0x_2 - \nu_0t \right)}$ we have 
\begin{align*}
	&\operatorname{Res}_3 = F_1 \varepsilon^4 \left(\frac{1}{2} \partial_{X_2}^3 A \partial_k^2 w_1\left(k_0\right) - \left(2|A|^2 \partial_{X_2} A + A^2 \partial_{X_2} \overline{A}\right) p_1 \right) \\
	&\!\quad \!+ F_1 \varepsilon^4 \upmu_0 \left(\frac{\nu_1}{2} \partial_{X_2}^3 A \partial_k^2 w_3\left(k_0\right) - \ie \partial_T \partial_{X_2} A \partial_k w_3\left(k_0\right) - \nu_1 \left(2|A|^2 \partial_{X_2} A + A^2 \partial_{X_2} \overline{A}\right) p_3\right)\\
	&\!\quad \!- 3 F_1^3 \varepsilon^4 \left( \partial_{X_2} A A^2 h_1 + \upmu_0 \nu_1 A^2 \partial_{X_2} A h_3 \right) + \cc + \mathcal{O}\left(\eps^5 \right)
\end{align*}
and the parts of $\operatorname{Res}_1$ and $\operatorname{Res}_2$ that are linear in $\bs{U}_{\mathrm{ext}}$ are given by
\begin{align*}
	&\operatorname{Res}_{\mathrm{lin},1} = F_1 \varepsilon^4 \left(\frac{1}{2} \partial_{X_2}^3 A \partial_k^2 w_3\left(k_0\right) - \left(2|A|^2 \partial_{X_2} A + A^2 \partial_{X_2} \overline{A}\right) p_3 \right) \\
	&\!\quad \! + F_1 \varepsilon^4 \epsilon_1 \left(\frac{\nu_1}{2} \partial_{X_2}^3 A \partial_k^2 w_1\left(k_0\right) - \ie \partial_T \partial_{X_2} A \partial_k w_1\left(k_0\right) - \nu_1 \left(2|A|^2 \partial_{X_2} A + A^2 \partial_{X_2} \overline{A}\right) p_1\right)\\
	&\!\quad \! - 3F_1^3 \varepsilon^4 \left( \epsilon_1 \nu_1 A^2 \partial_{X_2} A h_1 + A^2 \partial_{X_2} A h_3\right) + \cc + \mathcal{O}\left(\eps^5 \right), \\%%%%%%%%%%%%%%%%%%%%%%%%%
	&\operatorname{Res}_{\mathrm{lin},2} = F_1 \varepsilon^4 \epsilon_1 \left(\frac{\nu_1}{2} \partial_{X_2}^3 A \partial_k^2 w_2\left(k_0\right) - \ie \partial_T \partial_{X_2} A \partial_k w_2\left(k_0\right)\right)\\
	&\!\quad \! - F_1 \varepsilon^4 \epsilon_1 \nu_1 \left(2|A|^2 \partial_{X_2} A + A^2 \partial_{X_2} \overline{A}\right) p_2 - 3F_1^3 \varepsilon^4 \epsilon_1 \nu_1 A^2 \partial_{X_2} A h_2 
	+ \cc + \mathcal{O}\left(\eps^5 \right).
\end{align*}
For the nonlinear part of $\operatorname{Res}_1$ we get
\begin{align*}
	\operatorname{Res}_{\mathrm{nl},1} = -\varepsilon^4 \epsilon_3 &\Big(3 \nu_1 F_1^3 A^2 \partial_{X_2} A \left(m_1^3 + m_1 m_2^2\right) \\
	& + \left. \nu_0 F_1^3 A^2 \partial_{X_2} A \left(3 m_1^2 \partial_k w_1\left(k_0\right) + m_2^2 \partial_k w_1\left(k_0\right) + 2 m_1 m_2 \partial_k w_2\left(k_0\right) \right) \right.\\
	& + \left. \nu_1 F_1 A^2 \partial_{X_2} \overline{A} \left(3 |m_1|^2 m_1 + 2 m_1 |m_2|^2 + \overline{m}_1 m_2^2\right) \right. \\
	& + \left. \nu_0 F_1 A^2 \partial_{X_2} \overline{A} \left(3 m_1^2 \partial_k \overline{w}_1\left(k_0\right) + m_2^2 \partial_k \overline{w}_1\left(k_0\right) + 2 m_1 m_2 \partial_k \overline{w}_2\left(k_0\right) \right) \right.\\
	& + \left. 2 \nu_0 F_1 |A|^2 \partial_{X_2} A \left(3 |m_1|^2 \partial_k w_1\left(k_0\right) + |m_2|^2 \partial_k w_1\left(k_0\right) \right)\right.\\
	& + \left. 2 \nu_0 F_1 |A|^2 \partial_{X_2} A \left(\overline{m}_1 m_2 \partial_k w_2\left(k_0\right) + m_1 m_2 \partial_k w_2\left(k_0\right) \right)\right.\\
	& + 2 \nu_1 F_1 |A|^2 \partial_{X_2} A \left(3 |m_1|^2 m_1 + 2 m_1 |m_2|^2 + \overline{m}_1 m_2^2 \right)\!\Big) + \cc + \mathcal{O}\left(\varepsilon^5\right),
\end{align*}
and for $\operatorname{Res}_{\mathrm{nl},2}$ we simply have to change the indices of the components of $\bs{m}$ and $\partial_k \bs{w}\left(k_0\right)$ in $\operatorname{Res}_{\mathrm{nl},1}$.

\section{Calculus Lemma}\label{A:products}
\begin{lem}	\label{L:products}~
	Let $m_1,m_2 \in \N_0$ with $m_1 \geq m_2$ and $m_1 \geq 2$ and let $J\subset \R$ be an interval.
	\begin{enumerate}[label=(\roman*)]
		\item Let $j \in \{0,\dots,m_1\}$, $f \in \mathcal{H}^{m_1-j}(\R^2)$ and $g \in \mathcal{H}^j(\R^2)$. Then $fg \in L^2(\R^2)$ and
		\begin{equation*}
			\norm{fg}_{L^2(\R^2)} \leq C \norm{f}_{\mathcal{H}^{m_1-j}(\R^2)}\norm{g}_{\mathcal{H}^j(\R^2)}. 
		\end{equation*}
		\item Let $f \in \mathcal{H}^{m_1}(\R^2)$ and $g \in \mathcal{H}^{m_2}(\R^2)$. Then $fg \in \mathcal{H}^{m_2}(\R^2)$ and
		\begin{equation*}
			\norm{fg}_{\mathcal{H}^{m_2}(\R^2)} \leq C \norm{f}_{\mathcal{H}^{m_1}(\R^2)}\norm{g}_{\mathcal{H}^{m_2}(\R^2)}. 
		\end{equation*}
		\item Let $f \in \mathcal{F}_0^{m_1,1}(\R^2)$ and $g \in \mathcal{H}^{m_2}(\R^2)$. Then $fg \in \mathcal{H}^{m_2}(\R^2)$ and
		\begin{equation*}
			\norm{fg}_{\mathcal{H}^{m_2}(\R^2)} \leq C \norm{f}_{\mathcal{F}_0^{m_1,1}(\R^2)}\norm{g}_{\mathcal{H}^{m_2}(\R^2)}.
		\end{equation*}
		\item Let $f \in \mathcal{F}^{m_1,1}(\R^2 \times J)$ and $g \in \mathcal{G}^{m_2}(\R^2 \times J)$. Then $fg \in \mathcal{G}^{m_2}(\R^2 \times J)$ and
		\begin{equation*}
			\norm{fg}_{\mathcal{G}^{m_2}(\R^2 \times J)} \leq C \norm{f}_{\mathcal{F}^{m_1,1}(\R^2 \times J)}\norm{g}_{\mathcal{G}^{m_2}(\R^2 \times J)}.
		\end{equation*}
		\item Let $f \in \mathcal{F}_0^{m_1,1}(\R^2)$ and $g \in \mathcal{F}_0^{m_2,1}(\R^2)$. Then $fg \in \mathcal{F}_0^{m_2,1}(\R^2)$ and
		\begin{equation*}
			\norm{fg}_{\mathcal{F}_0^{m_2,1}(\R^2)} \leq C \norm{f}_{\mathcal{F}_0^{m_1,1}(\R^2)}\norm{g}_{\mathcal{F}_0^{m_2,1}(\R^2)}. 
		\end{equation*}
	\end{enumerate}
\end{lem}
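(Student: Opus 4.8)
The plan is to reduce every assertion to a single bilinear $L^2$-estimate on a half-space and then to bootstrap by the Leibniz rule. Since $u\in\mathcal{H}^{m}(\R^2)$ (resp.\ $\mathcal{F}_0^{m,n}(\R^2)$, $\mathcal{F}^{m,n}(\R^2\times J)$, $\mathcal{G}^{m}(\R^2\times J)$) exactly when $u^\pm$ lies in the corresponding space over $\R^2_\pm$ with the norm being the sum (or max) of the two half-space norms, and since $(fg)^\pm=f^\pm g^\pm$, it suffices to prove each statement with $\R^2$ replaced by $\R^2_+$ (the case $\R^2_-$ being identical). On the Lipschitz domain $\R^2_+$ I would use the Sobolev embeddings $H^1(\R^2_+)\hookrightarrow L^p(\R^2_+)$ for all $p\in[1,\infty)$ and $H^2(\R^2_+)\hookrightarrow L^\infty(\R^2_+)$, that $W^{k,\infty}(\R^2_+)$ is a Banach algebra, and the \emph{core estimate}
\begin{equation*}
\|fg\|_{L^2(\R^2_+)}\le C\,\|f\|_{H^a(\R^2_+)}\,\|g\|_{H^b(\R^2_+)},\qquad a,b\in\N_0,\ a+b\ge2.
\end{equation*}
This holds because $a\ge2$ gives $f\in L^\infty$ and $\|fg\|_{L^2}\le\|f\|_{L^\infty}\|g\|_{L^2}$, symmetrically for $b\ge2$, and the remaining case $a=b=1$ follows from H\"older's inequality with $H^1\hookrightarrow L^4$. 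Part (i) is precisely this estimate with $(a,b)=(m_1-j,j)$: the hypotheses $0\le j\le m_1$ and $m_1\ge2$ give $a,b\ge0$ and $a+b=m_1\ge2$, the latter ruling out the forbidden pair $\{a,b\}=\{1,0\}$ (for which $fg$ need not be $L^2$).

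For parts (ii) and (iii) I would expand $\partial^{\bs{\gamma}}(fg)=\sum_{\bs{\delta}\le\bs{\gamma}}\binom{\bs{\gamma}}{\bs{\delta}}\partial^{\bs{\delta}}f\,\partial^{\bs{\gamma}-\bs{\delta}}g$ for $|\bs{\gamma}|\le m_2$ and apply the core estimate termwise. In (ii) one takes $a=m_1-|\bs{\delta}|$ and $b=m_2-|\bs{\gamma}|+|\bs{\delta}|$, so $a\ge m_1-m_2\ge0$, $b\ge0$, and $a+b=m_1+m_2-|\bs{\gamma}|\ge m_1\ge2$; summing gives $\|\partial^{\bs{\gamma}}(fg)\|_{L^2}\le C\|f\|_{\mathcal{H}^{m_1}}\|g\|_{\mathcal{H}^{m_2}}$, and $fg\in L^2(\R^2)$ is clear since $f\in L^\infty(\R^2)$. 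In (iii) the only change is that for $|\bs{\delta}|\ge1$ one uses the defining decomposition $\partial^{\bs{\delta}}f=B_{\bs{\delta}}+C_{\bs{\delta}}$ with $B_{\bs{\delta}}\in H^{m_1-|\bs{\delta}|}$ and $C_{\bs{\delta}}\in W^{m_1-|\bs{\delta}|,\infty}\subset L^\infty$: the $C_{\bs{\delta}}$-contribution is bounded by $\|C_{\bs{\delta}}\|_{L^\infty}\|\partial^{\bs{\gamma}-\bs{\delta}}g\|_{L^2}$, the $B_{\bs{\delta}}$-contribution by the core estimate, and the $\bs{\delta}=\bs{0}$ term by $\|f\|_{L^\infty}\|\partial^{\bs{\gamma}}g\|_{L^2}$.

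Part (iv) is the space-time version of (iii): fixing $t$, applying Leibniz in all variables to $\partial^{\bs{\beta}}(fg)$ with $|\bs{\beta}|\le m_2$ (at most $j$ of them time derivatives when one targets the $\mathcal{H}^{m_2-j}$-component of the $\mathcal{G}^{m_2}$-norm), splitting the $f$-factor into its $L^\infty(J,H^{\cdot})$- and $W^{\cdot,\infty}$-summands, and using $g\in L^\infty(J,\mathcal{H}^{\cdot})$; the orders add up to at least $m_1\ge2$ as before, so the spatial $L^2$-norm of each term at time $t$ is $\le C\|f\|_{\mathcal{F}^{m_1,1}}\|g\|_{\mathcal{G}^{m_2}}$ by the core estimate, and taking $\sup_{t\in J}$ gives the bound; the continuity assertion $fg\in C^{j}(\overline J,\mathcal{H}^{m_2-j})$ follows from the continuity of the relevant derivatives of $f,g$ into the respective spaces and boundedness of the multiplication map, by a routine limiting argument as in the cited references. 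Finally I would prove (v) by induction on $m_2$: the base $m_2=0$ is $\|fg\|_{L^\infty}\le\|f\|_{L^\infty}\|g\|_{L^\infty}$; for $1\le|\bs{\alpha}|\le m_2$ one expands $\partial^{\bs{\alpha}}(fg)$, splits each of $\partial^{\bs{\delta}}f$ and $\partial^{\bs{\alpha}-\bs{\delta}}g$ into $H$- and $W^\infty$-pieces, and checks that the four products lie in $H^{m_2-|\bs{\alpha}|}+W^{m_2-|\bs{\alpha}|,\infty}$ — the $H\times H$ piece via the core estimate, the $W^\infty\times W^\infty$ piece via the algebra property of $W^{k,\infty}$, and the mixed $H^a\times W^{b,\infty}$ pieces via (iii) at the lower order $m_2-|\bs{\alpha}|$ (covered by the induction hypothesis), where $m_1\ge m_2$ guarantees $a=m_1-|\bs{\delta}|\ge m_2-|\bs{\alpha}|$.

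I expect the only real work to be the combinatorial bookkeeping: for every Leibniz term one must verify that the orders of the two factors add up to at least $2$ (so the core estimate applies) and that every $W^{\cdot,\infty}$-piece carries enough derivatives — both of which reduce to the standing hypotheses $m_1\ge m_2$ and $m_1\ge2$. Part (v), in which both factors possess the mixed $H+W^\infty$ structure, is the most delicate point, and organising its proof as an induction on $m_2$ (equivalently, on $|\bs{\alpha}|$) is what keeps the cross terms under control.
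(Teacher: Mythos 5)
The paper's ``proof'' of this lemma is a single line: it refers to Lemma~2.22 in the thesis \texttt{[spitz2017local]} and adds nothing else. Your proof is therefore a genuine self-contained argument, and its architecture --- reduce to half-spaces, establish the bilinear core estimate $\|fg\|_{L^2}\lesssim\|f\|_{H^a}\|g\|_{H^b}$ for $a+b\ge 2$ via the 2D embeddings $H^2\hookrightarrow L^\infty$ and $H^1\hookrightarrow L^4$, then bootstrap by Leibniz while separating $H$- and $W^{\cdot,\infty}$-pieces --- is exactly the standard machinery that underlies the cited result. The core estimate and parts (i)--(iv) are correct as you wrote them (one tiny overstatement: $H^1(\R^2_+)\hookrightarrow L^p$ only for $p\in[2,\infty)$, not $p\in[1,\infty)$, but you only use $p\in\{2,4\}$ so this is harmless).

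There is, however, a real gap in part (v). You write that for $1\le|\bs{\alpha}|\le m_2$ one ``splits each of $\partial^{\bs{\delta}}f$ and $\partial^{\bs{\alpha}-\bs{\delta}}g$ into $H$- and $W^\infty$-pieces'' and then treats the four products. This step is not available for $\bs{\delta}=\bs{0}$ (and, symmetrically, for $\bs{\delta}=\bs{\alpha}$). By the definition of $\mathcal{F}_0^{m,1}$ used in the paper, only the derivatives $\partial^{\bs{\beta}}f$ with $1\le|\bs{\beta}|\le m_1$ are required to lie in $H^{m_1-|\bs{\beta}|}+W^{m_1-|\bs{\beta}|,\infty}$; the function $f$ itself is only assumed to be $L^\infty$, and the inclusion $\mathcal{F}_0^{m_1,1}\subset H^{m_1}+W^{m_1,\infty}$ is not automatic (the pointwise decompositions of the individual $\partial^{\bs{\beta}}f$ need not be compatible across different $\bs{\beta}$, and the obvious cut-off construction fails because the $L^2$-piece of $\partial f$ living near infinity is not $L^\infty$). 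Consequently, in the Leibniz term $f\,\partial^{\bs{\alpha}}g = f(B^g_{\bs{\alpha}}+C^g_{\bs{\alpha}})$, the product $f\,B^g_{\bs{\alpha}}$ is handled by part (iii), but the product $f\,C^g_{\bs{\alpha}}$ with $f\in\mathcal{F}_0^{m_1,1}$ and $C^g_{\bs{\alpha}}\in W^{m_2-|\bs{\alpha}|,\infty}$ needs a separate argument: what your outline yields here is only $f\,C^g_{\bs{\alpha}}\in\mathcal{F}_0^{m_2-|\bs{\alpha}|,1}$, while what must be exhibited is a single decomposition into an $H^{m_2-|\bs{\alpha}|}$- and a $W^{m_2-|\bs{\alpha}|,\infty}$-part. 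The same issue arises for $g\,C^f_{\bs{\alpha}}$. Relatedly, your parenthetical ``via (iii) at the lower order \ldots (covered by the induction hypothesis)'' conflates two different things: (iii) is a separate, already-proved assertion and can indeed be invoked at lower $m_2$, whereas the induction hypothesis concerns (v); neither of these, as stated, closes the gap for the $\bs{\delta}\in\{\bs{0},\bs{\alpha}\}$ terms. Everything else --- the boundary/interior Leibniz terms with $0<\bs{\delta}<\bs{\alpha}$, the $H\!\times\!H$, $H\!\times\!W^\infty$, $W^\infty\!\times\!W^\infty$ products, and the order-counting via $m_1\ge m_2$ and $m_1\ge 2$ --- is correct.
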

\begin{proof}
	The proofs can be done analogously to Lemma 2.22 in \cite{spitz2017local}. 
\end{proof}

\section*{Acknowledgements}
The authors thank the referees for their comments and suggestions which led to an improvement of the presentation. Tom\'{a}\v{s} Dohnal and Daniel P. Tietz acknowledge funding by the Deutsche Forschungsgemeinschaft (DFG, German Research Foundation), Project-ID DO1467/4-1. Roland Schnaubelt acknowledges funding by the Deutsche Forschungsgemeinschaft (DFG, German Research Foundation) - Project-ID 258734477 - SFB 1173.

\bibliographystyle{plain}
\bibliography{references}
%\bibliography{ref-pulse} %RS
\end{document}